\renewcommand{\baselinestretch}{1.25}
 \newtheorem{thm}{Theorem}[section]
 \newtheorem{cor}[thm]{Corollary}
 \newtheorem{lem}[thm]{Lemma}
 \newtheorem{prop}[thm]{Proposition}
 \theoremstyle{definition}
 \newtheorem{defn}[thm]{Definition}
 \newtheorem{exmp}[thm]{Example}
 \newtheorem{rem}[thm]{Remark}
\numberwithin{equation}{section}
\newcommand{\Nat}{\mathbb{N}}
\newcommand{\set}[1]{\left\{#1\right\}}
\newcommand{\Set}[2]{\set{#1\ \vert\ #2}}
\newcommand{\Map}[3]{#1\, :\, #2\longrightarrow #3}
\date{}
\begin{document}

\title{On certain classes of graceful lobsters}
\author{Shamik Ghosh\thanks{Department of Mathematics, Jadavpur University, Kolkata-700032, India. sghosh@math.jdvu.ac.in}}

\maketitle

\renewcommand{\baselinestretch}{1}
\begin{abstract}
A (simple undirected) graph $G=(V,E)$ with $m$ edges is graceful if it has a distinct vertex labeling $\Map{f}{V}{\set{0,1,2,3,\ldots,m}}$\ \ which induces a set of distinct edge labels\ \ $\Set{|f(u)-f(v)|}{uv\in E,\ u,v\in V}$. The famous Ringel-Kotzig conjecture \cite{Kot,Rosa} is that all trees are graceful. The base of a tree $T$ is  obtained from $T$ by deleting its one-degree vertices. A caterpillar is a tree whose base is a path and a lobster is a tree whose base is a caterpillar. Paths and caterpillars are known to be graceful. Next it was conjectured by Bermond \cite{Ber} that all lobsters are graceful. In this paper we describe various methods of joining graceful graphs and $\alpha$-labeled graphs using the adjacency matrix characterization that initiated by Bloom \cite{Bloom} and others. We apply these results to obtain some classes of graceful lobsters and indicate how to obtain some others.

\vspace{0.5em}\noindent
{\footnotesize {\bf Keywords:}\ \ Graceful graph, tree, caterpillar, lobster, bipartite graph, graph labeling, adjacency matrix.}

\noindent{\footnotesize {\bf 2010 Mathematics Subject Classification:}\ 05C78}
\end{abstract}

\renewcommand{\baselinestretch}{1.5}



\section{Introduction}
	
A graph labeling is an assignment of integers to the vertices or edges, or both, subject to certain conditions. Let $G=(V,E)$ be a simple undirected graph with $n$ vertices and $m$ edges. In 1967, Rosa \cite{Rosa} called a map $\Map{f}{V(G)}{\set{0,1,2,\ldots ,m}}$ a $\beta$-\emph{labeling} (or, $\beta$-\emph{valuation}) of $G$ if $f$ is injective and when each edge $uv$ is assigned the label $|f(u)-f(v)|$, the resulting edge labels are distinct. Later on Golomb \cite{Gol} called such labeling {\em graceful}. If in addition to that there exists $k\in\set{0,1,2,\ldots ,m}$ such that for every edge $uv\in E$, either $f(u)\leqslant k<f(v)$ or $f(v)\leqslant k<f(u)$, then $G$ is said to have an $\alpha$-\emph{labeling} (or, $\alpha$-\emph{valuation}) $f$. Let us call the number $k$, a {\em critical number} of the $\alpha$-labeling $f$. An $\alpha$-labeling $f$ is called {\em complete} \cite{Kot} if $f$ is bijective. In the sequel, by a graph we mean a simple undirected graph. A graph $G$ is a {\em graceful graph} if it has a graceful labeling. The famous Ringel-Kotzig conjecture \cite{Kot,Ring,Rosa} is that {\em all trees are graceful}. A lot of research papers were published on graceful graphs and their other variations \cite{BD,HKR,Kot}. Many special classes of trees are known to be graceful. A comprehensive survey and references of graceful graphs and their variations can be found in \cite{Gal}. Another survey and the adjacency matrix characterization of graceful graphs are available in \cite{Bloom}.

\vspace{1em} 
The {\em base} of a tree $T$ is a tree which is obtained from $T$ by deleting its pendant (one-degree) vertices. A {\em caterpillar} is a tree whose base is a path and a {\em lobster} is a tree whose base is a caterpillar. Paths and caterpillars are known to be graceful \cite{Rosa}. In 1979 Bermond \cite{Ber} conjectured that all lobsters are graceful. Several special classes of lobsters are shown to be graceful \cite{MP,DM,Ng,SJ,WJL}.

\vspace{1em}
In this paper, we first describe adjacency matrices of graceful graphs and graphs with $\alpha$-labeling. Using these matrix representations we obtain several methods of joining graceful graphs and graphs with $\alpha$-labeling. Next we define three special classes of lobsters, namely, pairwise similar, pairwise linked and pairwise balanced (cf. Definitions \ref{lobd1}, \ref{defgr3}, \ref{lobpb2}). We show that pairwise linked lobsters are graceful, pairwise similar lobsters are graceful under certain conditions and pairwise balanced lobsters are $\alpha$-labeled and hence graceful. Finally we present an illustration which indicates how to obtain many other graceful lobsters which do not belong to the above classes.

\section{Adjacency matrices of graceful graphs}

Let $G=(V,E)$ be a graph with $n$ vertices and $m$ edges. Then by the definition of graceful graph, it is clear that $G$ can never be graceful if $n>m+1$ as in this case there is no injective map $\Map{f}{V}{\set{0,1,2,\ldots ,m}}$. Thus if $G$ is graceful, then $m\geqslant n-1$. Note that this condition is always satisfied if $G$ is connected. Now let $n\leqslant m+1$. We include $m+1-n$ isolated vertices in $G$ and the graph thus obtained is denoted by $\widehat{G}$. Clearly, the graph $G$ is graceful if and only if $\widehat{G}$ is also so. A graceful graph $G$ is a {\em completely graceful graph} if there is a bijective  $\beta$-labeling of $G$, i.e., if $n=m+1$ (or equivalently, if $G=\widehat{G}$).\footnote{In \cite{Bloom}, Bloom called the graph $G$ {\em fully augmented} if $G=\widehat{G}$.}  Note that any graceful tree is a completely graceful graph and every connected completely graceful graph is a tree.

\begin{defn}\label{d1}
Let $A$ be an $m\times n$ matrix. The {\em box-value} of the position $(i,j)$ of $A$ is $m+j-i$. A {\em diagonal} of $A$ is a set of positions of $A$ with the same box-value, i.e., for each $c=1,2,\ldots ,m+n-1$, the set $d_c=\Set{(i,j)}{m+j-i=c}$ is a  diagonal. An $m\times n$ binary matrix (i.e., a $0$-$1$ matrix) $A$ is said to be {\em graceful} if every diagonal of $A$ contains at most one $1$ and $A$ is said to be {\em completely graceful} if every diagonal of $A$ contains exactly one $1$ (except the principal diagonal of an adjacency matrix of a graph where all the entires are $0$). 
\end{defn}

\begin{thm}\label{t0}\cite{Bloom,HW}
Let $G$ be a graph with $n$ vertices and $m$ edges such that $n=m+1$. Then $G$ is completely graceful if and only if the vertices of $G$ can be arranged in such a way that its adjacency matrix becomes completely graceful.
\end{thm}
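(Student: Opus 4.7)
My plan is to show that the theorem is essentially a dictionary between two equivalent descriptions of the same structure: a bijective graceful labeling of $G$ corresponds exactly to an ordering of the $n$ vertices that makes the adjacency matrix completely graceful in the sense of Definition \ref{d1}. The bridge is simple: given an ordering $v_1, v_2, \ldots, v_n$, define $f(v_i) = i - 1$; since $n = m+1$, this is automatically a bijection from $V$ onto $\{0, 1, \ldots, m\}$.

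For the forward direction, I would take a bijective graceful labeling $f$ and re-index the vertices $v_1, v_2, \ldots, v_n$ so that $f(v_i) = i - 1$. With the adjacency matrix $A$ written in this order, an edge $v_iv_j$ with $i < j$ places $1$s at the symmetric positions $(i,j)$ and $(j,i)$, which lie on the diagonals of box-values $n + (j - i)$ and $n - (j - i)$. Gracefulness says that the quantities $j - i$ arising from the $m$ edges are precisely $1, 2, \ldots, n - 1$, each occurring once; hence each diagonal with box-value in $\{n+1, \ldots, 2n-1\}$ contains exactly one $1$, and by symmetry the same holds for box-values in $\{1, \ldots, n-1\}$. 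The principal diagonal ($c = n$) is all zero because $G$ has no loops, so $A$ is completely graceful.

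The reverse direction runs the same argument backwards. Starting from an ordering that makes $A$ completely graceful, I define $f(v_i) = i - 1$, note that $f$ is bijective, and observe that the unique $1$ in each upper-triangle diagonal of box-value $n + k$ (for $k = 1, \ldots, n-1$) yields exactly one edge with induced label $k$. Thus the $m$ edge labels are precisely $\{1, 2, \ldots, m\}$, and $f$ is a bijective graceful labeling, so $G$ is completely graceful.

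There is no real combinatorial obstacle here; the only thing requiring care is notational alignment, namely instantiating the ``$m \times n$ matrix'' convention of Definition \ref{d1} at the square $n \times n$ adjacency matrix (so the box-value formula uses the row count, which happens to equal the column count) and exploiting the symmetry of $A$ so that verifying the condition on upper-triangle diagonals automatically handles the lower-triangle ones. Once this bookkeeping is in place, the equivalence is immediate from the definitions.
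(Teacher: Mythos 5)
Your argument is correct and is exactly the standard dictionary between bijective graceful labelings and completely graceful adjacency matrices; the paper itself gives no proof of this theorem, citing Bloom and Haggard--McWha, and your proof is the one those sources (and the paper's Remark \ref{rem1} on canonical adjacency matrices) have in mind. The bookkeeping you flag --- the box-value $n+j-i$ for the square $n\times n$ case, the symmetry pairing the diagonals $n+k$ and $n-k$, and the count $m=n-1$ forcing the edge labels to be precisely $1,\dots,m$ --- is handled correctly.
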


The following theorem is an immediate generalization of the above result. The proof is omitted as it is similar to that of Theorem \ref{t0}.

\begin{thm}\label{t1}
Let $G$ be a graph with $n$ vertices and $m$ edges. Then $G$ is graceful if and only if $n\leqslant m+1$ and the vertices of $\widehat{G}$ can be arranged in such a way that its adjacency matrix becomes graceful.
\end{thm}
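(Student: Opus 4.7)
The plan is to reduce the statement to Theorem \ref{t0} by exploiting the observation, already explicit in the excerpt, that $G$ is graceful iff $\widehat{G}$ is graceful, and that $\widehat{G}$ has $m+1$ vertices together with $m$ edges. The forward implication in Theorem \ref{t1} begins by noting that $n \leqslant m+1$ is forced (otherwise no injection $V \to \{0,1,\ldots,m\}$ exists, as remarked before Definition \ref{d1}), so $\widehat{G}$ is well defined and in fact is a completely graceful graph by the definition just above Definition \ref{d1}. Then Theorem \ref{t0} applied to $\widehat{G}$ produces an ordering of $V(\widehat{G})$ whose adjacency matrix is completely graceful, and every completely graceful matrix is, a fortiori, graceful.

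For the converse direction I would like to show that a graceful adjacency matrix of $\widehat{G}$ is automatically completely graceful, and then invoke Theorem \ref{t0} again. The counting argument is short: $\widehat{G}$ has $m$ edges, so its $(m+1)\times(m+1)$ symmetric adjacency matrix $A$ contains exactly $2m$ ones, all off the principal diagonal; on the other hand the number of non-principal diagonals (those with box-value $\neq m+1$) is $(2(m+1)-1)-1 = 2m$. Thus if every diagonal of $A$ contains at most one $1$, every non-principal diagonal must contain exactly one $1$, so $A$ is completely graceful. Theorem \ref{t0} then supplies a bijective $\beta$-labeling of $\widehat{G}$, which restricts to a graceful labeling of $G$.

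For completeness I would also note how the ordering–labeling correspondence works in concrete terms, since this is really the content that is being borrowed from Theorem \ref{t0}: if $v_1,\ldots,v_{m+1}$ is an ordering of $V(\widehat{G})$ and we assign $f(v_i)=i-1$, then an edge $v_i v_j$ with $i<j$ produces entries $A_{ij}=A_{ji}=1$ sitting on diagonals of box-values $(m+1)+(j-i)$ and $(m+1)-(j-i)$, and carries edge label $j-i$. Hence the edge labels are pairwise distinct exactly when the positions of the $1$'s in $A$ lie on pairwise distinct non-principal diagonals, which is the gracefulness of $A$.

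I do not expect any real obstacle: the only substantive ingredient beyond Theorem \ref{t0} is the bookkeeping observation that, for a graph with $n=m+1$, the notions of graceful matrix and completely graceful matrix coincide, and this follows immediately from the diagonal count above. The minor care required is to keep the indexing consistent (treating the adjacency matrix as $(m+1)\times(m+1)$ so that the principal diagonal has box-value $m+1$) and to remember that isolated vertices of $\widehat{G}$ simply occupy whatever labels in $\{0,1,\ldots,m\}$ are not already used by $f$ on $V(G)$.
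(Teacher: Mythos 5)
Your argument is correct. The paper itself omits the proof, remarking only that it is ``similar to that of Theorem \ref{t0}'' --- i.e.\ the intended argument is to redo the labeling--diagonal correspondence directly for $\widehat{G}$, which is exactly the bookkeeping you spell out in your third paragraph. What you do differently is package the whole thing as a formal reduction to Theorem \ref{t0}, and the one genuinely new ingredient that makes this work is your counting observation: the $(m+1)\times(m+1)$ adjacency matrix of $\widehat{G}$ has exactly $2m$ off-principal ones and exactly $2m$ non-principal diagonals, so ``graceful'' and ``completely graceful'' coincide for such a matrix. This is a clean and correct step (and it also justifies the phrase ``immediate generalization'' in the paper: the forward direction is just the remark that $G$ is graceful iff $\widehat{G}$ is, plus the fact that $\widehat{G}$ is fully augmented). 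The trade-off is minor: your route lets you quote Theorem \ref{t0} as a black box at the cost of the small counting lemma, whereas the paper's intended route reproves the correspondence from scratch; both hinge on the same identification of edge labels with box-values, which you state correctly, including the check that a single edge cannot place two ones on one diagonal since its two entries lie on diagonals $m+1+d$ and $m+1-d$ with $d\neq 0$.
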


\begin{cor}\label{c1}
A tree $T$ is graceful if and only if the vertices of $T$ can be arranged in such a way that its adjacency matrix is completely graceful.
\end{cor}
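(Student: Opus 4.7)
The plan is to obtain this corollary as a direct specialization of Theorem \ref{t0}, since for a tree the vertex and edge counts are forced into the critical relationship $n = m+1$ that is the hypothesis of that theorem.

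First I would recall that a tree $T$ with $n$ vertices has exactly $n-1$ edges, so $m = n-1$ and hence $n = m+1$. This observation has two consequences. First, no isolated vertices need be adjoined in forming $\widehat{T}$, so $\widehat{T} = T$ already. Second, any graceful labeling $\Map{f}{V(T)}{\set{0,1,\ldots,m}}$ is automatically a bijection, because its domain and codomain have the same cardinality $n = m+1$ and $f$ is injective by definition of gracefulness. Thus every graceful tree is a completely graceful graph in the sense introduced before Definition \ref{d1}.

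With these observations in hand, the two directions of the corollary are immediate. If $T$ is graceful, then (by the preceding remark) it is completely graceful, and Theorem \ref{t0} furnishes a vertex ordering under which the adjacency matrix is completely graceful. Conversely, if the vertices of $T$ admit such an ordering, then Theorem \ref{t0} applied in the opposite direction yields that $T$ is completely graceful, hence in particular graceful.

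I do not anticipate any substantive obstacle: the corollary is essentially a rebranding of Theorem \ref{t0} in the special case of trees. The single point meriting explicit mention is the automatic equivalence of gracefulness and complete gracefulness for trees, which is a direct consequence of the equality $n = m+1$ observed above. One could equally well deduce the corollary from Theorem \ref{t1} by noting that when $\widehat{T} = T$ and every off-principal diagonal of the adjacency matrix can contain at most one $1$, the $m$ pairs of symmetric off-principal entries induced by the $m$ edges must in fact fill every pair of symmetric off-principal diagonals, thereby making the matrix completely graceful; but the appeal to Theorem \ref{t0} is cleaner.
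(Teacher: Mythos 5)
Your proposal is correct and matches the paper's intended derivation: the paper gives no separate proof of Corollary \ref{c1}, treating it as immediate from Theorem \ref{t0} (equivalently Theorem \ref{t1}) together with the observation, already made in Section 2, that a tree satisfies $n=m+1$ and hence any graceful tree is completely graceful with $\widehat{T}=T$. Your explicit justification of that observation and the two-directional appeal to Theorem \ref{t0} is exactly the argument the paper leaves to the reader.
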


\begin{rem}\label{rem1}
It is important to note that for a graceful graph $G$, a graceful adjacency matrix, say, $A$ of $\widehat{G}$ is obtained by arranging the vertices of $\widehat{G}$ according to the increasing order of their labels in a graceful labeling of $\widehat{G}$. With this arrangement $A$ is known as a {\em canonical adjacency matrix} \cite{Bloom} of $\widehat{G}$ (or, of $G$ when $G$ is completely graceful). For example, the matrix $A$ in Example \ref{e1} is a canonical adjacency matrix of the graceful tree $G$ in Figure \ref{fig1}(left).
\end{rem}

Now let $G=(V,E)$ be a graph with an $\alpha$-labeling $f$ and a critical number $k$. It is known that $G$ is bipartite \cite{Kot}. In fact, if $V_1=\Set{v\in V}{f(v)\leqslant k}$ and $V_2=\Set{v\in V}{f(v)>k}$, then since $f$ is an $\alpha$-labeling, $V_1$ and $V_2$ forms a bipartition of $V$ in $G$. For a bipartite graph $B=(X,Y,E)$ with partite sets $X$ and $Y$, the submatrix $A$ of the adjacency matrix of $B$ containing rows corresponding to the vertices of $X$ and columns corresponding to the vertices of $Y$ is known as the {\em biadjacency matrix} of $B$ (see Figure \ref{f1}). Note that a graph $G$ is bipartite if and only if $\widehat{G}$ is also bipartite.

\vspace{-0.5em}
\begin{figure}[h]
$$\begin{array}{c|c|c|c}
\multicolumn{1}{c}{} & \multicolumn{1}{c}{X} & \multicolumn{2}{c}{Y\ }\\ 
\cline{2-3}

X & \mathbf{0} & A &\\ 
 \cline{2-3}

Y & A^T & \mathbf{0} &\\
\cline{2-3}
\end{array}$$
\caption{The biadjacency matrix $A$ in the adjacency matrix of a bipartite graph $B=(X,Y,E)$}\label{f1}
\end{figure}

\begin{thm}\label{t2}
Let $G$ be a graph with $n$ vertices and $m$ edges. Then $G$ has an $\alpha$-labeling [a complete $\alpha$-labeling] with a critical number $k$ if and only if $n\leqslant m+1$ [respectively, $n=m+1$], $G$ is bipartite and there is a bipartition of $\widehat{G}$ where the vertices of $\widehat{G}$ can be arranged in such a way that its biadjacency matrix $A$ becomes graceful [respectively, completely graceful] and $A$ has $k+1$ rows.
\end{thm}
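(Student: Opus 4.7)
The plan is to view this as the bipartite refinement of Theorem \ref{t1}: an $\alpha$-labeling is just a graceful labeling in which every edge straddles the threshold $k$, so after ordering vertices by label the only $1$-entries of the canonical adjacency matrix lie in the upper-right block, and the diagonal condition of Theorem \ref{t1} restricts there verbatim. The goal is to carry out this reduction carefully enough to control the row count and to handle the passage to $\widehat{G}$.

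For the forward direction, I would start with an $\alpha$-labeling $f$ of $G$ with critical number $k$, which immediately yields the bipartition $V_1=\{v:f(v)\leq k\}$, $V_2=\{v:f(v)>k\}$ noted in the text, together with $n\leq m+1$. Extending $f$ bijectively to $\widehat{G}$ by assigning the missing labels to the added isolated vertices preserves the $\alpha$-labeling property (isolated vertices impose no constraint) and places those vertices in $V_1$ or $V_2$ according to their labels, so that $|V_1|$ is exactly $k+1$. Listing the vertices of $\widehat{G}$ as $v_0,\ldots,v_m$ in increasing label order, the key observation is that for entries in the $(k+1)\times(m-k)$ biadjacency block the induced edge label $f(v_{k+j})-f(v_{i-1})$ equals the box-value $(k+1)+j-i$ of the position $(i,j)$, so distinctness of edge labels translates word-for-word into gracefulness of $A$.

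The converse runs this reading in reverse: given a biadjacency matrix $A$ with $k+1$ rows satisfying the gracefulness condition, I would define $f(v_i)=i$ along the chosen arrangement, invoke the same box-value identity to obtain distinct edge labels (hence a graceful labeling of $\widehat{G}$ and thus of $G$), and note that the critical-number condition is automatic, since every edge is recorded in $A$ and therefore joins a row-vertex (label $\leq k$) to a column-vertex (label $>k$). For the complete case, one simply counts: the $(k+1)\times(m-k)$ block has exactly $(k+1)+(m-k)-1=m$ diagonals, matching the $m$ possible edge labels, so a complete $\alpha$-labeling forces each diagonal to contain exactly one $1$, and conversely. The main bookkeeping obstacle I anticipate is verifying that the row count is \emph{exactly} $k+1$ rather than merely at most $k+1$, and this is what forces the detour through $\widehat{G}$: only after padding is every label in $\{0,1,\ldots,k\}$ guaranteed to occur as a vertex label.
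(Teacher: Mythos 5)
Your proposal is correct and follows essentially the same route as the paper: both directions pass to $\widehat{G}$, order the vertices by label, and identify the $\alpha$-labeling condition with gracefulness of the $(k+1)\times(m-k)$ biadjacency block. The only cosmetic difference is that you verify the box-value/edge-label identity directly on that block, whereas the paper delegates this to Theorem \ref{t1} applied to the full adjacency matrix of $\widehat{G}$.
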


\begin{proof}
Let $G=(V,E)$ be a bipartite graph with $n$ vertices and $m$ edges. Suppose $n\leqslant m+1$ and there is a bipartition $\widehat{V}=\widehat{V}_1\cup\widehat{V}_2$ of $\widehat{G}=(\widehat{V},E)$ where the vertices of $\widehat{G}$ can be arranged in such a way that its biadjacency matrix $A$ is graceful and $A$ has $k+1$ rows. Then the adjacency matrix of $\widehat{G}$ is also graceful and hence by Theorem \ref{t1}, $\widehat{G}$ is graceful with the $\beta$-labeling $\widehat{f}$ such that $\widehat{f}(v_i)=i$ for all $i=0,1,2,\ldots,m$, where $\widehat{V}=\set{v_0,v_1,v_2,\ldots,v_m}$. Let $\widehat{V}_1=\set{v_0,v_1,v_2,\ldots,v_k}$ and $\widehat{V}_2=\set{v_{k+1},v_{k+2},\ldots,v_m}$. Then for every edge $uv\in E$, ($u\in V_1$, $v\in V_2$), we have $\widehat{f}(u)\leqslant k<\widehat{f}(v)$. Thus $\widehat{G}$ and hence $G$ has an $\alpha$-labeling, say, $f$ with a critical number $k$. 

\vspace{0.5em}
Conversely, let $G=(V,E)$ be a graph with $n$ vertices and $m$ edges and it has an $\alpha$-labeling $f$ with a critical number $k$. Then $G$ is bipartite and $n\leqslant m+1$. Let $\widehat{G}=(\widehat{V},E)$. Define a labeling  $\Map{\widehat{f}}{\widehat{V}}{\set{0,1,2,\ldots ,m}}$ to the vertices of $\widehat{G}$ such that $\widehat{f}(u)=f(u)$ for all $u\in V$. Label the other vertices of $\widehat{G}$ arbitrarily so that $\widehat{f}$ becomes bijective. Then $\widehat{f}$ is an $\alpha$-labeling of $\widehat{G}$ with the same critical number $k$ since $G$ and $\widehat{G}$ have the same set of edges. Let us denote the vertex $v\in \widehat{V}$ by $v_i$ if $\widehat{f}(v_i)=i$. Let $\widehat{V}_1=\set{v_0,v_1,v_2,\ldots,v_k}$ and $\widehat{V}_2=\set{v_{k+1},v_{k+2},\ldots,v_m}$. Then $\widehat{V}=\widehat{V}_1\cup\widehat{V}_2$ is a bipartition of $\widehat{V}$ and the biadjacency matrix, say, $A$ of $\widehat{G}$ corresponding to this partition (arranging the vertices according to the increasing order of their indices) is graceful and $A$ has $k+1$ rows as $\widehat{f}$ is an $\alpha$-labeling of $\widehat{G}$ with a critical number $k$.

Finally, it is clear that any $\alpha$-labeling $f$ of $G$ is a complete $\alpha$-labeling if and  only if $n=m+1$.
\end{proof}

\begin{cor}\label{c2}
Let $T$ be a tree. Then $T$ has a complete $\alpha$-labeling if and only if there is a bipartition of $T$ where the vertices of $T$ can be arranged in such a way that its biadjacency matrix $A$ becomes completely graceful.
\end{cor}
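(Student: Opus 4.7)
The plan is to derive Corollary \ref{c2} as a direct specialization of Theorem \ref{t2} to the tree case, where several hypotheses of the theorem become automatic.

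First I would observe that if $T$ is a tree with $n$ vertices, then $T$ has exactly $m=n-1$ edges, so the condition $n=m+1$ is satisfied automatically, which in turn forces $\widehat{T}=T$ (no isolated vertices are adjoined). I would also note that every tree is bipartite, since it contains no odd cycles. With these two remarks, the ambient hypotheses ``$n\leqslant m+1$ and $G$ is bipartite'' in Theorem \ref{t2} are free for $T$, and the distinction between ``the biadjacency matrix of $\widehat{G}$'' and ``the biadjacency matrix of $T$'' disappears.

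Next I would invoke Theorem \ref{t2} in its ``complete $\alpha$-labeling'' form. For the forward direction, if $T$ has a complete $\alpha$-labeling $f$ with some critical number $k$, the theorem supplies a bipartition of $T$ and an ordering of the vertices (according to increasing $f$-values) such that the resulting biadjacency matrix $A$ is completely graceful. For the converse, given a bipartition of $T$ together with an ordering of the vertices within each part that makes the biadjacency matrix $A$ completely graceful, Theorem \ref{t2}, applied with $k+1$ equal to the number of rows of $A$, yields a complete $\alpha$-labeling of $T$ with critical number $k$. Since we do not need to keep track of $k$ in the statement of the corollary, we simply drop it after the appeal.

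I expect no real obstacle: the only point that warrants a line of explanation is why bipartiteness of $T$ need not be assumed as a separate hypothesis (because it is automatic for trees), and why $\widehat{T}=T$ (because $n=m+1$ for trees). Given these, the corollary is essentially a restatement of the $n=m+1$ half of Theorem \ref{t2}, and I would write the argument as a short two-sentence deduction.
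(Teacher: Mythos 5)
Your proposal is correct and matches the paper's (implicit) argument exactly: the corollary is stated as an immediate specialization of Theorem \ref{t2}, using precisely the observations you make, namely that a tree has $n=m+1$ so $\widehat{T}=T$, that trees are bipartite, and that the critical number $k$ can be read off as one less than the number of rows of $A$ and then suppressed. No gap.
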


\begin{figure}[h]
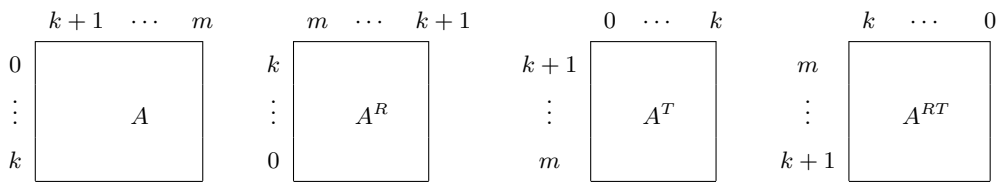

\begin{center}
{\footnotesize $\begin{array}{c|llr|r}
\multicolumn{1}{c}{} & \multicolumn{1}{c}{k+1} & \cdots & \multicolumn{2}{c}{m}\\
\cline{2-4} 
0 & & & &\\ 
\vdots & & A & &\\ 
k & & &\\
\cline{2-4}
\end{array}$\quad $\begin{array}{c|llr|r}
\multicolumn{1}{c}{} & \multicolumn{1}{c}{m} & \cdots & \multicolumn{2}{c}{k+1}\\
\cline{2-4} 
k & & & &\\ 
\vdots & & A^R & &\\ 
0 & & &\\
\cline{2-4}
\end{array}$\quad $\begin{array}{c|llr|l}
\multicolumn{1}{c}{} & \multicolumn{1}{c}{0} & \cdots & \multicolumn{2}{c}{k}\\
\cline{2-4} 
k+1 & & & &\\ 
\vdots & & A^T & &\\ 
m & & &\\
\cline{2-4}
\end{array}$\quad $\begin{array}{c|lcr|r}
\multicolumn{1}{c}{} & \multicolumn{1}{c}{k} & \cdots & \multicolumn{2}{c}{0}\\
\cline{2-4} 
m & & & &\\ 
\vdots & & A^{RT} & &\\ 
k+1 & & &\\
\cline{2-4}
\end{array}$}
\caption{The matrices (i) $A$, (ii) $A^R$, (iii) $A^T$ and (iv) $A^{RT}$}\label{fig2}
\end{center}
\end{figure}

\begin{defn}\label{defcan}
Let $G=(V,E)$ be a graph with $n$ vertices and $m=n-1$ edges. Suppose $G$ has a complete $\alpha$-labeling $f$ with a critical number $k$. Define $\Map{f^*}{V}{\set{0,1,2,\ldots ,m}}$ by $f^*(v)=k-f(v) \pmod{n}$. Then $f^*(v)$ is again a complete $\alpha$-labeling of $G$ and it is known as the {\em inverse} $\alpha$-labeling of $f$ \cite{HKR,RosaT,Sh}. Let $A$ be the completely graceful biadjacency matrix of $G$ consisting of $k+1$ rows corresponding to the vertices $v\in V$ such that $0\leqslant f(v)\leqslant k$ and $m-k$ columns corresponding to the vertices $v\in V$ such that $k+1\leqslant f(v)\leqslant m$, where vertices are arranged according to the increasing order of their labels (cf. Figure \ref{fig2}(i)). The matrix $A$ is called a {\em canonical biadjacency matrix} of $G$. Now if we arrange the vertices $v$ of $G$ in the matrix $A$ according to the increasing order of $f^*(v)$ keeping the same number of rows and columns and obtain a matrix $A^R$ (cf. Figure \ref{fig2}(ii)), then $A^R$ is again a completely graceful biadjacency matrix of $G$. Also it is easy to see that the transpose of a completely graceful matrix is again a completely graceful matrix. Thus $A^T$ (cf. Figure \ref{fig2}(iii)) and $A^{RT}=(A^R)^T$ (cf. Figure \ref{fig2}(iv)) are also completely graceful biadjacency matrix of $G$.
\end{defn}

\section{Joining graceful graphs}

In this section we describe some methods of joining graceful graphs or graphs with $\alpha$-labeling which we will use to obtain some classes of graceful lobsters. Some of these results in some other form or in some special form appeared in the literature of graceful graphs \cite{Bloom,HKR,SZ}, but for the sake of completeness and further use we include the sketches of proofs.

\begin{defn}\label{double}
Let $G=(V,E)$ be a graceful graph with $n$ vertices, $m$ edges and a $\beta$-labeling $f$. Suppose $V\subseteq\set{v_0,v_1,\ldots,v_m}$, where $f(v_i)=i$ for all $v_i\in V$. Let $G_1=(V_1,E_1)$ be another (isomorphic) copy of $G$ with a $\beta$-labeling $f_1$ such that $V_1=\Set{u_i}{v_i\in V}$ and $f_1(u_i)=i=f(v_i)$ for all $v_i\in V$.  Now let us fix some $j\in\Set{i}{v_i\in V}$. Let $G_2$ be the graph obtained from $G$ and $G_1$ by joining the vertices $v_j$ and $u_j$ with an edge. Then $G_2$ is called a {\em double} of $G$ at $j$.
\end{defn}

The above definition generalizes the concept of doubled graceful trees in \cite{Bloom}.

\begin{prop}\label{p1}
Any double of a graceful graph with $m$ edges has an $\alpha$-labeling with a critical number $m$.
\end{prop}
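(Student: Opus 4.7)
The plan is to exhibit an explicit labeling $g : V(\widehat{G_2}) \to \set{0, 1, \ldots, 2m+1}$ and verify that it is an $\alpha$-labeling with critical number $m$. Since any $\alpha$-labeled graph is bipartite, the proposition implicitly requires $G$ to be bipartite (which is the case for the trees of interest in this paper). I thus fix a bipartition $V = A \cup B$ of $G$.

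Define
\[
g(v_i) = \begin{cases} i, & v_i \in A, \\ m+1+i, & v_i \in B, \end{cases} \qquad g(u_i) = \begin{cases} m+1+i, & v_i \in A, \\ i, & v_i \in B, \end{cases}
\]
extending arbitrarily to any isolated vertices of $\widehat{G_2}$. By construction the labels $\set{0, 1, \ldots, m}$ are taken precisely by $\Set{v_i}{v_i \in A} \cup \Set{u_i}{v_i \in B}$ and the labels $\set{m+1, \ldots, 2m+1}$ by the complementary set, so $g$ is injective and every edge of $G_2$ (which has one endpoint in each of these two sets) crosses the critical number $m$.

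The core calculation is showing that the induced edge labels fill $\set{1, \ldots, 2m+1}$ without repetition. For an edge $v_iv_k \in E$ with $v_i \in A$ and $v_k \in B$, one finds
\[
|g(v_i) - g(v_k)| = m+1+(k-i), \qquad |g(u_i) - g(u_k)| = m+1-(k-i).
\]
Setting $t = |i-k|$, the single edge $v_iv_k$ thus contributes the pair $\set{m+1+t, m+1-t}$ (one label from the copy $G$, one from the copy $G_1$). Since $f$ is a graceful labeling of the $m$-edge graph $G$, the value $t$ hits each element of $\set{1, \ldots, m}$ exactly once as $v_iv_k$ varies, so these pairs jointly exhaust $\set{1,\ldots,m} \cup \set{m+2,\ldots,2m+1}$. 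The new edge $v_ju_j$ contributes $|j - (m+1+j)| = m+1$, completing $\set{1, \ldots, 2m+1}$.

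The main obstacle is the sign bookkeeping: the swap of roles of $A$ and $B$ between $g|_V$ and $g|_{V_1}$ is exactly what ensures that each original edge label $t$ spawns the symmetric pair $m+1 \pm t$, averting collisions; without the swap the two copies would merely replicate each other's labels. Once this is in place, $g$ is the required $\alpha$-labeling of $G_2$ with critical number $m$.
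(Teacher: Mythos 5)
Your proposal is correct and is essentially the paper's own proof translated out of the matrix language: the labeling $g$ you define (labels $0,\ldots,m$ on one side, $m+1,\ldots,2m+1$ on the other, with the roles of the two partite sets swapped between the two copies) is exactly the labeling one reads off the rows and columns of the paper's biadjacency matrix $A_1$, obtained from the canonical adjacency matrix of $\widehat{G}$ by inserting a $1$ at position $(j,j)$, and your pairing $\set{m+1-t,\ m+1+t}$ is precisely the observation that this new $1$ lies on the all-zero principal diagonal, so $A_1$ remains graceful. A small merit of your write-up is that it makes explicit the bipartiteness hypothesis and the copy-swapping isomorphism that the paper leaves implicit in the claim that $A_2$ is the adjacency matrix of $\widehat{G}_2$.
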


\begin{proof}
Let $G=(V,E)$ be a graceful graph with $m$ edges and a $\beta$-labeling $f$. Then it follows from Theorem \ref{t1} and Remark \ref{rem1}, a canonical adjacency matrix, say, $A$ of $\widehat{G}$ is graceful, where the vertices are arranged according to the increasing order of their labels. Suppose $G_2$ is obtained from $G$ as in Definition \ref{double}. Let $A_1$ be the matrix obtained from $A$ by adding a $1$ in the position $(j,j)$. Let 
$$A_2\ =\ \begin{array}{|c|c|}
\hline
\mathbf{0} & A_1\\ 
\hline
A_1^T & \mathbf{0}\\
\hline
\end{array}$$
Then it is easy to see that $A_2$ is the adjacency matrix of $\widehat{G}_2$ and $A_1$ is the biadjacency matrix of $\widehat{G}_2$. Moreover $A_1$ is graceful as it includes only one $1$ at the principal diagonal of $A$ which contains only $0$'s in $A$. Thus by Theorem \ref{t2}, $\widehat{G}_2$ and hence $G_2$ has an $\alpha$-labeling with a critical number $m$.
\end{proof}

In \cite{Bloom}, Bloom observed that a double of a graceful tree is graceful. For our further use it is important to note that it has a complete $\alpha$-labeling.

\begin{cor}\label{dgrfl}
Any double of a completely graceful graph with $m$ edges has a complete $\alpha$-labeling with a critical number $m$. Hence any double of a graceful tree with $m$ edges has a complete $\alpha$-labeling with a critical number $m$.
\end{cor}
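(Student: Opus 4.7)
The plan is to mirror the proof of Proposition \ref{p1}, upgrading every ``graceful'' to ``completely graceful'' as we go. Since $G$ is completely graceful with $m$ edges we have $n=m+1$ and $G=\widehat{G}$, so by Corollary \ref{c1} the vertices of $G$ can be ordered so that the adjacency matrix $A$ is completely graceful. Form $A_1 = A + E_{jj}$ exactly as in Proposition \ref{p1} (where $E_{jj}$ is the matrix with a single $1$ in position $(j,j)$), so that the adjacency matrix $A_2$ of $\widehat{G}_2$ has $A_1$ as its biadjacency block. The one new fact to establish is that $A_1$, viewed as an $(m+1)\times(m+1)$ biadjacency matrix of $\widehat{G}_2$, is completely graceful.

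The verification is a diagonal count. The matrix $A_1$ has $2m+1$ diagonals. Because $A$ is a completely graceful adjacency matrix, each of the $2m$ non-principal diagonals already carries exactly one $1$, while the principal diagonal (box-value $m+1$) is entirely zero; adding $E_{jj}$ deposits a single $1$ on the principal diagonal without touching any other diagonal. Hence every diagonal of $A_1$ contains exactly one $1$, which is the definition of complete gracefulness for a biadjacency matrix.

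The ``completely graceful'' half of Theorem \ref{t2} then supplies a complete $\alpha$-labeling of $\widehat{G}_2$; since $G_2$ has $2(m+1)$ vertices and $2m+1$ edges we already have $\widehat{G}_2 = G_2$, so the labeling is immediately complete on $G_2$, with critical number $k=m$ read off from the $k+1=m+1$ rows of $A_1$. The statement for graceful trees then falls out, since a graceful tree with $m$ edges has $m+1$ vertices and is automatically completely graceful. The only real obstacle is conceptual: one has to notice the subtle mismatch between ``completely graceful'' for an adjacency matrix (the principal diagonal must be empty) and for a biadjacency matrix (every diagonal, principal included, must hold exactly one $1$), and observe that the entry introduced at $(j,j)$ is exactly the bookkeeping move that reconciles the two conventions.
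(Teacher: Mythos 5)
Your proof is correct and follows essentially the same route as the paper, which treats the corollary as immediate from the proof of Proposition \ref{p1} by observing that when $G$ is completely graceful the matrix $A_1$ becomes completely graceful (every one of its $2m+1$ diagonals, principal included, holding exactly one $1$) and that $G_2=\widehat{G}_2$, so Theorem \ref{t2} yields a \emph{complete} $\alpha$-labeling with critical number $m$. The only nit is that for a general completely graceful graph you should invoke Theorem \ref{t0} rather than Corollary \ref{c1}, which is stated only for trees.
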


\vspace{-0.5em}
\begin{figure}[h]
\begin{center}
\includegraphics[scale=0.65]{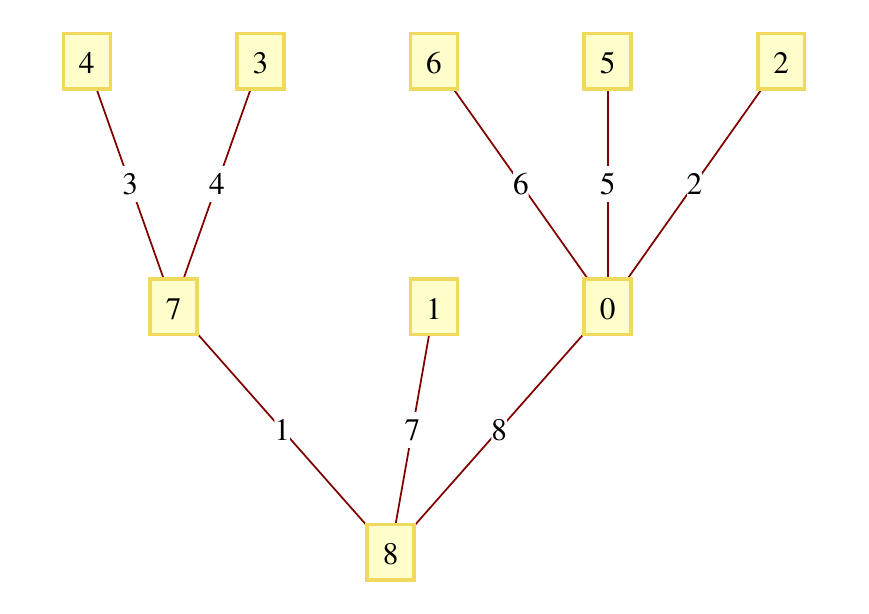}\qquad \includegraphics[scale=0.75]{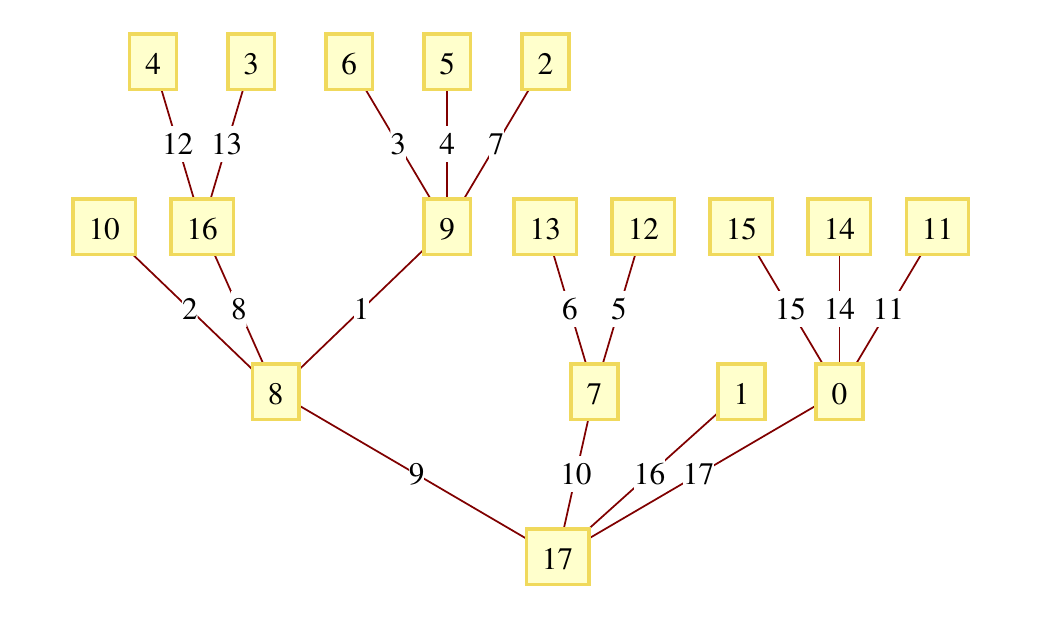}
\caption{The graceful graph $G$ (left) and its double $G_2$ (right) at $8$ in Example \ref{e1}}\label{fig1}
\end{center}
\end{figure}

\begin{exmp}\label{e1}
Let $G$ be the graceful tree in Figure \ref{fig1}(left). Let us fix the vertex labeled $8$ in $G$ and obtain a double $G_2$ of $G$ by joining this vertex with its corresponding vertex in a copy of $G$ by an edge (cf. Figure \ref{fig1}(right)). Then $G_2$ has a complete $\alpha$-labeling with a critical number $8$. A canonical adjacency matrix $A$ of $G$ and a canonical biadjacency matrix $A_1$ of $G_2$ are given by
{\footnotesize $$A=\begin{array}{l|lllllllll|}
\multicolumn{1}{c}{} & 0 & 1 & 2 & 3 & 4 & 5 & 6 & 7 & \multicolumn{1}{c}{8}\\
\cline{2-10}
0 & 0 & 0 & 1 & 0 & 0 & 1 & 1 & 0 & 1 \\
1 & 0 & 0 & 0 & 0 & 0 & 0 & 0 & 0 & 1 \\
2 & 1 & 0 & 0 & 0 & 0 & 0 & 0 & 0 & 0 \\
3 & 0 & 0 & 0 & 0 & 0 & 0 & 0 & 1 & 0 \\
4 & 0 & 0 & 0 & 0 & 0 & 0 & 0 & 1 & 0 \\
5 & 1 & 0 & 0 & 0 & 0 & 0 & 0 & 0 & 0 \\
6 & 1 & 0 & 0 & 0 & 0 & 0 & 0 & 0 & 0 \\
7 & 0 & 0 & 0 & 1 & 1 & 0 & 0 & 0 & 1 \\
8 & 1 & 1 & 0 & 0 & 0 & 0 & 0 & 1 & 0 \\
\cline{2-10}
\end{array}\quad A_1=\begin{array}{l|ccccccccc|}
\multicolumn{1}{c}{} & 9 & 10 & 11 & 12 & 13 & 14 & 15 & 16 &\multicolumn{1}{c}{17}\\
\cline{2-10}
0 & 0 & 0 & 1 & 0 & 0 & 1 & 1 & 0 & 1 \\
1 & 0 & 0 & 0 & 0 & 0 & 0 & 0 & 0 & 1 \\
2 & 1 & 0 & 0 & 0 & 0 & 0 & 0 & 0 & 0 \\
3 & 0 & 0 & 0 & 0 & 0 & 0 & 0 & 1 & 0 \\
4 & 0 & 0 & 0 & 0 & 0 & 0 & 0 & 1 & 0 \\
5 & 1 & 0 & 0 & 0 & 0 & 0 & 0 & 0 & 0 \\
6 & 1 & 0 & 0 & 0 & 0 & 0 & 0 & 0 & 0 \\
7 & 0 & 0 & 0 & 1 & 1 & 0 & 0 & 0 & 1 \\
8 & 1 & 1 & 0 & 0 & 0 & 0 & 0 & 1 & \mathbf{1} \\
\cline{2-10}
\end{array}
$$}
\end{exmp}

It is easy to join any (finite) collection of graphs with $\alpha$-labeling with no common vertices to form a graph with $\alpha$-labeling.

\begin{prop}\label{p2}
Let $\set{G_1,G_2,\ldots,G_r}$ be a collection of graphs such that for each $i=1,2,\ldots,r$, $G_i=(V_i,E_i)$ has an $\alpha$-labeling with a critical number $k_i$ and $V_i\cap V_j=\emptyset$ for all $i\neq j$. Then the graph $G=G_1+ G_2+\cdots + G_r$ (disjoint union of graphs $G_1,G_2,\ldots,G_r$) has an $\alpha$-labeling with a critical number $k=k_1+k_2+\cdots +k_r+r-1$.
\end{prop}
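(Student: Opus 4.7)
The plan is to invoke Theorem~\ref{t2} after building a graceful biadjacency matrix of $\widehat G$ by stacking the biadjacency matrices of the $\widehat G_i$'s. By Theorem~\ref{t2}, each $\widehat G_i$ has a completely graceful biadjacency matrix $A_i$ of size $(k_i+1)\times(m_i-k_i)$. The target matrix $A$ for $\widehat G$ has size $(k+1)\times(m-k)$, where $m=\sum_i m_i$ and $k=\sum_i k_i+r-1$. A first dimension check: $\sum_i(k_i+1)=k+1$, matching the row count exactly, while $\sum_i(m_i-k_i)=(m-k)+(r-1)$, overshooting the column count by $r-1$. This surplus matches the $r-1$ extra isolated vertices of $\widehat G_1+\cdots+\widehat G_r$ compared with $\widehat G$ and must be absorbed by the construction.

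The main step is a staircase placement: $A_1$ is placed in the top-right corner of $A$, each $A_{i+1}$ is placed just below and to the left of $A_i$, and $A_r$ ends up in the bottom-left corner, with consecutive blocks $A_i$ and $A_{i+1}$ sharing exactly one column so that the $r-1$ surplus columns collapse. Assigning $A_i$ to rows $\sum_{j<i}(k_j+1)+1,\ldots,\sum_{j\le i}(k_j+1)$ and to an appropriately chosen column range, a direct box-value computation shows that the $m_i$ ones of $A_i$ occupy big-matrix diagonals with box-values in the interval $\bigl[m-\sum_{j\le i}m_j+1,\,m-\sum_{j<i}m_j\bigr]$. These intervals are pairwise disjoint and partition $\{1,2,\ldots,m\}$, so every diagonal of $A$ carries exactly one $1$; thus $A$ is completely graceful and Theorem~\ref{t2} delivers the $\alpha$-labeling of $\widehat G$ (and hence of $G$) with critical number $k$.

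The step I expect to be the main obstacle is verifying that the staircase-assembled $A$ really is a biadjacency matrix of $\widehat G$ rather than of some other bipartite graph with the same parameters. The concern is that in each shared column, $A_i$ contributes a forced $1$ at its bottom-left corner (the single-cell diagonal~$1$ of $A_i$) while $A_{i+1}$ contributes a forced $1$ at its top-right corner (diagonal $m_{i+1}$), so naively identifying these columns would link a non-isolated vertex of $G_i$ to one of $G_{i+1}$ and introduce a spurious bridging edge. To handle this I would, where necessary, replace some of the $A_i$'s by their equivalent variants $A_i^R$, $A_i^T$, or $A_i^{RT}$ from Definition~\ref{defcan}, repositioning the forced $1$'s so that in each shared column the $1$ from one of the two blocks coincides with an all-zero row of the other; the necessary condition $\sum_i n_i\le m+1$ for $G$ to be graceful guarantees enough isolated-vertex positions to make these identifications legitimate.
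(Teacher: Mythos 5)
Your overall strategy is the paper's: arrange canonical biadjacency matrices $A_i$ of the $\widehat{G}_i$ anti-diagonally and invoke Theorem \ref{t2}, with the box-value computation showing the blocks occupy disjoint sets of diagonals. Where you diverge is in trying to reconcile the column count, and here your instinct is sharper than your repair. The paper simply writes down the $(k+1)\times\sum_i(m_i-k_i)$ block matrix and declares it a biadjacency matrix of $\widehat{G}$, ignoring the $r-1$ surplus columns you correctly flag. But your fix---overlapping one column of $A_i$ with one column of $A_{i+1}$ and using the variants $A_i^R,A_i^T,A_i^{RT}$ to empty the shared column on one side---cannot work. In any $\alpha$-labeling of $\widehat{G}_i$ with critical number $k_i$, the edge difference $m_i$ forces the edge $\set{0,m_i}$ and the edge difference $1$ forces the edge $\set{k_i,k_i+1}$; hence the first and last rows and the first and last columns of $A_i$ each contain a $1$ (the unused labels, which go to isolated vertices, are necessarily interior). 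The $R$, $T$, $RT$ variants merely permute these four boundary lines, so whichever variants you choose the shared column receives a $1$ from both blocks, the identified vertex acquires neighbours in two different components, and the matrix represents the wrong graph. Reusing an interior all-zero column instead would require a column permutation that destroys the box-value structure.

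No repair is possible, because the obstacle you identified is a genuine obstruction to the statement itself: if every $G_i$ is completely graceful (for instance a tree), then $G$ has $\sum_i(m_i+1)=m+r>m+1$ vertices and admits no graceful labeling at all; already $K_2+K_2$ is a counterexample to the proposition as written. The hypothesis $\sum_i n_i\leqslant m+1$ you invoke at the end is not among the proposition's hypotheses, and even granting it the forced boundary $1$'s above still block the overlap. What both your argument and the paper's actually establish is the weaker fact that $G$ admits an injective labeling into $\set{0,1,\ldots,m+r-1}$ with distinct edge differences and the $\alpha$-property relative to $k$; that weaker fact is what the subsequent constructions really consume, since in Proposition \ref{p3} the $r-1$ extra labels are absorbed by the $r-1$ new edges joining consecutive components, restoring $n=m+1$. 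So: same approach as the paper, a more honest accounting of the gap, but a patch that does not---and cannot---close it without strengthening the hypotheses or weakening the conclusion.
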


\begin{proof}
For each $i=1,2,\ldots,r$, let $A_i$ be a canonical biadjacency matrix of $\widehat{G}_i$. Then $A_i$ has $k_i+1$ rows (cf. Theorem \ref{t2}). Consider the following matrix $A$:

$$A=\left(
\begin{array}{llll}
\mathbf{0} & \ldots & \mathbf{0} & A_1 \\
\mathbf{0} & \ldots & A_2 & \mathbf{0} \\
\vdots & \reflectbox{$\ddots$} & \vdots & \vdots \\
A_r & \ldots & \mathbf{0} & \mathbf{0}
\end{array}
\right)$$

Then it is clear that $A$ is a graceful biadjacency matrix of $\widehat{G}$ and $A$ has $k_1+k_2+\cdots +k_r+r$ rows. Thus $\widehat{G}$ and hence $G$ has an $\alpha$-labeling with a critical number $k=k_1+k_2+\cdots +k_r+r-1$.
\end{proof}

Now we wish to join a collection of vertex-disjoint graphs with complete $\alpha$-labelings in the form of a chain by adding edges between certain vertices of consecutive graphs in the collection to obtain a bigger graph with a complete $\alpha$-labeling. For brievity we will just indicate the completely graceful biadjacency matrices or adjacency matrices of the resulting graphs. 

\begin{prop}\label{p3}
Let $\set{G_1,G_2,\ldots,G_r}$ be a collection of graphs such that for each $i=1,2,\ldots,r$, $G_i=(V_i,E_i)$ has a complete $\alpha$-labeling with a critical number $k_i$ and $V_i\cap V_j=\emptyset$ for all $i\neq j$. Let us denote a vertex in the graph $G_i$ with the label $\lambda$ by $\lambda_i$ and for convenience we denote $(k_i)_i$ by $k_i$. Let $G=(V,E)$ be a graph obtained by joining the vertices $k_i$ and $m_{i+1}$ with an edge for each $i=1,2,\ldots,r-1$ (cf. Figure \ref{figkm1}). Then $G$ has a complete $\alpha$-labeling with a critical number $k=k_1+k_2+\cdots +k_r+r-1$.
\end{prop}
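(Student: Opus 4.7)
The strategy is to apply Theorem~\ref{t2} for the complete $\alpha$-labeling case by exhibiting a completely graceful biadjacency matrix of $G$ with exactly $k+1$ rows. By Corollary~\ref{c2}, each $G_i$ admits a canonical biadjacency matrix $A_i$ of size $(k_i+1)\times(m_i-k_i)$ which is completely graceful, with rows indexed in increasing order by the labels $0,1,\ldots,k_i$ and columns by $k_i+1,\ldots,m_i$. I would begin by assembling the block anti-diagonal matrix $A$ of Proposition~\ref{p2}, which is already the (graceful, but in general not completely graceful) biadjacency matrix of the disjoint union $G_1+G_2+\cdots+G_r$.

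Next, to encode the $r-1$ new edges, I would augment $A$ by inserting $r-1$ extra $1$'s: for each $i=1,2,\ldots,r-1$, place a $1$ at the position $(P_i,Q_i)$, where $P_i$ is the last row of the $i$-th block row (the row indexing the vertex $k_i$ of $G_i$) and $Q_i$ is the rightmost column of the block column housing $A_{i+1}$ (the column indexing $m_{i+1}$ of $G_{i+1}$). Since $A_i$ sits in block column $r-i+1$ while $Q_i$ belongs to block column $r-i$, each $(P_i,Q_i)$ lies inside a zero block of $A$, so the augmented matrix $A^{\ast}$ is well-defined and is precisely the biadjacency matrix of $G$ with respect to the natural bipartition inherited from the $G_i$. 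Observe that $A^{\ast}$ has $\sum_i(k_i+1)=k+1$ rows and $\sum_i(m_i-k_i)$ columns, hence exactly $m=\sum_i m_i+(r-1)$ diagonals.

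The crux is then to show that every diagonal of $A^{\ast}$ contains exactly one $1$. A short box-value computation, using the formula (rows)$+j-i$ for the position $(i,j)$ of an $m'\times n'$ matrix, reveals that the $m_i$ ones inherited from the block $A_i$ populate the diagonals whose box-values lie in the consecutive interval $\bigl[\,\sum_{l\geq i+1}(m_l+1)+1,\ \sum_{l\geq i}(m_l+1)-1\,\bigr]$, with each such diagonal receiving exactly one $1$ because $A_i$ is itself completely graceful. The new $1$ at $(P_i,Q_i)$ sits on the diagonal of box-value precisely $\sum_{l\geq i+1}(m_l+1)$, which fits exactly between the ranges contributed by $A_{i+1}$ and $A_i$. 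Consequently, as $i$ varies, these ranges together with the singletons contributed by the $r-1$ new edges partition $\{1,2,\ldots,m\}$; each diagonal of $A^{\ast}$ contains exactly one $1$, so $A^{\ast}$ is completely graceful. Applying Theorem~\ref{t2} then gives that $G$ has a complete $\alpha$-labeling with critical number $k=k_1+k_2+\cdots+k_r+r-1$.

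The main obstacle is the box-value bookkeeping in the anti-diagonal arrangement: I must verify, without sign errors, that the box-value of each added $1$ is precisely one more than the top of the preceding block's range of diagonals and one less than the bottom of the following block's range, so that the collection of $1$'s in $A^{\ast}$ fits together with no overlap and no gap.
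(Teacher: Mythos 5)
Your construction is exactly the paper's: the paper also takes the anti-diagonal block matrix of Proposition \ref{p2} and inserts the extra $1$ for the edge $k_i m_{i+1}$ at the position (row $k_i$, column $m_{i+1}$), merely displaying the resulting matrix and asserting it is completely graceful, whereas you additionally carry out the box-value bookkeeping showing the blocks and the $r-1$ new $1$'s tile the diagonals $1,\ldots,m$ with no overlap or gap. The only quibble is that the canonical biadjacency matrices come from Definition \ref{defcan} (via Theorem \ref{t2}) rather than Corollary \ref{c2}, since the $G_i$ need not be trees.
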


\begin{proof}
For each $i=1,2,\ldots,r$, let $A_i$ be a canonical biadjacency matrix of $G_i$ (cf. Definition \ref{defcan}). Then a completely graceful biadjacency matrix $A$ of $G$ is given by
{\footnotesize $$A=\quad\begin{array}{c|llr|rr|llr|lll|}
\multicolumn{1}{c}{} & \multicolumn{1}{c}{k_r+1} & & \multicolumn{1}{l}{m_r} & \cdots & \multicolumn{1}{r}{} & \multicolumn{1}{c}{k_2+1} & & \multicolumn{1}{l}{m_2} & \multicolumn{1}{c}{k_1+1} & & \multicolumn{1}{l}{m_1} \\
\cline{2-12} 
0_1 & & & & & & & & & & & \\ 
 & & & & \cdots & & & & & & A_1 & \\
k_1 & & & & & & & & 1 & & & \\
\cline{2-12}
0_2 & & & & & & & & & & & \\ 
 & & & & \cdots & & & A_2 & & & & \\
k_2 & & & & & 1 & & & & & & \\
\cline{2-12}
\vdots & & \vdots & & & \reflectbox{$\ddots$} & & \vdots & & & \vdots & \\ 
k_{r-1} & & \vdots & 1 & \reflectbox{$\ddots$} & & & \vdots & & & \vdots & \\
\cline{2-12}
0_r & & & & & & & & & & & \\ 
 & & A_r & & \cdots & & & & & & & \\
k_r & & & & & & & & & & & \\
\cline{2-12}
\end{array}$$}
\end{proof}

\begin{figure}[ht]
\begin{center}
\includegraphics[scale=0.5]{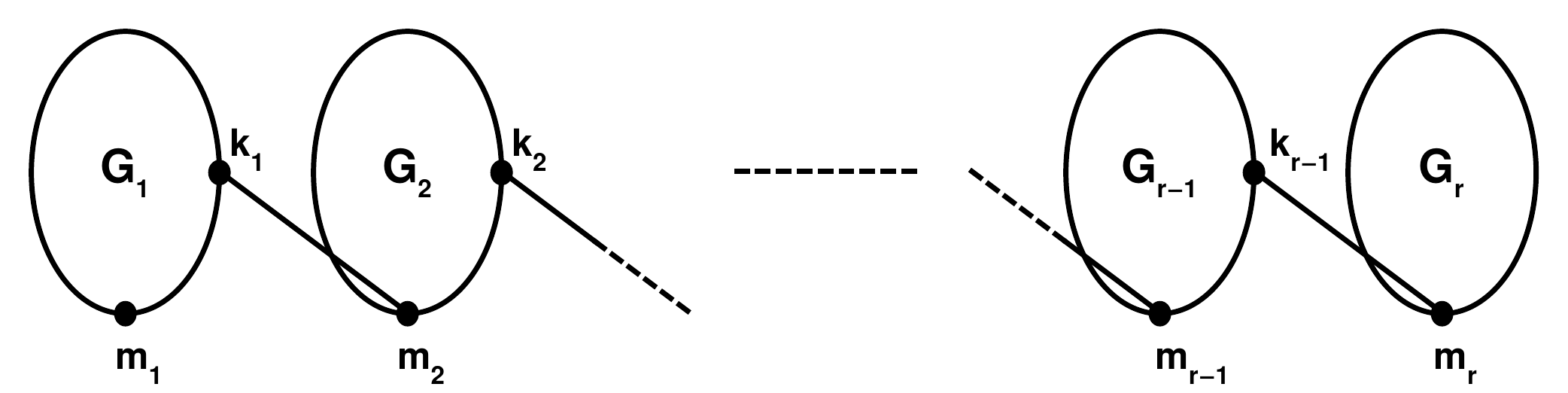}
\caption{The graph $G$ in Proposition \ref{p3}}\label{figkm1}
\end{center}
\end{figure}

Now if we replace the submatrix $A_i$ by $A^T_i$ (cf. Figure \ref{fig2}(iii)) for all odd $i\in\set{1,2,\ldots,r}$ in the matrix $A$ described in the proof of Proposition \ref{p3}, then we have the following result.

\begin{prop}\label{p4}
Let $\set{G_1,G_2,\ldots,G_r}$ be a collection of graphs defined as in Proposition \ref{p3}. Let $G=(V,E)$ be a graph obtained by joining the vertices $m_i$ and $m_{i+1}$ with an edge for each odd $i\in\set{1,2,\ldots,r-1}$ and joining the vertices $k_i$ and $k_{i+1}$ with an edge for each even $i\in\set{2,3,\ldots,r-1}$ (cf. Figure \ref{figkm2}(left)). Then $G$ has a complete $\alpha$-labeling with a critical number $k=k_1+k_2+\cdots +k_r+r-1$. Moreover, if $m_i-k_i=m_{i+1}-k_{i+1}$ for all even $i\in\set{2,3,\ldots,r-1}$, then the graph $H$ has a complete $\alpha$-labeling with a critical number $k=k_1+k_2+\cdots +k_r+r-1$, where the graph $H$ is obtained by joining the vertices $m_i$ and $m_{i+1}$ with an edge for each $i=1,2,\ldots,r-1$ (cf. Figure \ref{figkm2}(right)).
\end{prop}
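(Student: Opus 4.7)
The plan is to re-run the block-matrix construction from the proof of Proposition \ref{p3}, but with each odd-indexed sub-block replaced by its transpose. By Definition \ref{defcan}, $A_i^T$ is itself a completely graceful biadjacency matrix of $G_i$, with the two bipartition classes interchanged: inside block $i$ the row indices become $(k_i+1)_i,\ldots,(m_i)_i$ (listed in increasing order) and the column indices become $0_i,\ldots,(k_i)_i$. The matrices $A_j$ for even $j$ are kept unchanged. I then place the $r-1$ inter-block $1$'s in the same slot used in Proposition \ref{p3}: the bottom-right corner of the block sitting immediately to the left of $A_i$, for each $i=1,\ldots,r-1$.

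To check that this encodes the edges of $G$, observe that because row and column labels inside each sub-block are listed in increasing order, the bottom row of the row block of $G_i$ is labelled $m_i$ when $i$ is odd and $k_i$ when $i$ is even; dually, the rightmost column of the column block of $G_{i+1}$ is labelled $k_{i+1}$ when $i+1$ is odd and $m_{i+1}$ when $i+1$ is even. Hence a single $1$ in that corner represents exactly the edge $m_im_{i+1}$ when $i$ is odd and $k_ik_{i+1}$ when $i$ is even, which are the required inter-graph edges of $G$. For gracefulness I use that each $A_i^T$ or $A_j$ is still completely graceful, and that the row- and column-block sizes contributed by each $G_j$ always sum to $m_j+1$ regardless of whether $G_j$ is transposed; a box-value computation (identical in spirit to the one implicit in Proposition \ref{p3}) then shows that consecutive sub-blocks occupy disjoint contiguous ranges of box-values with exactly one gap between them, and that the inter-block $1$'s sit precisely in those gaps. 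Theorem \ref{t2} then delivers the complete $\alpha$-labeling of $G$.

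For the graph $H$, the only change is that for each even $i\in\{2,\ldots,r-1\}$ the edge $k_ik_{i+1}$ is replaced by $m_im_{i+1}$. Now $m_i$ occupies the rightmost column of the (untransposed) column block of $G_i$ and $m_{i+1}$ occupies the bottom row of the (transposed) row block of $G_{i+1}$, so the $1$ encoding this edge must be moved from the corner just to the left of $A_i$ (its position in $G$) to the corner just to the right of $A_{i+1}^T$. A direct box-value calculation shows that this relocation changes the box-value of the entry by $(m_i-k_i)-(m_{i+1}-k_{i+1})$, which vanishes precisely under the hypothesis of the proposition. Hence the moved $1$ lands on exactly the same diagonal as the one it replaces, the multiset of box-values hit by the matrix is unchanged, the matrix remains completely graceful, and Theorem \ref{t2} again produces the complete $\alpha$-labeling of $H$. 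The main obstacle is precisely this box-value check: confirming that the relocated corner falls neither inside the range of some $A_j$ or $A_j^T$ block nor onto the diagonal of another inter-block $1$, and it is exactly the dimension hypothesis on consecutive even blocks that makes this work.
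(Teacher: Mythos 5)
Your construction is exactly the paper's: you take the block biadjacency matrix from Proposition \ref{p3}, replace $A_i$ by $A_i^T$ for odd $i$, keep the inter-block $1$'s in the same corner positions to obtain $G$ (so that they now encode $m_im_{i+1}$ for odd $i$ and $k_ik_{i+1}$ for even $i$), and, for $H$, relocate each even-indexed linking $1$ to the corner beside $A_{i+1}^T$, which is precisely the matrix $B$ the paper displays. The only difference is that where the paper simply exhibits $B$, you make explicit the box-value computation $(m_i-k_i)-(m_{i+1}-k_{i+1})=0$ showing the relocated entry stays on its original diagonal; this is a correct elaboration of the same argument.
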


\begin{proof}
A completely graceful biadjacency matrix $A$ of $G$ is obtained from the matrix $A$ described in the proof of Proposition \ref{p3} by replacing the submatrices $A_i$ by $A^T_i$ (cf. Figure \ref{fig2}(iii)) for all odd $i\in\set{1,2,\ldots,r}$ and a completely graceful biadjacency matrix $B$ of $H$ is given by
{\footnotesize $$B=\quad\begin{array}{c|l|llr|llr|llr|lll|}
\multicolumn{1}{c}{} & \multicolumn{1}{c}{\cdots} & \multicolumn{1}{c}{k_4+1} & & \multicolumn{1}{l}{m_4} & 0_3 & & \multicolumn{1}{r}{k_3} & \multicolumn{1}{c}{k_2+1} & & \multicolumn{1}{l}{m_2} & \multicolumn{1}{c}{0_1} & & \multicolumn{1}{l}{k_1} \\
\cline{2-14} 
k_1+1 & \cdots & & & & & & & & & & & & \\ 
 & \cdots & & & & & & & & & & & A^T_1 & \\
m_1 & \cdots & & & & & & & & & 1 & & & \\
\cline{2-14}
0_2 & \cdots & & & & & & & & & & & & \\ 
 & \cdots & & & & & & & & A_2 & & & & \\
k_2 & \cdots & & & & & & & & & & & & \\
\cline{2-14}
k_3+1 & \cdots & & & & & & & & & & & & \\ 
 & \cdots & & & & & A^T_3 & & & & & & & \\
m_3 & \cdots & & & 1 & & & & & & 1 & & & \\
\cline{2-14}
0_4 & \cdots & & & & & & & & & & & & \\ 
 & \cdots & & A_4 & & & & & & & & & & \\
k_4 & \cdots & & & & & & & & & & & & \\
\cline{2-14}
 & \reflectbox{$\ddots$} & & \vdots & & & \vdots & & & \vdots & & & \vdots & \\
\cline{2-14}
\end{array}$$}
\end{proof}

\begin{figure}[h]
\begin{center}
\includegraphics[scale=0.35]{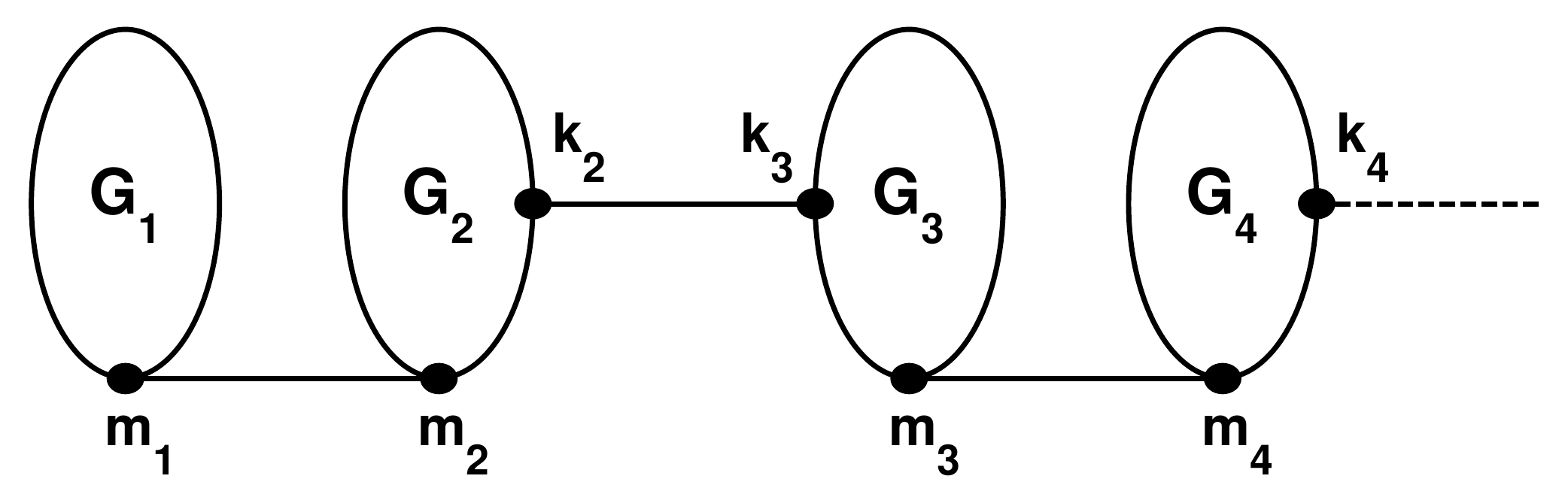}\quad \includegraphics[scale=0.4]{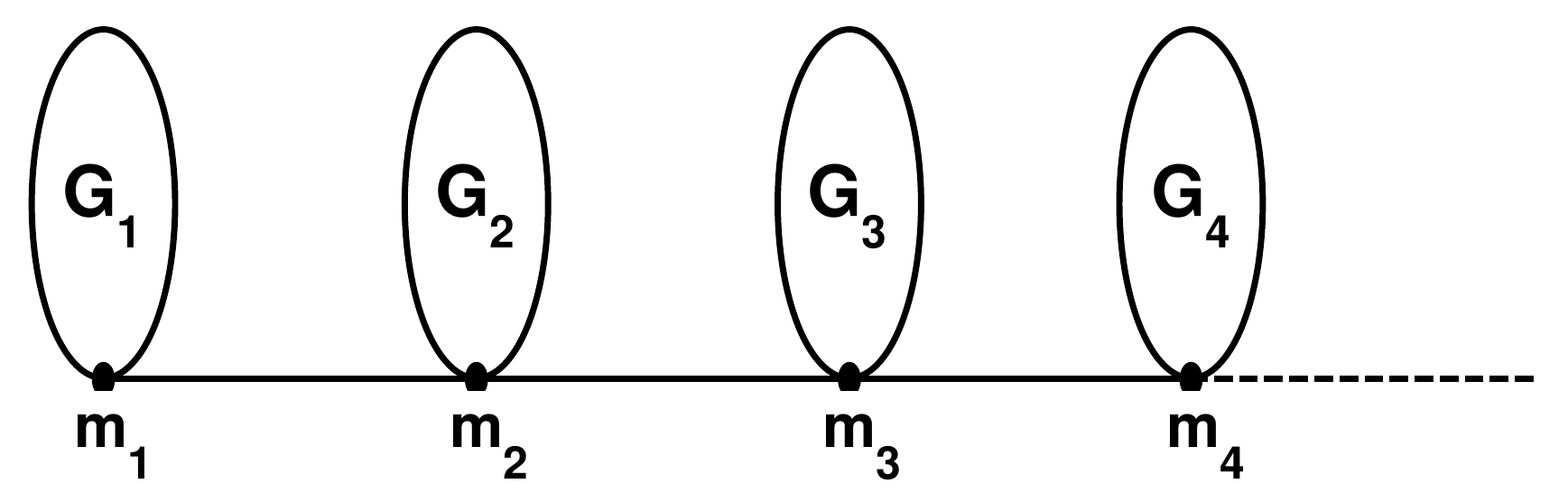}
\caption{The graph $G$ and $H$ in Proposition \ref{p4}}\label{figkm2}
\end{center}
\end{figure}

\begin{cor}\label{cjgr}
Let $\set{G_1,G_2,\ldots,G_r}$ be a collection of completely graceful graphs where $G_i=(V_i,E_i)$ with $|E_i|=m_i$ for all $i=1,2,\ldots,r$ and $V_i\cap V_j=\emptyset$ for all $i\neq j$. For each $i=1,2,\ldots,r-1$, let $G^\prime_i$ be an isomorphic copy of $G_i$. Let us denote a vertex in the graph $G_i$ with the label $\lambda$ by $\lambda_i$ and the corresponding vertex of $G^\prime_i$ by $\lambda^\prime_i$. Let $G=(V,E)$ be a graph obtained by joining the vertex $m_i$ with $m^\prime_i$ and $m^\prime_{i+1}$ for each $i=1,2,\ldots,r-2$ and the vertex $m_{r-1}$ with $m^\prime_{r-1}$ and $m_r$ by edges (cf. Figure \ref{figkm4}). Then $G$ is a completely graceful graph.
\end{cor}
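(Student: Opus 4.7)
My plan is to construct $G$ in two stages: first I would assemble the doubled components into a large completely $\alpha$-labeled graph using the results established earlier in this section, and then attach $G_r$ through a direct block-matrix construction. For the first stage, apply Corollary \ref{dgrfl} to each $G_i$ with $i = 1, \ldots, r-1$, doubling $G_i$ at its vertex labeled $m_i$. This yields a graph $H_i$ with a complete $\alpha$-labeling of critical number $m_i$, in which the doubled vertex $m_i \in V(G_i)$ retains its label (and is therefore the critical vertex of $H_i$), while its twin $m'_i \in V(G'_i)$ receives the maximum label $2m_i+1$. Applying Proposition \ref{p3} to $\{H_1, H_2, \ldots, H_{r-1}\}$ then inserts exactly the required edges $(m_i, m'_{i+1})$ for $i = 1, \ldots, r-2$ and produces a graph $G^*$ with a complete $\alpha$-labeling of critical number $K = \sum_{i=1}^{r-1} m_i + r - 2$. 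Inspection of the last row of the block matrix in the proof of Proposition \ref{p3} identifies the critical vertex of $G^*$ as the vertex $m_{r-1} \in V(G_{r-1})$.

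For the second stage, let $B$ denote the canonical completely graceful biadjacency matrix of $G^*$, of dimensions $(K+1) \times (|E(G^*)| - K)$, and let $C$ denote a canonical adjacency matrix of $G_r$ (which is completely graceful by Theorem \ref{t0}). I would order the vertices of $G$ as $V_1(G^*)$, then $V(G_r)$, then $V_2(G^*)$, so that their labels occupy $\{0, \ldots, K\}$, $\{K+1, \ldots, K+m_r+1\}$, and $\{K+m_r+2, \ldots, |E(G)|\}$, respectively. The adjacency matrix of $G$ in this ordering takes the block form
\[
A_G \;=\; \begin{pmatrix} \mathbf{0} & J & B \\ J^T & C & \mathbf{0} \\ B^T & \mathbf{0} & \mathbf{0} \end{pmatrix},
\]
where $J$ is the $(K+1) \times (m_r+1)$ matrix whose sole nonzero entry is a $1$ in the lower-right corner, encoding the edge joining the critical vertex $m_{r-1}$ of $G^*$ to the vertex $m_r$ of $G_r$.

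A direct diagonal count then shows that $A_G$ is graceful: the upper-triangular $1$s of $C$ lie on the diagonals yielding edge labels $1, 2, \ldots, m_r$; the single $1$ of $J$ lies on the diagonal yielding edge label $m_r + 1$; and the $1$s of $B$, shifted rightward by $m_r + 1$ columns relative to their placement in the adjacency matrix of $G^*$, lie on diagonals yielding edge labels $m_r + 2, \ldots, m_r + 1 + |E(G^*)| = |E(G)|$. These three ranges partition $\{1, 2, \ldots, |E(G)|\}$, so Theorem \ref{t0} gives that $G$ is completely graceful. The main subtle point will be the identification of the critical vertex of $G^*$ as precisely $m_{r-1}$; once that is pinned down, everything else aligns because the connecting edge $(m_{r-1}, m_r)$ lands on the unique diagonal of edge label $m_r + 1$, bridging the edge labels coming from $C$ and those coming from the shifted $B$ without any collision.
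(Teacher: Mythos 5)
Your proposal is correct and follows essentially the same route as the paper: double each $G_i$ ($i\leqslant r-1$) at $m_i$ via Corollary \ref{dgrfl}, chain the doubles with Proposition \ref{p3} so the inserted edges are exactly $(m_i,m'_{i+1})$, and then splice in a canonical adjacency matrix of $G_r$ as a middle diagonal block with a single $1$ joining the critical vertex $m_{r-1}$ to $m_r$ — this is precisely the matrix $M$ in the paper's proof. Your explicit diagonal count (edge labels $1,\ldots,m_r$ from $C$, then $m_r+1$ from the connecting edge, then the shifted range from $B$) just spells out what the paper leaves as "easy to see."
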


\begin{proof}
Let $\widetilde{G}_i$ be the double of $G_i$ at $m_i$, i.e, $\widetilde{G}_i$ is obtained by joining the vertex $m_i$ of $G_i$ and the vertex $m^\prime_i$ of $G^\prime_i$ with an edge. Then by Corollary \ref{dgrfl}, $\widetilde{G}_i$ has a complete $\alpha$-labeling with a critical number $m_i$. Again it follows from the proof of Proposition \ref{p1} that the maximum labeled vertex in this $\alpha$-labeling is the vertex $m^\prime_i$ of $G_i$ (if we consider the vertex $m_i$ of $G_i$ in the last row and the vertex $m^\prime_i$ of $G^\prime_i$ in the last column of the matrix $A_1$ in the proof of Proposition \ref{p1}). Now we consider the collection $\set{\widetilde{G}_1,\widetilde{G}_2,\ldots,\widetilde{G}_{r-1}}$ and join them to obtain a complete $\alpha$-labeled graph $H$ (say) with a completely graceful biadjacency matrix, say, $A$ as in Proposition \ref{p3} (here $r$ is replaced by $r-1$). Let $A_r$ be a canonical adjacency matrix of $G_r$. Then it is easy to see that the following matrix $M$ is a completely graceful adjacency matrix of $G$. 
{\footnotesize $$M=\quad\begin{array}{l|llr|llr|lll|}
\multicolumn{1}{c}{} & \multicolumn{1}{c}{0^\prime_1} & & \multicolumn{1}{l}{m_{r-1}} &  \multicolumn{1}{c}{0_r} & & \multicolumn{1}{l}{m_r} & \multicolumn{1}{c}{0_{r-1}} & & \multicolumn{1}{l}{m^\prime_1} \\
\cline{2-10} 
0^\prime_1 & & & & & & & & & \\ 
 & & & & & & & & A & \\
m_{r-1} & & & & & & 1 & & & \\
\cline{2-10}
0_r & & & & & & & & & \\ 
 & & & & & A_r & & & & \\
m_r & & & 1 & & & & & & \\
\cline{2-10}
0_{r-1} & & & & & & & & & \\ 
 & & A^T & & & & & & & \\
m^\prime_1 & & & & & & & & & \\
\cline{2-10}
\end{array}$$}

Hence $G$ is completely graceful.
\end{proof}

\begin{figure}[ht]
\begin{center}
\includegraphics[scale=0.4]{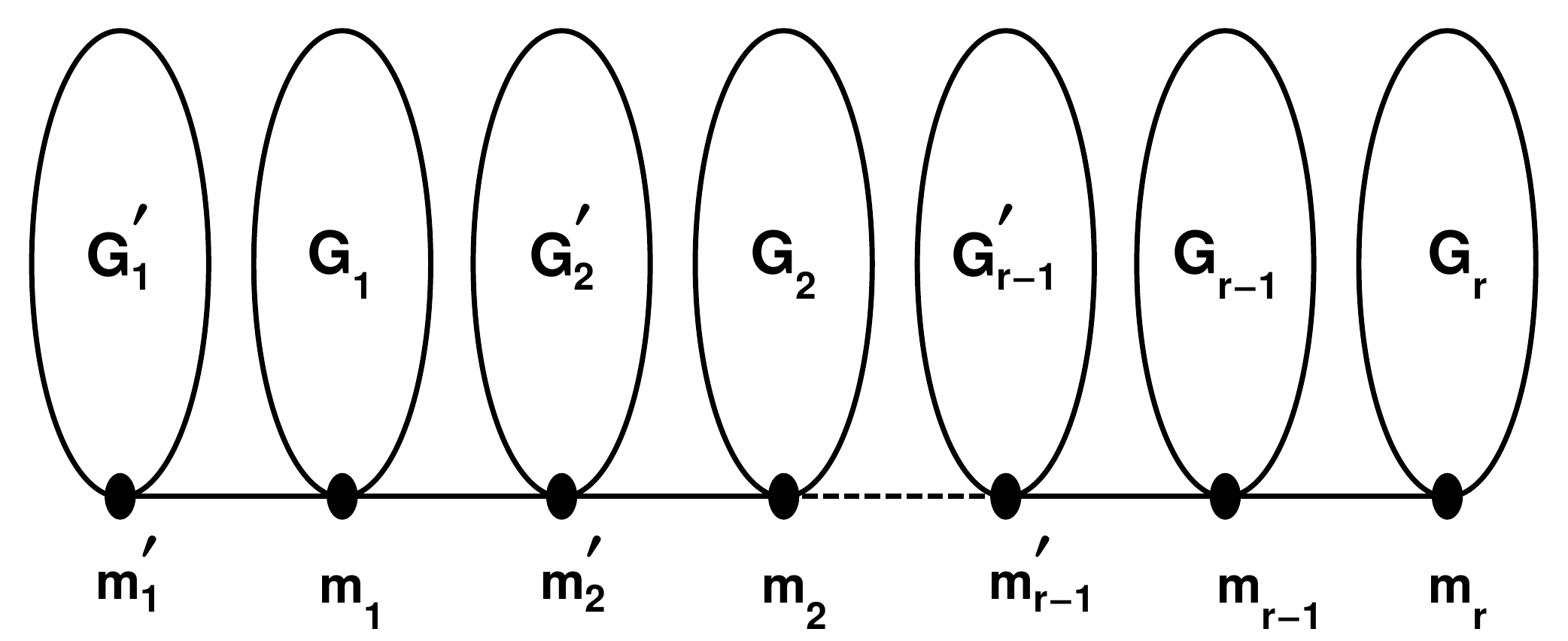}
\caption{The graph $G$ in Corollary \ref{cjgr}}\label{figkm4}
\end{center}
\end{figure}

\begin{prop}\label{newgr11}
Let $\set{G_1,G_2,\ldots,G_r}$ be a collection of completely graceful graphs and $G_i^\prime$ be a copy of $G_i$ for each $i=1,2,\ldots,r-1$ as described in Corollary \ref{cjgr} such that $m_1=m_2=\cdots=m_r$. Let $G=(V,E)$ be a graph obtained by joining a new vertex, say, $v$ with each $m_i$ for $i=1,2,\ldots,r$ and with each $m_i^\prime$ for $i=1,2,\ldots,r-1$ (cf. Figure \ref{figkmv}). Then $G$ is a completely graceful graph with a graceful labeling in which the vertex $v$ has the maximum labeling.
\end{prop}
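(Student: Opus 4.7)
My plan is to build a completely graceful adjacency matrix $\mathbf{M}$ of $G$ by adapting the block-matrix construction from Corollary~\ref{cjgr}: strip out the chain-connecting $1$'s from the matrix there constructed, append a new row and column for $v$, and fill in the star-edge $1$'s in that last row/column; then an appeal to Corollary~\ref{c1} finishes the proof. To set up: since each $G_i$ is completely graceful and (implicitly) connected, it is a tree on $m+1$ vertices (with $m=m_1=\cdots=m_r$), so $G$ is a tree on $N=(2r-1)(m+1)+1$ vertices and $N-1$ edges. For each $i=1,\ldots,r-1$, I form the double $\widetilde{G}_i$ of $G_i$ at $m_i$; by Corollary~\ref{dgrfl} this has a complete $\alpha$-labeling with critical number $m$, and its canonical biadjacency matrix $D_i$ is an $(m+1)\times(m+1)$ completely graceful binary matrix, with last row indexed by $m_i$, last column by $m_i'$, and doubling entry $D_i[m,m]=1$. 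I also use a canonical adjacency matrix $A_r$ of $G_r$. Putting $K=(2r-1)(m+1)$, I start from the $K\times K$ completely graceful adjacency matrix for the chain graph of Corollary~\ref{cjgr}, which is built via Proposition~\ref{p3} from the $D_i$'s placed anti-diagonally, with $A_r$ in the middle diagonal block and the chain-connecting $1$'s glued in as in that proof.

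Next I extend this matrix to an $N\times N$ matrix $\mathbf{M}$ by appending a final row and column for the vertex $v$, to be given label $M=N-1$, and I carry out two modifications: (a) delete all $2r-2$ chain-connecting entries (with their transposes)---the doubling entries for $m_i-m_i'$ ($i=1,\ldots,r-1$), the cross-chain entries for $m_i-m_{i+1}'$ ($i=1,\ldots,r-2$), and the entry for $m_{r-1}-m_r$---since these edges are not present in $G$; and (b) set to $1$ the entries of the last row/column indexed by the $2r-1$ tops $m_i$ ($i=1,\ldots,r$) and $m_i'$ ($i=1,\ldots,r-1$), realizing $v$'s star edges. The result is precisely the adjacency matrix of $G$ in the resulting vertex ordering, with $v$ last.

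The hard step is verifying $\mathbf{M}$ is completely graceful. After the enlargement from $K\times K$ to $N\times N$, the old $1$'s sit at new box-values shifted by $+1$ and so cover every non-principal diagonal of $\mathbf{M}$ except the two extreme ones corresponding to edge label $M$. Modification (a) vacates the $2(2r-2)$ diagonals on which the chain $1$'s used to sit, while modification (b) fills $2(2r-1)$ diagonals; the numerics balance, leaving one extra pair of diagonals for (b) that must be the extreme pair left uncovered by the shift. The hypothesis $m_1=\cdots=m_r$ is what forces the tops $m_i,m_i'$ to occupy positions in the chain matrix such that the star-edge labels $|f(v)-f(m_i)|$ and $|f(v)-f(m_i')|$ match the labels of the deleted chain edges exactly, with one remaining star edge (to the top at position $0$) picking up the extreme label $M$. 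Once every non-principal diagonal carries exactly one $1$, Corollary~\ref{c1} gives that $G$ is graceful, and $v$ has the maximum label $M$ because it sits in the last row/column. The main obstacle is exactly this box-value bookkeeping: identifying the precise positions of the tops in the chain matrix of Corollary~\ref{cjgr} and verifying the matching of deleted and inserted diagonals in detail.
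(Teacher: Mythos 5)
There is a genuine gap, and it sits exactly at the step you defer as ``box-value bookkeeping'': for the matrix you start from, the matching of deleted and inserted diagonals fails. Write $n=m+1$ for the common number of vertices of the $G_i$. In the completely graceful matrix of Corollary \ref{cjgr} (built via Proposition \ref{p3} from the doubles, with each block's vertices in \emph{increasing} label order), the tops sit at positions $f(m_i)=in-1$ for $i=1,\dots,r$ and $f(m_i')=(2r-i)n-1$ for $i=1,\dots,r-1$, i.e.\ at positions $jn-1$, $j=1,\dots,2r-1$; the deleted chain/doubling edges have labels exactly $\set{tn : t=1,\dots,2r-2}$, and the internal edges of the pieces fill all the remaining labels $1,\dots,(2r-1)n-1$. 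Appending $v$ at position $(2r-1)n$ then gives the star edges the labels $(2r-1)n-(jn-1)=(2r-1-j)n+1$, all congruent to $1\pmod n$. These are not the freed labels (which are multiples of $n$): e.g.\ the edge $v\,m_1'$ gets label $1$, which is already carried by an edge of $G_r$, and no star edge ever attains the required maximum label $(2r-1)n$ because no top sits at position $0$. So the counting balances ($2r-2$ freed plus one new extreme equals $2r-1$ needed) but the diagonals do not line up, and the resulting matrix is not completely graceful.

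The paper's proof avoids this by never passing through the chain graph at all: it assembles the disjoint pieces directly, placing the \emph{reversed} blocks $A_i^R$ anti-diagonally (so that each top $m_i$, $m_i'$ occupies the \emph{first} position of its block, i.e.\ a position divisible by $n$), puts $A_r^R$ in the middle, and adds the last row/column for $v$. Then the star edges receive precisely the labels $n,2n,\dots,(2r-1)n$, which lie on the principal diagonals of the blocks and on the ``in-between'' diagonals --- all empty since there are no chain or doubling edges to begin with; this is exactly where the hypothesis $m_1=\cdots=m_r$ is used. Your construction can be repaired in the same spirit: either reverse the vertex order inside each block before appending $v$, or append $v$ at position $0$ (where the star labels become $jn$ and do match the freed multiples of $n$ together with the new maximum) and then pass to the complementary labeling $f\mapsto (2r-1)n-f$ to make $v$ maximum. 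As written, though, the key verification fails rather than merely being tedious.
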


\begin{proof}
Let $A_i$ be a canonical adjacency matrix of $G_i$ for each $i=1,2,\ldots,r$. Then each $A_i$ is completely graceful and is of same order by the given condition for $i=1,2,\ldots,r$. Consider the following matrix $B$:
{\footnotesize $$B=\quad\begin{array}{c|llr|llr|llr|lll|}
\multicolumn{1}{c}{} & \multicolumn{1}{c}{m_{r-1}^\prime} & & \multicolumn{1}{l}{0_{r-1}} & & \cdots & \multicolumn{1}{r}{} & \multicolumn{1}{c}{m_2^\prime} & & \multicolumn{1}{l}{0_2} & \multicolumn{1}{c}{m_1^\prime} & & \multicolumn{1}{l}{0_1} \\
\cline{2-13} 
m_1 & & & & & & & & & & & & \\ 
 & & & & & \cdots & & & & & & A_1^R & \\
0_1^\prime & & & & & & & & & & & & \\
\cline{2-13}
m_2 & & & & & & & & & & & & \\ 
 & & & & & \cdots & & & A_2^R & & & & \\
0_2^\prime & & & & & & & & & & & & \\
\cline{2-13}
& & & & & & & & & & & & \\ 
\vdots & & \vdots & & & \reflectbox{$\ddots$} & & & \vdots & & & \vdots & \\
 & & & & & & & & & & & & \\
\cline{2-13}
m_{r-1} & & & & & & & & & & & & \\ 
 & & A_{r-1}^R & & & \cdots & & & & & & & \\
0_{r-1}^\prime & & & & & & & & & & & & \\
\cline{2-13}
\end{array}$$}

Then a completely graceful adjacency matrix $M$ of the graph $G$ (cf. Figure \ref{figkmv}) is given below, where the row [column] corresponding to $m_i$ has all its entries $0$ except in the last column [respectively, row] corresponding to the vertex $v$ for all $i=1,2,\ldots,r$, where it is $1$ and the row [column] corresponding to $m_i^\prime$ has all its entries $0$ except in the last column [respectively, row] corresponding to the vertex $v$ for all $i=1,2,\ldots,r-1$, where it is $1$. Moreover these $1$'s are not conflicting with any $1$ in the matrix $B$ as they are lying along those diagonals of $M$ which are either principal diagonals of $A_j^R$ ($j=1,2,\ldots,r-1$) or diagonals formed in between the left bottom corner position of $A_j^R$ and right top corner position of $A_{j+1}^R$ ($j=1,2,\ldots,r-2$) as $m_1=m_2=\cdots=m_r$. For example, the $1$ in the last column of the row corresponding to $m_2$ is along the principal diagonal of $A_1^R$ and the $1$ in the last column of the row corresponding to $m_2$ is along the diagonal of $B$ which begins at the position $m_1m_2^\prime$ and ends at the position $0_2^\prime 0_1$.
 
{\footnotesize $$M=\quad\begin{array}{l|lcrcr|llr|lcccr|c|}
\multicolumn{1}{c}{} & m_1 & m_2 & \cdots & m_{r-1} & \multicolumn{1}{l}{0_{r-1}^\prime} &  m_r & & \multicolumn{1}{l}{0_r} & m_{r-1}^\prime & \cdots & m_2^\prime & m_1^\prime & \multicolumn{1}{l}{0_1} & \multicolumn{1}{l}{v}\\
\cline{2-15} 
m_1 & 0 & & & & & & & & & & & & & 1\\
m_2 & & & & & & & & & & & & & & 1\\ 
\vdots & & & & & & & & & & & B & & &\\
m_{r-1} & & & & & & & & & & & & & & 1\\
0_{r-1}^\prime & & & & & 0 & & & & & & & & &\\
\cline{2-15}
m_r & & & & & & 0 & & & & & & & & 1\\ 
 & & & & & & & A_r^R & & & & & & &\\
0_r & & & & & & & & 0 & & & & & &\\
\cline{2-15}
m_{r-1}^\prime & & & & & & & & & 0 & & & & & 1\\ 
\vdots & & & & & & & & & & & & & &\\ 
m_2^\prime & & & B^T & & & & & & & & & & & 1\\
m_1^\prime & & & & & & & & & & & & & & 1\\ 
0_1 & & & & & & & & & & & & & 0 &\\
\cline{2-15}
v & 1 & 1 & \cdots & 1 & & 1 & & & 1 & \cdots & 1 & 1 & & 0\\
\cline{2-15}
\end{array}$$}
\end{proof}

\vspace{-1.5em}
\begin{figure}[h]
\begin{center}
\includegraphics[scale=0.3]{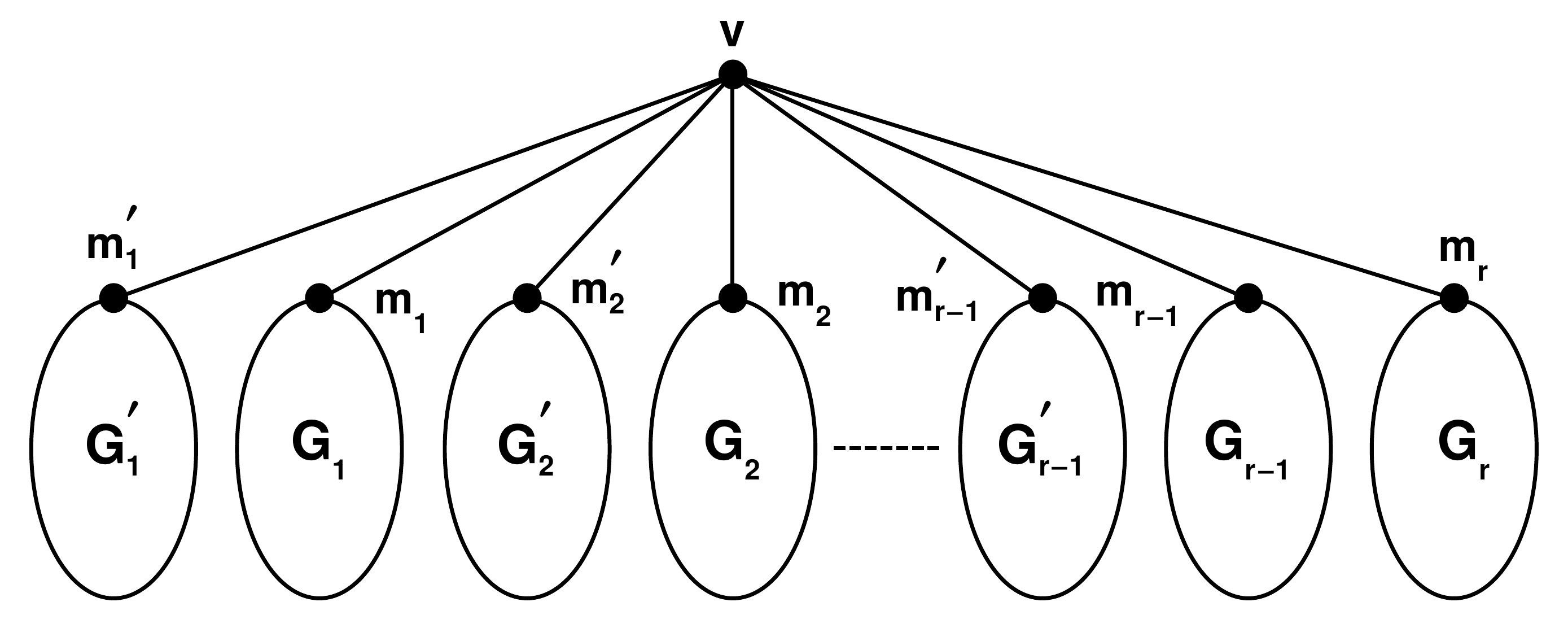}
\caption{The graph $G$ in Proposition \ref{newgr11}}\label{figkmv}
\end{center}
\end{figure}

\vspace{-1.5em}
A tree $T$ is an {\em $F$-tree} if there exists a vertex $v$ of $T$ such that all the branches of $T$ in $v$ are isomorphic, except possibly one branch and each of these branches is a caterpillar. Every $F$-tree is known to be graceful \cite{Rosa}. This result is a special case of Proposition \ref{newgr11}, considering all $G_i$ are isomorphic caterpillars and the fact that any caterpillar can be joined to a graceful tree with the maximum labeled vertex \cite{HH,HKR}.

\vspace{1em}
In \cite{SZ}, Stanton and Zarnke described another method of joining a collection of graceful trees in the following way. Let $S$ and $T$ be two given graceful trees with $V(S)=\set{v_1,v_2,\ldots,v_r}$. For each $i=1,2,\ldots,r$, let $T_i$ be an isomorphic copy of $T$. Then a bigger graceful tree $G$ is obtained by merging the maximum labeled vertex of $T_i$ with $v_i$ for each $i=1,2,\ldots,r$ (i.e., `attaching' a copy of $T$ at each vertex of $S$). The following result is a generalization of this one. In this construction we use adjacency matrices.

\begin{prop}\label{grsz}
Let $H$ be a completely graceful graph with $V(H)=\set{v_0,v_1,v_2,\ldots,v_r}$ such that $v_i$ has the label $i$ for $i=0,1,2,\ldots,r$. Let $\set{G_0,G_1,G_2,\ldots,G_r}$ be a collection of completely graceful graphs, where $G_i=(V_i,E_i)$ with $V_i\cap V(H)=\emptyset$, $|E_i|=m$ for all $i=0,1,2,\ldots,r$ and $V_i\cap V_j=\emptyset$ for all $i\neq j$ such that $G_i\cong G_{r-i}$ for $i=0,1,2,\ldots,\lfloor\frac{r}{2}\rfloor$. Let us denote a vertex in the graph $G_i$ with the label $\lambda$ by $\lambda_i$ for $i=0,1,2,\ldots,r$. Let $G$ be the graph obtained by merging the vertex $m_i$ with $v_i$ for all $i=0,1,2,\ldots,r$. Then $G$ is a completely graceful graph.
\end{prop}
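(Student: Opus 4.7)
The plan is to construct a completely graceful adjacency matrix $M$ of $G$, assembled from the canonical adjacency matrix $A_H$ of $H$ and canonical adjacency matrices $A_i$ of the $G_i$'s, in the style of Propositions~\ref{p3}, \ref{p4}, and \ref{newgr11}. Since the merging identifies $m_i$ with $v_i$ without adding new edges, $G$ has $N=(r+1)(m+1)$ vertices and $N-1$ edges; hence $M$ will be $N\times N$, symmetric, with zero principal diagonal, and complete gracefulness amounts to each of the $2(N-1)$ non-principal diagonals carrying exactly one $1$. Once such an $M$ is produced, Corollary~\ref{c1} yields that $G$ is graceful.

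The key is to exploit $G_i\cong G_{r-i}$ to pair up the blocks. I would first fix canonical adjacency matrices $A_i$ (each of size $(m+1)\times(m+1)$, with $m_i$ in the last row/column) satisfying $A_i=A_{r-i}$ as matrices, and record the reversed variants $A_i^R$ as alternative canonical adjacency matrices of the same graphs. I would then arrange the vertices of $G$ into a block structure in the spirit of the anti-diagonal scheme used in Proposition~\ref{p3}, placing the $G_i$-data in blocks keyed by $H$'s labeling, using $A_i$ for one block of each pair $\{(i,r-i)\}$ and $A_{r-i}^R$ for the other; the off-diagonal blocks would encode the $H$-edges by carrying single $1$'s at the "merged-vertex" positions exactly when $A_H[i,j]=1$. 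The resulting $M$ is symmetric, zero on the principal diagonal, and encodes precisely the edges of $G$.

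The verification reduces to showing that all the $1$'s of $M$ sit on distinct non-principal diagonals. The $H$-edge $1$'s produce box-values in $M$ of the form $N+(\text{shift})(m+1)$, and by complete gracefulness of $A_H$ they occupy exactly the $2r$ "multiple-of-$(m+1)$" non-principal diagonals. The $G_i$-edge $1$'s, by the pairing of $A_i$ with $A_{r-i}^R$ across suitably offset block positions, are designed to cover the remaining $2(r+1)m$ non-principal diagonals exactly once. The main obstacle, which I expect to be the most delicate part of the proof, is verifying this non-collision across the paired $(A_i,A_{r-i}^R)$ blocks: the hypothesis $G_i\cong G_{r-i}$ is essential here because it is precisely the matching that lets the reversed structure of $A_{r-i}^R$ in one block align with the unreversed structure of $A_i$ in its paired block so that together they fill complementary diagonals of $M$ rather than overlapping. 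Once this bookkeeping is complete and the $2(N-1)$ $1$'s of $M$ are seen to occupy $2(N-1)$ distinct non-principal diagonals, $M$ is completely graceful and the proposition follows.
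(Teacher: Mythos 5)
Your overall strategy is the paper's: assemble an $N\times N$ matrix ($N=(r+1)(m+1)$) in an anti-diagonal block scheme, let the $H$-edges occupy the $2r$ non-principal diagonals with box-values $N\pm t(m+1)$ (which the complete gracefulness of $H$'s canonical matrix $B$ handles exactly as you say), and let the $G_i$-blocks cover the remaining $2(r+1)m$ diagonals. Your vertex/edge and diagonal counts are correct. However, the step you defer as ``the most delicate part'' is the entire content of the proof, and the mechanism you propose for it does not work as stated. In the anti-diagonal scheme the two blocks of the pair $\set{i,r-i}$ sit at transposed positions, namely (row-block $i$, column-block $r-i$) and (row-block $r-i$, column-block $i$); since $M$ must be symmetric to be an adjacency matrix, whatever occupies one of these positions forces its transpose into the other, so you cannot independently put $A_i$ in one and $A_{r-i}^R$ in the other unless $A_i^R=A_i^T=A_i$, which fails in general. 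The paper instead places the \emph{same} matrix $A_i=A_{r-i}$ (equality of canonical adjacency matrices, available precisely because $G_i\cong G_{r-i}$) in both positions; that equality, not any alignment of reversed versus unreversed structure, is where the isomorphism hypothesis is used. No reversal $A^R$ appears in this proof at all.

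Once the placement is fixed, the non-collision you flag as the main obstacle is in fact routine and involves no interplay between the two blocks of a pair: the block at (row-block $i$, column-block $r-i$) contributes box-values $N+(r-2i)(m+1)\pm\ell$ for $\ell=1,\ldots,m$, each exactly once by the complete gracefulness of $A_i$, and its partner contributes $N-(r-2i)(m+1)\pm\ell$. Because every block has the same size $m+1$ --- this is where the hypothesis $|E_i|=m$ for all $i$ enters, and your write-up never invokes it --- the centres $N+(r-2i)(m+1)$, $i=0,1,\ldots,r$, are spaced exactly $2(m+1)$ apart, so the bands $[c-m,c+m]\setminus\set{c}$ are pairwise disjoint and their union misses precisely the multiple-of-$(m+1)$ diagonals, which the $2r$ entries $b_{ij}$ at the $(m_i,m_j)$ positions then fill once each. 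Your sketch needs an explicit block placement together with this computation to become a proof; as written it names the right target but supplies neither the correct pairing device nor the verification.
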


\begin{proof}
Let $A_i$ be a canonical adjacency matrix of $G_i$ for each $i=0,1,2,\ldots,r$. Then each $A_i$ is completely graceful and is of the same order by the given condition for $i=0,1,2,\ldots,r$. Let $B=(b_{ij})_{i,j=0}^r$ be a canonical adjacency matrix of $H$. Consider the following matrix $A$.
{\footnotesize $$A=\quad\begin{array}{c|lcr|lcr|lcr|lcr|lcr|}
\multicolumn{1}{c}{} & 0_r & & \multicolumn{1}{l}{m_r} & & \cdots & \multicolumn{1}{r}{} & 0_2 & & \multicolumn{1}{r}{m_2} & 0_1 & & \multicolumn{1}{l}{m_1} & 0_0 & & \multicolumn{1}{l}{m_0} \\
\cline{2-16} 
0_0 & & & & & & & & & & & & & & & \\ 
 & & & & & \cdots & & & & & & & & & \mathbf{A_0} & \\
m_0 & & & b_{00} & & & & & & b_{0\,(r-2)}& & & b_{0\,(r-1)}& & & b_{0r}\\
\cline{2-16}
0_1 & & & & & & &  & & & & & & & & \\ 
 & & & & & \cdots & & & & & & \mathbf{A_1} & & & & \\
m_1 & & & b_{10} & & & & & & b_{1\,(r-2)}& & & b_{1\,(r-1)}& & & b_{1r}\\
\cline{2-16}
0_2 & & & & & & &  & & & & & & & & \\ 
 & & & & & \cdots & & & \mathbf{A_2} & & & & & & & \\
m_2 & & & b_{20} & & & & & & b_{2\,(r-2)}& & & b_{2\,(r-1)}& & & b_{2r}\\
\cline{2-16}
& & & & & & & & & & & & & & & \\ 
\vdots & & \vdots & & & \reflectbox{$\ddots$} & & & \vdots & & & \vdots & & & \vdots & \\
 & & & & & & & & & & & & & & & \\
\cline{2-16}
0_r & & & & & & & & & & & & & & & \\ 
 & & \mathbf{A_r} & & & \cdots & & & & & & & & & & \\
m_r & & & b_{r0} & & & & & & b_{r\,(r-2)}& & & b_{r\,(r-1)}& & & b_{rr}\\
\cline{2-16}
\end{array}$$}
As in the proof of Proposition \ref{newgr11}, here also the $1$'s in the matrix $B$ are not conflicting with any $1$ in submatrices $A_0,A_1,\ldots,A_r$. Also since $B$ is completely graceful, $A$ is a completely graceful adjacency matrix of $G$.
\end{proof}

We illustrate the above proposition by an example.

\vspace{-0.5em}
\begin{figure}[ht]
\begin{center}
\includegraphics[scale=0.5]{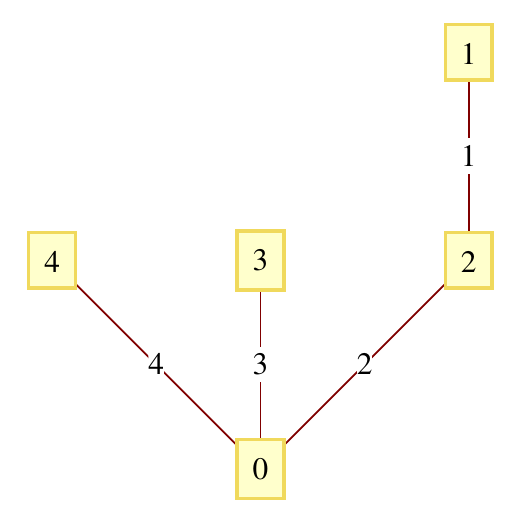}\ \includegraphics[scale=0.5]{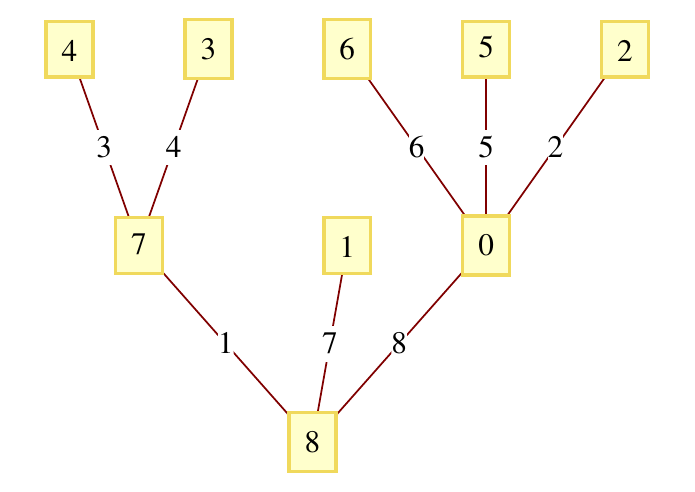}\ \includegraphics[scale=0.5]{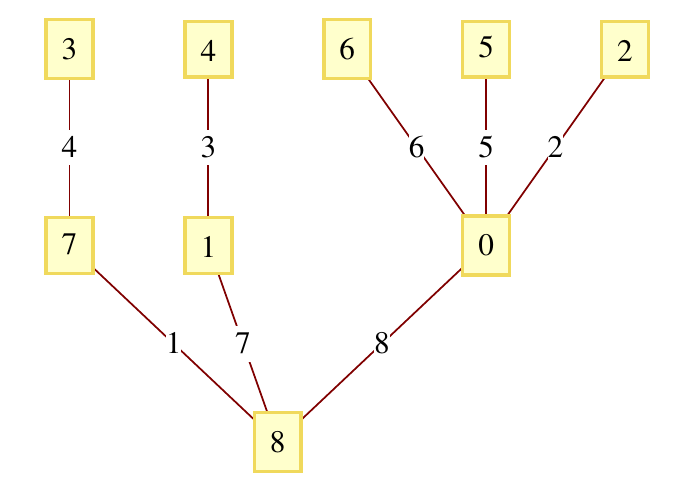}\ \includegraphics[scale=0.5]{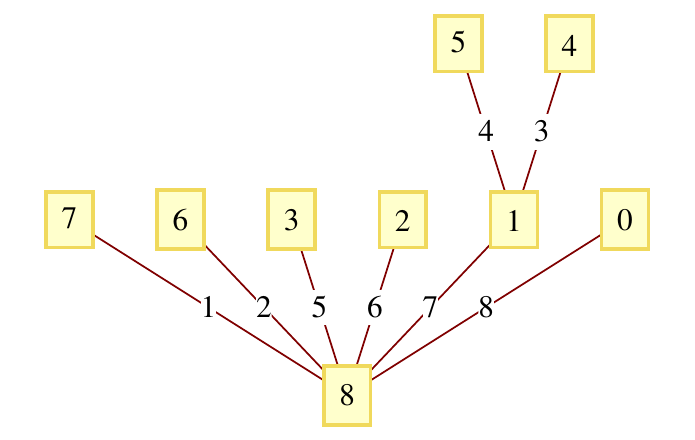}\ 

\vspace{-1em}
\null\hspace{-0.3in} (i) \hspace{1in} (ii) \hspace{1in} (iii) \hspace{1.2in} (iv)
\caption{The tress (i) $H$, (ii) $G_0$, (iii) $G_1$ and (iv) $G_2$ in Example \ref{exgrsz}}\label{figsz}
\end{center}
\end{figure}

\begin{exmp}\label{exgrsz}
Let $H,G_0,G_1,G_2$ be the graceful trees (i), (ii), (iii), (iv) in Figure \ref{figsz} respectively and $G_4\cong G_0, G_3\cong G_1$. The canonical adjacency matrices $A_i$ of $G_i$ ($i=0,1,2$) are given by

{\tiny $A_0=\begin{array}{c|ccccccccc|}
\multicolumn{1}{c}{} & 0 & 1 & 2 & 3 & 4 & 5 & 6 & 7 & \multicolumn{1}{c}{8}\\
\cline{2-10}
0 & 0 & 0 & 1 & 0 & 0 & 1 & 1 & 0 & 1 \\
1 & 0 & 0 & 0 & 0 & 0 & 0 & 0 & 0 & 1 \\
2 & 1 & 0 & 0 & 0 & 0 & 0 & 0 & 0 & 0 \\
3 & 0 & 0 & 0 & 0 & 0 & 0 & 0 & 1 & 0 \\
4 & 0 & 0 & 0 & 0 & 0 & 0 & 0 & 1 & 0 \\
5 & 1 & 0 & 0 & 0 & 0 & 0 & 0 & 0 & 0 \\
6 & 1 & 0 & 0 & 0 & 0 & 0 & 0 & 0 & 0 \\
7 & 0 & 0 & 0 & 1 & 1 & 0 & 0 & 0 & 1 \\
8 & 1 & 1 & 0 & 0 & 0 & 0 & 0 & 1 & 0 \\
\cline{2-10}
\end{array}$,\ $A_1=\begin{array}{c|ccccccccc|}
\multicolumn{1}{c}{} & 0 & 1 & 2 & 3 & 4 & 5 & 6 & 7 & \multicolumn{1}{c}{8}\\
\cline{2-10}
0 & 0 & 0 & 1 & 0 & 0 & 1 & 1 & 0 & 1 \\
1 & 0 & 0 & 0 & 0 & 1 & 0 & 0 & 0 & 1 \\
2 & 1 & 0 & 0 & 0 & 0 & 0 & 0 & 0 & 0 \\
3 & 0 & 0 & 0 & 0 & 0 & 0 & 0 & 1 & 0 \\
4 & 0 & 1 & 0 & 0 & 0 & 0 & 0 & 0 & 0 \\
5 & 1 & 0 & 0 & 0 & 0 & 0 & 0 & 0 & 0 \\
6 & 1 & 0 & 0 & 0 & 0 & 0 & 0 & 0 & 0 \\
7 & 0 & 0 & 0 & 1 & 0 & 0 & 0 & 0 & 1 \\
8 & 1 & 1 & 0 & 0 & 0 & 0 & 0 & 1 & 0 \\
\cline{2-10}
\end{array}$,\ $A_2=\begin{array}{c|ccccccccc|}
\multicolumn{1}{c}{} & 0 & 1 & 2 & 3 & 4 & 5 & 6 & 7 & \multicolumn{1}{c}{8}\\
\cline{2-10}
0 & 0 & 0 & 0 & 0 & 0 & 0 & 0 & 0 & 1 \\
1 & 0 & 0 & 0 & 0 & 1 & 1 & 0 & 0 & 1 \\
2 & 0 & 0 & 0 & 0 & 0 & 0 & 0 & 0 & 1 \\
3 & 0 & 0 & 0 & 0 & 0 & 0 & 0 & 0 & 1 \\
4 & 0 & 1 & 0 & 0 & 0 & 0 & 0 & 0 & 0 \\
5 & 0 & 1 & 0 & 0 & 0 & 0 & 0 & 0 & 0 \\
6 & 0 & 0 & 0 & 0 & 0 & 0 & 0 & 0 & 1 \\
7 & 0 & 0 & 0 & 0 & 0 & 0 & 0 & 0 & 1 \\
8 & 1 & 1 & 1 & 1 & 0 & 0 & 1 & 1 & 0 \\
\cline{2-10}
\end{array}$}

\vspace{1em}\noindent
and $B=$ {\tiny $\begin{array}{c|ccccc|}
\multicolumn{1}{c}{} & 0 & 1 & 2 & 3 & \multicolumn{1}{c}{4}\\
\cline{2-6}
 0 & 0 & 0 & 1 & 1 & 1 \\
 1 & 0 & 0 & 1 & 0 & 0 \\
 2 & 1 & 1 & 0 & 0 & 0 \\
 3 & 1 & 0 & 0 & 0 & 0 \\
 4 & 1 & 0 & 0 & 0 & 0 \\
\cline{2-6}
\end{array}$} be the canonical adjacency matrix of $H$. Let $G$ be the tree obtained 

\vspace{1em}\noindent
by merging the maximum ($8$) labeled vertex of $G_i$ with the vertex labeled $i$ of $H$ for each $i=0,1,2,3,4$ (cf. Figure \ref{exsz2}). Then $G$ is a graceful tree with a completely graceful adjacency matrix $A$, where

{\tiny $$A=\quad\begin{array}{c|c|lcr|lcr|lcr|lcr|lcr|}
\multicolumn{1}{c}{} & \multicolumn{1}{c}{} & 0 & & \multicolumn{1}{l}{8} & 9 & & \multicolumn{1}{r}{17} & 18 & & \multicolumn{1}{r}{26} & 27 & & \multicolumn{1}{l}{35} & 36 & & \multicolumn{1}{l}{44} \\
\cline{3-17}
\multicolumn{1}{c}{} & \multicolumn{1}{c}{} & 0_4 & & \multicolumn{1}{l}{8_4} & 0_3 & & \multicolumn{1}{r}{8_3} & 0_2 & & \multicolumn{1}{r}{8_2} & 0_1 & & \multicolumn{1}{l}{8_1} & 0_0 & & \multicolumn{1}{l}{8_0} \\
\cline{3-17} 
0 & 0_0 & & & & & & & & & & & & & & & \\ 
 & & & & & & & & & & & & & & & \mathbf{A_0} & \\
8 & 8_0 & & & 0 & & & 0 & & & 1 & & & 1 & & & 1\\
\cline{3-17}
9 & 0_1 & & & & & & &  & & & & & & & & \\ 
 & & & & & & & & & & & & \mathbf{A_1} & & & & \\
17 & 8_1 & & & 0 & & & 0 & & & 1 & & & 0 & & & 0 \\
\cline{3-17}
18 & 0_2 & & & & & & &  & & & & & & & & \\ 
& & & & & & & & & \mathbf{A_2} & & & & & & & \\
26 & 8_2 & & & 1 & & & 1 & & & 0 & & & 0 & & & 0 \\
\cline{3-17}
27 & 0_3& & & & & & & & & & & & & & & \\ 
& & & & & & \mathbf{A_1} & & & & & & & & & & \\
35 & 8_3 & & & 1 & & & 0 & & & 0 & & & 0 & & & 0 \\
\cline{3-17}
36 & 0_4 & & & & & & & & & & & & & & & \\ 
 & & & \mathbf{A_0} & & & & & & & & & & & & & \\
44 & 8_4 & & & 1 & & & 0 & & & 0 & & & 0 & & & 0 \\
\cline{3-17}
\end{array}$$}
\end{exmp}

\begin{figure}[ht]
\begin{center}
\includegraphics[scale=0.575]{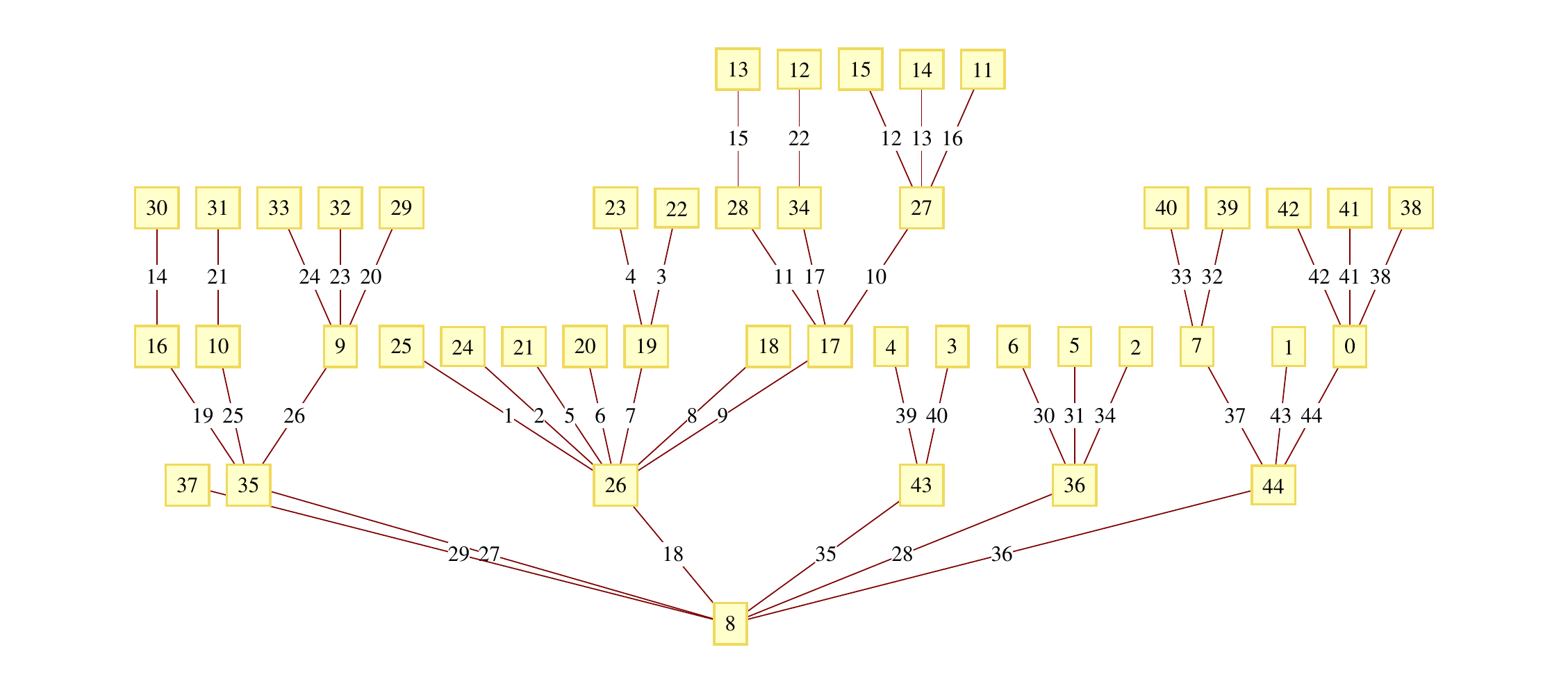} 
\caption{The tree $G$ in Example \ref{exgrsz}}\label{exsz2}
\end{center}
\end{figure}

\begin{rem}\label{rem2}
It is interesting to note that if the completely graceful graph $H$ in Proposition \ref{grsz} has a graceful labeling $f(v_i)=i$, $i=0,1,2,\ldots,r$ such that the neighbors of $v_i$ belong to the set $\Set{v_{r-i-t}}{t\in\set{0,1,2},\ r-i-t\geqslant 0}$ for all $i=0,1,2,\ldots,r$ as in the case of Example \ref{exgrsz}, then the number of vertices of completely graceful graphs $G_0,G_1,G_2,\ldots,G_{\lfloor\frac{r}{2}\rfloor}$ may also be chosen to be different. In fact, Corollary \ref{cjgr} becomes a special case of this when $H$ is a path with $2r-2$ edges and vertices of $H$ are labeled by $(0,2r-2,1,2r-3,2,2r-4,\ldots,r-2,r,r-1)$ which is a graceful labeling.
\end{rem}

\begin{figure}[ht]
\begin{center}
\includegraphics[scale=0.3]{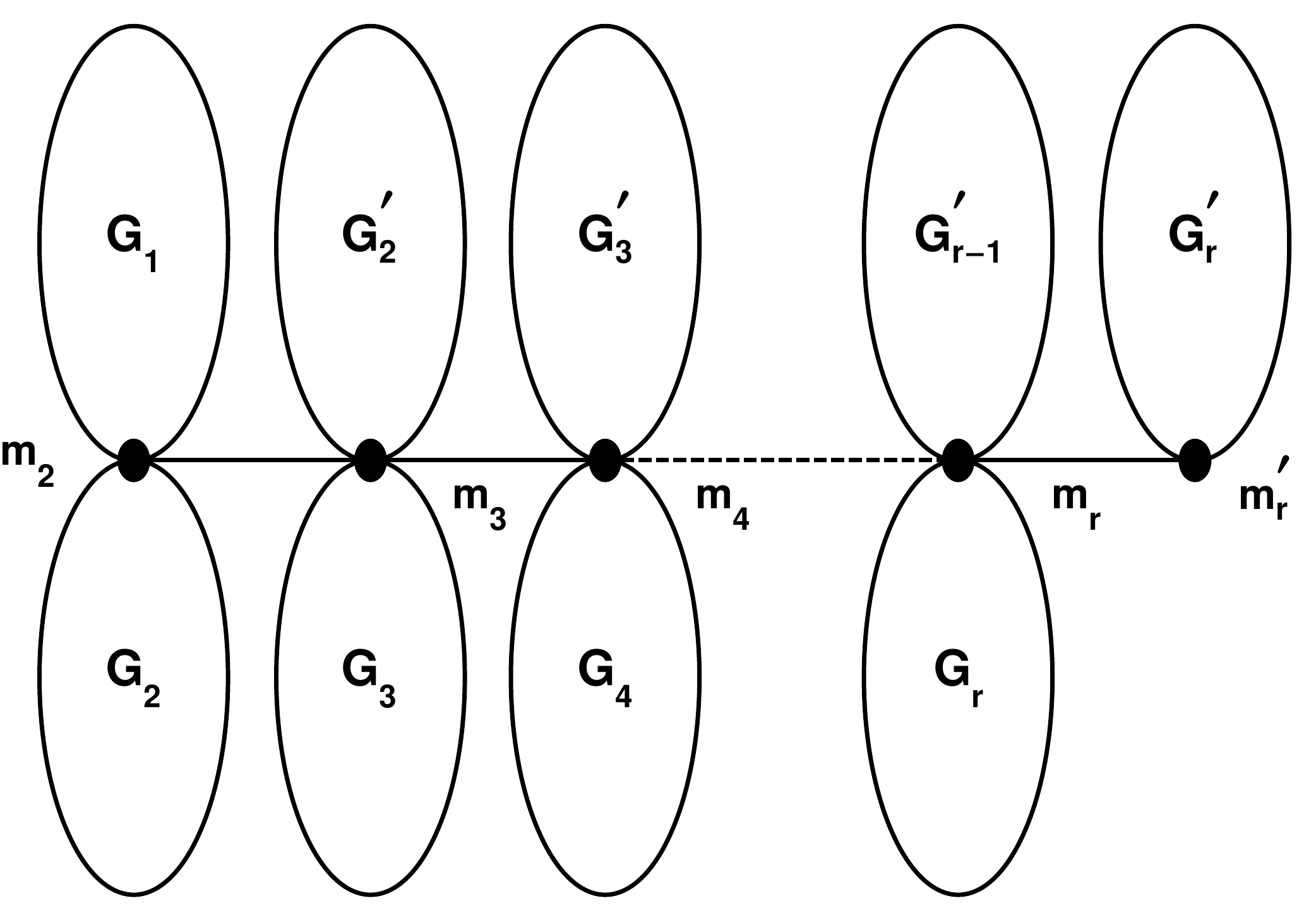}
\caption{The graph $G$ in Proposition \ref{newgr}}\label{figkm5}
\end{center}
\end{figure}

\begin{prop}\label{newgr}
Let $\set{G_1,G_2,\ldots,G_r}$ be a collection of completely graceful graphs as described in Corollary \ref{cjgr}. Let $G=(V,E)$ be a graph obtained by merging the vertex $m_1$ with $m_2$, $m_i^\prime$ with $m_{i+1}$ and joining the vertex $m_r^\prime$ with $m_r$, $m_i$ with $m_{i+1}$ by edges for each $i=2,3,\ldots,r-1$  (cf. Figure \ref{figkm5}). Then $G$ is a completely graceful graph.
\end{prop}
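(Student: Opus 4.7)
The plan is to construct a completely graceful canonical adjacency matrix $M$ for $G$ by combining canonical adjacency matrices of each $G_i$ and each $G_i'$ in a block pattern, in the same spirit as the matrices exhibited in the proofs of Corollary \ref{cjgr} and Proposition \ref{newgr11}. By Corollary \ref{c1}, producing such an $M$ will suffice to conclude that $G$ is a completely graceful (and hence graceful) graph.

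First I would fix a canonical adjacency matrix $A_i$ for each $G_i$. Since every $G_i'$ is an isomorphic copy of $G_i$, it admits an analogous canonical matrix; depending on where a block is placed in $M$, I will use $A_i$, its row-reversal $A_i^R$, its transpose $A_i^T$, or $A_i^{RT}$ from Figure \ref{fig2} to align the $1$'s correctly. I then lay these blocks along the anti-diagonal of $M$ in the order dictated by the chain structure of $G$: the block for $G_1$, then $G_2$, then (with a merged overlap) $G_2'$, then $G_3$, and so on up to $G_r$ and finally $G_r'$. The rows and columns of $M$ are indexed by the labels $0, 1, \ldots, |E(G)|$ of a prospective graceful labeling of $G$.

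The key structural idea is the following. When two graphs are merged at their maximum-labeled vertices (as for $m_1 = m_2$ and $m_i' = m_{i+1}$ with $2 \leqslant i \leqslant r-1$), the two corresponding blocks in $M$ share exactly one row index and one column index, namely those of the single merged vertex; this is analogous to how the doubling edge in the proof of Proposition \ref{p1} occupies a single position on the principal diagonal of the biadjacency matrix there. When two graphs are instead joined by a new edge (the edges $m_i$--$m_{i+1}$ for $2 \leqslant i \leqslant r-1$, and the edge $m_r$--$m_r'$), we insert an extra $1$ at a corner position linking the corresponding blocks, exactly as in the matrices constructed in the proofs of Propositions \ref{p3} and \ref{p4}.

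The main obstacle will be to verify that $M$ is completely graceful, that is, that each non-principal diagonal of $M$ contains exactly one $1$. Inside each block this is immediate, since $A_i$ and its $R$, $T$, $RT$ variants are all completely graceful. Between blocks, the anti-diagonal layout shifts the diagonals of successive blocks by an offset determined by $m_1, m_2, \ldots, m_r$, making the sets of diagonals occupied by distinct blocks pairwise disjoint. The extra $1$'s coming from the new joining edges then fall exactly on the few diagonals that would otherwise be missed (those lying strictly between two consecutive blocks), while each merging identifies one row and one column of consecutive blocks so that the total count of diagonals matches the total number of edges of $G$. Once this verification is in place, Corollary \ref{c1} yields that $G$ is a completely graceful graph, as required.
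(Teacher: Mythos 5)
There is a genuine gap, and it lies exactly where you wave your hands: the claim that the diagonals occupied by distinct blocks are pairwise disjoint is false for the decomposition you chose. You propose to use the canonical adjacency matrix of each individual $G_i$ (and $G_i'$) as a block of $M$. But in an adjacency matrix the edges of $G_i$ necessarily sit inside the symmetric block $I_i\times I_i$, where $I_i$ is the single contiguous index range occupied by $V(G_i)$; such a block straddles the principal diagonal of $M$, and since the block is a completely graceful adjacency matrix of a graph with $m_i$ edges, its $1$'s occupy precisely the box-values $N\pm 1, N\pm 2,\ldots, N\pm m_i$ (with $N$ the order of $M$). Any two of your $2r-1$ pieces therefore each contribute a $1$ of box-value $N+1$, so no arrangement of whole-graph adjacency blocks ``along the anti-diagonal'' can be completely graceful; passing to $A_i^R$, $A_i^T$ or $A_i^{RT}$ does not help, since these keep the block symmetric about the principal diagonal. (Anti-diagonal layouts do work for biadjacency blocks, as in Propositions \ref{p2}--\ref{p4}, but that requires each piece to carry an $\alpha$-labeling, which a merely completely graceful $G_i$ need not have.)

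The paper's proof rests on a reduction you are missing. After the identifications $m_1=m_2$ and $m_i'=m_{i+1}$, each joining edge $m_i m_{i+1}$ (for $i=2,\ldots,r-1$) is precisely the doubling edge $m_i m_i'$ of the double $\widetilde{G}_i=G_i\cup G_i'+m_im_i'$, and $m_rm_r'$ is the doubling edge of $\widetilde{G}_r$; hence $G$ is exactly $G_1,\widetilde{G}_2,\ldots,\widetilde{G}_r$ glued in a chain, maximum-labeled vertex to critical vertex, with no leftover edges to place. By Corollary \ref{dgrfl} each $\widetilde{G}_i$ has a complete $\alpha$-labeling with critical number $m_i$ and maximum label at $m_i'$, so its canonical biadjacency matrix $A_i$ is a completely graceful block which already contains the $1$ for the joining edge and which can legitimately be placed off the principal diagonal of $M$; the chain is then assembled as in Propositions \ref{p3} and \ref{p4}, using the variants $A_i^R$, $A_i^T$, $A_i^{RT}$, with only $G_1$ contributing a principal-diagonal block $A_1^R$. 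Your argument needs this doubling step (or some equivalent device that splits each $V(G_i)\cup V(G_i')$ into two interleaved index groups) before the diagonal bookkeeping you describe can possibly go through.
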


\begin{proof}
Define $\widetilde{G}_i$ as in the proof of Corollary \ref{cjgr} and let $A_i$ be a canonical biadjacency matrix of the complete $\alpha$-labeled graph $\widetilde{G}_i$ for each $i=2,3,\ldots,r$ and $A_1$ be a canonical adjacency matrix of $G_1$. Since $\widetilde{G}_i$ is a double of $G_i$ at $m_i$, there is a $1$ in $A_i$ corresponding to the edge $m_im_i^\prime$ for each $i=2,3,\ldots,r$. Then a completely graceful adjacency matrix $A$ of $G$ is given by
{\footnotesize $$A=\quad\begin{array}{c|c|cc|c|cc|cc|c|cc|cc|c|cc|ccc|c|}
\multicolumn{1}{c}{} & \multicolumn{1}{c}{} &  & \multicolumn{1}{l}{} & \multicolumn{1}{c}{m_3^\prime} &  & \multicolumn{1}{l}{} & & \multicolumn{1}{l}{} & \multicolumn{1}{c}{m_1} &  & \multicolumn{1}{c}{} &  & \multicolumn{1}{l}{} & \multicolumn{1}{l}{m_2^\prime} &  & \multicolumn{1}{l}{} &  & & \multicolumn{1}{l}{m_4^\prime} & \multicolumn{1}{c}{} \\
\multicolumn{1}{c}{} & \multicolumn{1}{c}{} &  & \multicolumn{1}{l}{} & \multicolumn{1}{c}{=} &  & \multicolumn{1}{l}{} & & \multicolumn{1}{l}{} & \multicolumn{1}{c}{=} &  & \multicolumn{1}{c}{} &  & \multicolumn{1}{l}{} & \multicolumn{1}{l}{=} &  & \multicolumn{1}{l}{} &  & & \multicolumn{1}{l}{=} & \multicolumn{1}{c}{} \\
\multicolumn{1}{c}{} & \multicolumn{1}{c}{\cdots} & 0_4^\prime & \multicolumn{1}{l}{} & \multicolumn{1}{c}{m_4} &  & \multicolumn{1}{l}{0_3} & 0_2^\prime & \multicolumn{1}{l}{} & \multicolumn{1}{c}{m_2} &  & \multicolumn{1}{c}{0_1} & 0_2 & \multicolumn{1}{l}{} & \multicolumn{1}{l}{m_3} &  & \multicolumn{1}{l}{0_3^\prime} & 0_4 & & \multicolumn{1}{l}{m_5} & \multicolumn{1}{c}{\cdots} \\
\cline{2-21}
 \vdots & \ddots & & \vdots & \multicolumn{1}{c}{} & \vdots & & & \vdots & \multicolumn{1}{c}{} & \vdots & & \vdots & & \multicolumn{1}{l}{} & \vdots & & & \vdots & & \reflectbox{$\ddots$} \\
\cline{2-21}
0_4^\prime & \cdots & & & \multicolumn{1}{c}{} & & & & & \multicolumn{1}{c}{} & & & & & \multicolumn{1}{l}{} & & & & & & \cdots \\
 & \cdots & & & \multicolumn{1}{c}{} & & & & & \multicolumn{1}{c}{} & & & & & \multicolumn{1}{l}{} & & & & A_4 & & \cdots \\
\cline{2-17}
m_3^\prime = m_4 & \cdots & & & \multicolumn{1}{c}{} & & & & & \multicolumn{1}{c}{} & & & & & \multicolumn{1}{l}{1} & & & & & 1 & \cdots \\
\cline{18-21}
 & \cdots & & & \multicolumn{1}{c}{} & & & & & \multicolumn{1}{c}{} & & & & & \multicolumn{1}{l}{} & A_3^{RT} & & & & & \cdots \\
 0_3 & \cdots & & & \multicolumn{1}{c}{} & & & & & \multicolumn{1}{c}{} & & & & & \multicolumn{1}{l}{} & & & & & & \cdots \\
\cline{2-21}
0_2^\prime & \cdots & & & \multicolumn{1}{c}{} & & & & & \multicolumn{1}{c}{} & & & & \multicolumn{1}{l}{} & & & & & & & \cdots \\
 & \cdots & & & \multicolumn{1}{c}{} & & & & & \multicolumn{1}{c}{} & & & & \multicolumn{1}{l}{A_2} & & & & & & & \cdots \\
\cline{2-12}
m_1=m_2 & \cdots & & & \multicolumn{1}{c}{} & & & & & \multicolumn{1}{c}{0} & & & & \multicolumn{1}{l}{} & 1 & & & & & & \cdots \\
\cline{13-21}
 & \cdots & & & \multicolumn{1}{c}{} & & & & & \multicolumn{1}{c}{} & A_1^R & & & \multicolumn{1}{l}{} & & & & & & & \cdots \\
0_1 & \cdots & & & \multicolumn{1}{c}{} & & & & & \multicolumn{1}{c}{} & & 0 & & \multicolumn{1}{l}{} & & & & & & & \cdots \\
\cline{2-21}
0_2 & \cdots & & & \multicolumn{1}{c}{} & & & & \multicolumn{1}{c}{} & & & & & \multicolumn{1}{l}{} & & & & & & & \cdots \\
 & \cdots & & & \multicolumn{1}{c}{} & & & & \multicolumn{1}{c}{A_2^T} & & & & & \multicolumn{1}{l}{} & & & & & & & \cdots \\
\cline{2-7}
m_2^\prime=m_3 & \cdots & & & \multicolumn{1}{c}{1} & & & & \multicolumn{1}{c}{} & 1 & & & & \multicolumn{1}{l}{} & & & & & & & \cdots \\
\cline{8-21}
 & \cdots & & & \multicolumn{1}{c}{} & A_3^R & & & \multicolumn{1}{c}{} & & & & & \multicolumn{1}{l}{} & & & & & & & \cdots \\
0_3^\prime & \cdots & & & \multicolumn{1}{c}{} & & & & \multicolumn{1}{c}{} & & & & & \multicolumn{1}{l}{} & & & & & & & \cdots \\
\cline{2-21}
0_4 & \cdots & & \multicolumn{1}{c}{} & & & & & \multicolumn{1}{c}{} & & & & & \multicolumn{1}{l}{} & & & & & & & \cdots \\
 & \cdots & & \multicolumn{1}{c}{A_4^T} & & & & & \multicolumn{1}{c}{} & & & & & \multicolumn{1}{l}{} & & & & & & & \cdots \\
m_4^\prime=m_5 & \cdots & & \multicolumn{1}{c}{} & 1 & & & & \multicolumn{1}{c}{} & & & & & \multicolumn{1}{l}{} & & & & & & & \cdots \\
\cline{2-21}
 \vdots & \reflectbox{$\ddots$} & & \multicolumn{1}{c}{\vdots} & & \vdots & & & \multicolumn{1}{c}{\vdots} & & \vdots & & & \multicolumn{1}{l}{\vdots} & & \vdots & & & \vdots & & \ddots \\
\cline{2-21}
\end{array}$$}
\end{proof}

\vspace{-1em}
\section{Certain classes of graceful lobsters}

In general, a lobster has a structure of the graph $L$ in Figure \ref{lob1}(left), where each $F_i$ is a tree of diameter at most $4$. The path $(v_1,v_2,\ldots,v_r)$ is the {\em spine} of the lobster $L$ and each $v_i$ is a {\em spinal vertex} of $L$. Each $F_i$ is a {\em lobe} at the spinal vertex $v_i$ and it has a structure of the graph $F$ in Figure \ref{lob1}(right) (with $v=v_i$), where $s\geqslant 0$, $k_j\geqslant 0$ for all $j=1,2,\ldots,s$. For each $j=1,2,\ldots,s$, the star graph in Figure \ref{lob1}(right) with the central vertex $u_j$ is a {\em branch} at the spinal vertex $v$. For $s=0$, there is no branch at $v$. For any $j=1,2,\ldots,s$, if $k_j=0$, then $u_j$ is a {\em pendant vertex} at $v$. Let $F^\prime_i$ be the tree obtained from $F_i$ by deleting all the pendant vertices at $v_i$. Then $F_i^\prime$ is the {\em reduced lobe} at $v_i$. 

\begin{figure}[h]
\begin{center}
\includegraphics[scale=0.4]{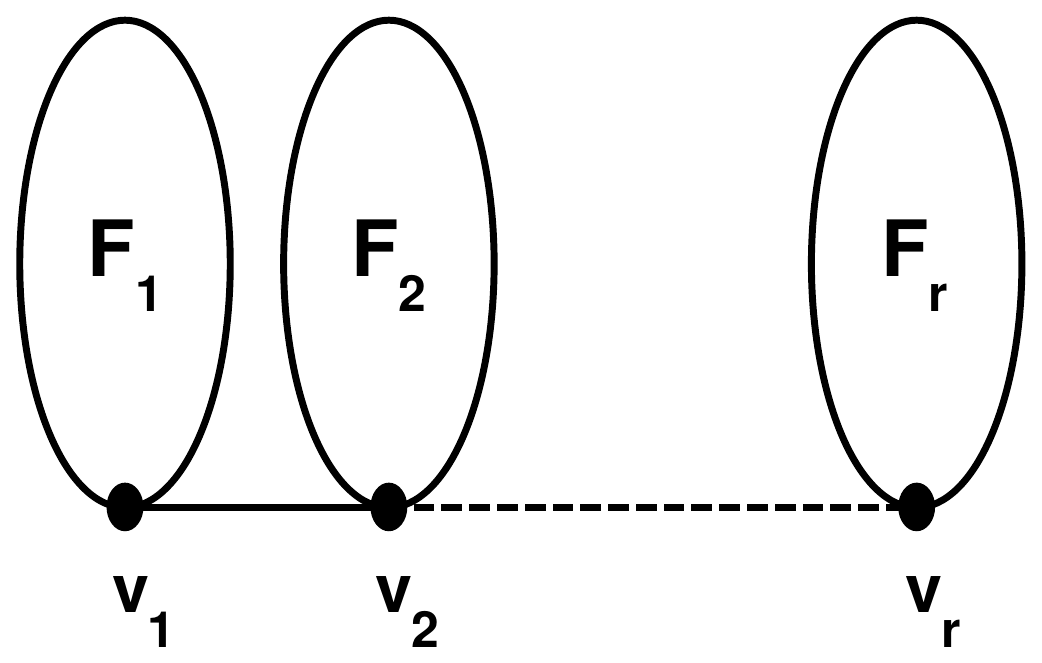}\qquad \includegraphics[scale=0.4]{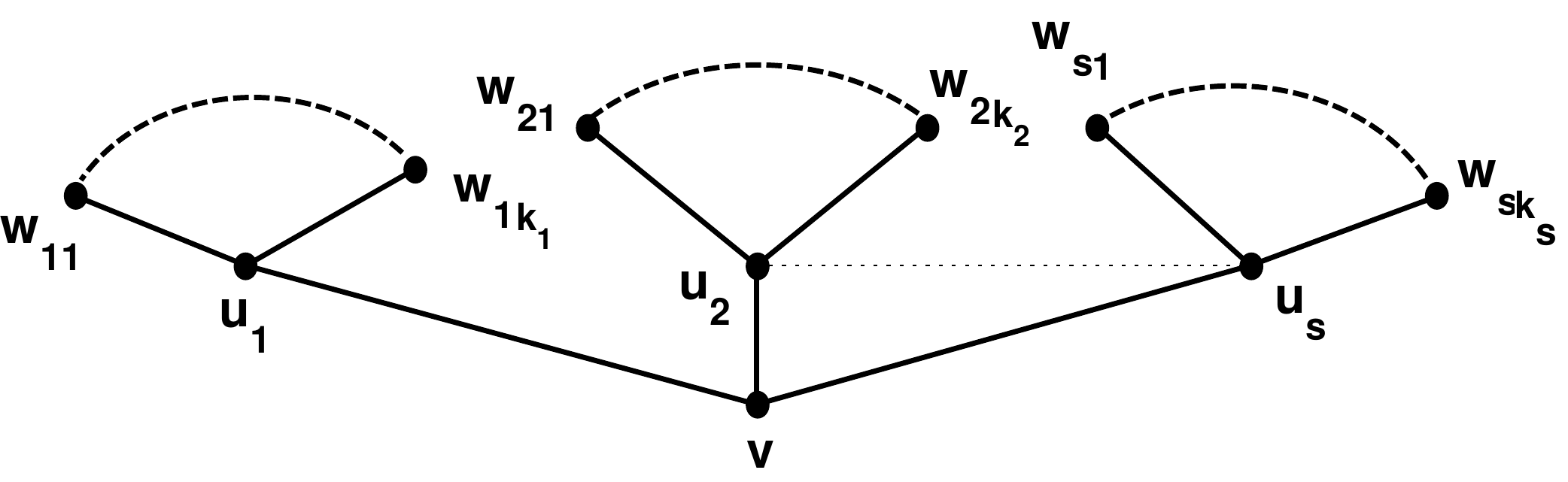}
\caption{A lobster $L$ (left) and a tree $F$ of diameter at most $4$ (right)}\label{lob1}
\end{center}
\end{figure}

\begin{defn}\label{lobd1}
A lobster $L$ in Figure \ref{lob1}(left) is called {\em pairwise isomorphic} if $F_i\cong F_{i+1}$ for all odd $i\in\set{1,2,\ldots,r-1}$. For each $i=1,2,\ldots,r$, let $F^\prime_i$ be the reduced lobe at $v_i$. Then $L$ is called {\em pairwise similar} if $F^\prime_i\cong F^\prime_{i+1}$ for all odd $i\in\set{1,2,\ldots,r-1}$ (cf. Figure \ref{pwslob} for odd $r$. The last lobe will be absent for even $r$). A spinal vertex $v_i$ is called {\em essentially odd} [{\em essentially even}] if the degree of $v_i$ is odd [respectively, even and non-zero] in the tree $F^\prime_i$ for each $i=1,2,\ldots,r$.
\end{defn}

\begin{figure}[ht]
\begin{center}
\includegraphics[scale=0.3]{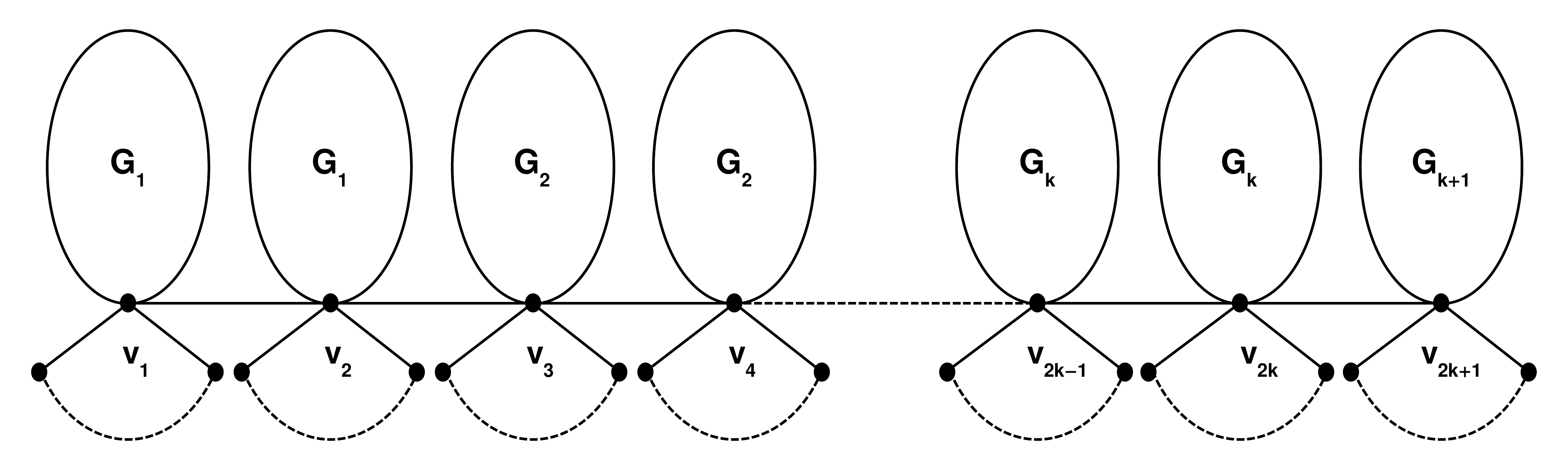}
\caption{A pairwise similar lobster with odd number of spinal vertices}\label{pwslob}
\end{center}
\end{figure}

\begin{thm}\label{lobt1}
A pairwise similar lobster $L$ with essentially odd spinal vertices is graceful.
\end{thm}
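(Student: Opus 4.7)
The plan is to build $L$ pair by pair along its spine, chain the pairs by a zig-zag join via Proposition \ref{p4}, and then restore the pendants at the spinal vertices. For each odd $i$ the pairwise-similar hypothesis gives $F'_i\cong F'_{i+1}$, so the subtree $F'_i\cup F'_{i+1}\cup\{v_iv_{i+1}\}$ is a double of $F'_i$ at its spinal vertex $v_i$ in the sense of Definition \ref{double}. I would first exhibit a graceful labeling of the reduced lobe $F'_i$ (a tree of diameter at most four, i.e., a spider of depth two) in which $v_i$ receives the maximum label $m'_i=|E(F'_i)|$. The essentially odd hypothesis, that the degree of $v_i$ in $F'_i$ is odd, is the parity input that makes such an extremal labeling possible via an alternating assignment along the odd number of branches at $v_i$. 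Corollary \ref{dgrfl} then furnishes a complete $\alpha$-labeling of the reduced paired lobe with critical number $m'_i$, placing $v_i$ at label $m'_i$ and $v_{i+1}$ at label $2m'_i+1$.

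To chain consecutive pairs along the remaining spinal edges $v_{2j}v_{2j+1}$, I would exploit the symmetry of the isomorphism $F'_{2j-1}\cong F'_{2j}$ to choose, for each pair, a complete $\alpha$-labeling in which either $v_{2j-1}$ or $v_{2j}$ plays the critical role. Labeling the $j$-th pair with $v_{2j-1}$ critical for odd $j$ and with $v_{2j}$ critical for even $j$ turns the spinal edge $v_{2j}v_{2j+1}$ into a max-to-max join for odd $j$ and a critical-to-critical join for even $j$, which is precisely the pattern delivered by Proposition \ref{p4}. Applying that proposition to the family of pairs yields a complete $\alpha$-labeling of the reduced lobster $L'$ obtained from $L$ by deleting all pendants at the spinal vertices; when $r$ is odd, the unpaired trailing lobe is appended to the chain in the style of Proposition \ref{p3}.

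Finally, reinstate the pendants that were removed to pass from $L$ to $L'$. In the canonical biadjacency matrix of the $\alpha$-labeled $L'$, each pendant of a spinal vertex $v_i$ contributes a single new row or column (depending on which side of the bipartition $v_i$ lies in) whose unique $1$ sits in the row or column indexed by $v_i$. I would insert these new rows and columns so that the resulting $1$'s land on diagonals of the enlarged matrix that were previously empty, producing a graceful biadjacency matrix in the sense of Definition \ref{d1} and hence, by Theorem \ref{t1}, a graceful labeling of $L$. The essentially odd hypothesis re-enters at this step: it prescribes the parity of the diagonals already occupied by the edges incident to $v_i$ in $F'_i$, leaving the complementary parity class of diagonals free to absorb the pendant $1$'s.

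The main obstacle of the proof is this last step, namely verifying that the pendant $1$'s can indeed be routed into previously empty diagonals of the enlarged biadjacency matrix without conflict, and that the essentially-odd parity condition is the exact combinatorial ingredient making this matrix manipulation succeed. The earlier steps---producing extremal graceful labelings of the reduced lobes and applying the zig-zag join of Proposition \ref{p4}---are relatively mechanical consequences of the machinery developed in Section 3.
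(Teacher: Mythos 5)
Your overall architecture --- strip the pendants at the spinal vertices, give each reduced lobe a graceful labeling with the spinal vertex maximum, realize each isomorphic pair as a double via Corollary \ref{dgrfl}, chain the pairs along the spine with the matrix joins of Section 3, and finally reinstate the pendants --- is exactly the paper's proof. The one genuine difference is the chaining step: you alternate which end of each pair carries the critical label so that the inter-pair spinal edges become the max--max / critical--critical joins of Proposition \ref{p4}, whereas the paper orients every pair the same way and uses the critical-to-max joins of Proposition \ref{p3}, packaged as Corollary \ref{cjgr}. Both patterns realize the spine correctly, so your variant is legitimate for even $r$; but for odd $r$ your plan to append the unpaired trailing lobe ``in the style of Proposition \ref{p3}'' would literally require $F_r^\prime$ to have a complete $\alpha$-labeling with $v_r$ extremal, which is not available for a general diameter-$4$ lobe. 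The paper's Corollary \ref{cjgr} avoids this by splicing the plain canonical \emph{adjacency} matrix $A_r$ of the last lobe between the blocks $A$ and $A^T$, needing only a graceful labeling of $F_r^\prime$ with $v_r$ maximum.

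Two further corrections. First, the extremal labeling of a diameter-$4$ reduced lobe with centre of odd degree is not a routine ``alternating assignment along the branches'': it is Theorem 1 of Hrn\v{c}iar--Haviar \cite{HH}, and this citation is where the essentially-odd hypothesis does \emph{all} of its work (the diameter-$0$ and diameter-$2$ cases being trivial). Second, you have inverted which step is hard. Reinstating the pendants is the routine part and uses no parity condition at all: in the completely graceful matrix of $L^\prime$ each spinal vertex occupies the last row or last column of its block, and one inserts $s_i$ new rows (respectively $t_i$ new columns) at the block boundary, each carrying a single $1$ in that last column (respectively last row); these $1$'s land on previously empty diagonals for \emph{arbitrary} $s_i,t_i$, which is exactly why the paper can close with ``since $s_i$ and $t_i$ are arbitrary, $L$ is graceful.'' So the ``main obstacle'' you identify dissolves, and no parity bookkeeping of occupied diagonals is needed there; the genuinely non-trivial ingredient of the whole argument is the cited extremal labeling of diameter-$4$ trees.
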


\begin{proof}
Let $L$ be a pairwise similar lobster with essetially odd spinal vertices. Let $L^\prime$ be the lobster obtained from $L$ by deleting all the pendant vertices at every spinal vertex of $L$. Now for each odd $i\in\set{1,2,\ldots,r-1}$, consider the reduced lobe $F^\prime_i$ at the spinal vertex $v_i$. Since $F^\prime_i$ is a reduced lobe, the diameter of $F^\prime_i$ can only be $0,2$ or $4$ for otherwise it would invite pendant vertices at the spinal vertex $v_i$ (cf. Figure \ref{lob1}(right)). 
In each of these three cases we show that the following statement is true:

\vspace{0.5em}
{\bf (P1)} $F^\prime_i$ has a graceful labeling such that the label of the spinal vertex is maximum.

\vspace{0.5em}
{\em Case I:} The diameter of $F^\prime_i$ is $0$.

In this case $F^\prime_i$ is the trivial graph with only one vertex, $v_i$. Thus (P1) is true.

\vspace{0.5em}
{\em Case II:} The diameter of $F^\prime_i$ is $2$.

Since $F^\prime_i$ does not contain any pendant vertex at $v_i$, in this case there is only one branch at $v_i$. Then $F^\prime_i$ is a star graph with the central vertex $u_1$. We assign the label $0$ to $u_1$, the label $t$ to $w_{1t}$ for each $t=1,2,\ldots,k_1$ and the label $k_1+1$ to $v_i$ (cf. Figure \ref{lob1}(right) with $v=v_i$). Thus (P1) holds.

\vspace{1em}
{\em Case III:} The diameter of $F^\prime_i$ is $4$.

Then $v_i$ is the central vertex of $F^\prime_i$ (cf. Figure \ref{lob1}(right) with $v=v_i$) and the degree of $v_i$ is odd in $F^\prime_i$ as $v_i$ is essentially odd. Now it follows from the proof of Theorem 1 of \cite{HH}(pg. 137) that every tree of diameter $4$ having the central vertex of an odd degree has a graceful labeling such that the label of the central vertex is maximum. Thus (P1) is true in this case. 

\vspace{0.5em}
Then by Corollary \ref{cjgr}, we have $L^\prime$ is graceful. 

\vspace{0.5em}
Now consider the submatrix $A_i$ of the matrix $A$ in the proof of Corollary \ref{cjgr} (which is also the matrix $A$ in the proof of Proposition \ref{p3}, where $r$ is replaced by $r-1$). In $A_i$ we add $s_i$ number of rows below the row corresponding to $0_i$ with all entries $0$ except those in the last column (i.e., the column corresponding to $m_i$) for which we put $1$. Similarly, we add $t_i$ number of columns after the column corresponding to $k_i+1$ with all entries $0$ except those in the last row (i.e., the row corresponding to $k_i$) for which we put $1$. Suppose the changed matrix be $A^\prime_i$. Then it is clear that $A^\prime_i$ remains to be completely graceful and the graph with adjacency matrix $A^\prime_i$ is obtained from the graph with adjacency matrix $A_i$ by joining $s_i$ one-degree vertices with $m_i$ and $t_i$ one-degree vertices with $k_i$. We do the same for each $i=1,2,\ldots,r-1$ and let the matrix $A$ be changed to $A^\prime$. Finally consider the submatrix $A_r$ of the matrix $M$ in Corollary \ref{cjgr}. We add $t_r$ rows below the row corresponding to $0_r$ and symmetrically $t_r$ columns after the column corresponding to $0_r$ with all entries $0$ excepting $1$ at the last column and the last row respectively. Then the graph with the adjacency matrix $A_r$ is changed to the one with $t_r$ one-degree vertices adjacent to $m_r$. 

\vspace{0.5em} 
Suppose in this way, the matrix $M$ in Corollary \ref{cjgr} is changed to another completely graceful matrix $M^\prime$. Then the graph with adjacency matrix $M^\prime$ will have $s_i$ new one-degree vertices adjacent to $m^\prime_i$ for each $i=1,2,\ldots,r-1$ and $t_i$ new one-degree vertices adjacent to $m_i$ for each $i=1,2,\ldots,r$. Since $s_i$ and $t_i$ are arbitrary, we have $L$ is graceful.
\end{proof}

\begin{cor}\label{lobt2}
A pairwise similar lobster $L$ with at least one pendant vertex at each essentially even spinal vertex is graceful.
\end{cor}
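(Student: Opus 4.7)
The plan is to reduce Corollary \ref{lobt2} to Theorem \ref{lobt1} by absorbing a single pendant vertex at each essentially even spinal vertex into an enhanced version of the reduced lobe, thereby restoring the odd-degree hypothesis on which Theorem \ref{lobt1} depends.

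For each spinal vertex $v_i$, define an enhanced reduced lobe $F_i^*$ as follows. If $v_i$ is essentially even, fix one of the guaranteed pendant vertices $p_i$ at $v_i$ and let $F_i^*$ be $F_i'$ together with the leaf $p_i$ attached to $v_i$; otherwise set $F_i^*=F_i'$. Since the pairwise similarity $F_i'\cong F_{i+1}'$ for odd $i$ forces $v_i$ and $v_{i+1}$ to have the same degree in their reduced lobes, the two are essentially even together or neither is, so $F_i^*\cong F_{i+1}^*$ and pairwise similarity is preserved. The degree of $v_i$ in $F_i^*$ is either zero (possible only when $F_i'$ is trivial) or odd: essentially even degrees gain one through the added leaf, while essentially odd degrees are unchanged. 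Observe that essentially even spinal vertices can only occur when $F_i'$ has diameter $4$, because a diameter-$2$ reduced lobe would make $v_i$ a degree-$1$ leaf. Hence attaching a leaf at the centre $v_i$ does not increase the diameter of $F_i^*$ beyond $4$.

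Next I verify the analogue of property (P1) from the proof of Theorem \ref{lobt1}: each $F_i^*$ admits a graceful labeling in which $v_i$ carries the maximum label. The diameter-$0$, diameter-$2$, and odd-degree diameter-$4$ cases are already settled in the proof of Theorem \ref{lobt1} and apply verbatim since $F_i^*=F_i'$ there. The only genuinely new case is an essentially even diameter-$4$ lobe; here $F_i^*$ is again a diameter-$4$ tree whose centre $v_i$ now has odd degree, so the diameter-$4$ labeling result of \cite{HH} cited in Case III applies directly and yields (P1).

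With (P1) available for every $F_i^*$, the remainder of the argument mirrors that of Theorem \ref{lobt1} step by step: Corollary \ref{cjgr} joins the collection $\{F_1^*,F_2^*,\ldots,F_r^*\}$ along the spine into a completely graceful skeleton, and the leftover pendant vertices of $L$ (one fewer at each essentially even spinal vertex, the other pendants at every $v_i$, and the $t_i$ pendants at the branch centres as before) are reattached by the same row-and-column augmentation that converts $M$ into $M'$ at the end of Theorem \ref{lobt1}. The resulting completely graceful adjacency matrix is that of $L$, so $L$ is graceful. The one delicate point in the plan is ensuring that the enhancement simultaneously preserves pairwise similarity and keeps the diameter bounded by $4$; both are handled by the observation that essentially even spinal vertices arise only at the centres of diameter-$4$ reduced lobes.
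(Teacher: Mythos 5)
Your proposal is correct and follows essentially the same route as the paper: the paper's proof likewise observes that essentially even spinal vertices force diameter-$4$ reduced lobes, absorbs one guaranteed pendant vertex into each such lobe to make the central degree odd, and then repeats the argument of Theorem \ref{lobt1} (including the final reattachment of the remaining pendant vertices). Your write-up merely makes explicit two points the paper leaves implicit, namely that the enhancement preserves pairwise similarity and keeps the diameter at $4$.
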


\begin{proof}
We first note that the reduced lobe at each essentially even spinal vertex must be of diameter $4$ as explained in the proof of Theorem \ref{lobt1}. Then by the given condition we can add one pendant vertex at each essentially even spinal vertex so that it is of odd degree in the tree which is its corresponding reduced lobe along with one pendant vertex. Then as in the proof of Theorem \ref{lobt1}, this modified tree is graceful. Then we can join the remaining pendant vertices (if required) as in the proof of Theorem \ref{lobt1} to obtain the given lobster $L$ and $L$ is graceful. 
\end{proof}

Now we wish to present another class of graceful lobsters. Let $G_1=(V_1,E_1)$ and $G_2=(V_2,E_2)$ be two graceful graphs with maximum labeled vertices $m_1$ and $m_2$ respectively and $V_1\cap V_2=\emptyset$. Then the graph $G$ obtained by identifying (or, merging) the vertices $m_1$ and $m_2$ is denoted by $G_1\circ G_2$ (the process is known as {\em gluing} \cite{HKR} of $G_1$ and $G_2$).

\begin{defn}\label{defgr3}
Consider a lobster $L$ in Figure \ref{lob1}(left). For each $i=1,2,\ldots,r$, let $F^\prime_i$ be the reduced lobe at $v_i$.  Let $\set{G_1,G_2,\ldots,G_r}$ be a collection of trees of diameter $4$ where the central vertex is maximum labeled in a graceful labeling of each $G_i$. Then $L$ is called {\em pairwise linked} if $F_i^\prime\cong G_i\circ G_{i+1}$ for each $i=1,2,\ldots,r-1$ and $F_r^\prime\cong G_r$ (cf. Figure \ref{pwl2}). 
\end{defn}

\begin{thm}\label{lobt3}
Every pairwise linked lobster is graceful.
\end{thm}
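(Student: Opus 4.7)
The plan is to mirror the structure of the proof of Theorem \ref{lobt1}: first invoke one of the chain-joining results of Section 3 to graceful-label the reduced lobster, then re-attach the pendants by an explicit matrix manipulation.

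For the first step, let $L'$ denote the tree obtained from $L$ by deleting every pendant vertex at every spinal vertex, so that the lobe of $L'$ at $v_i$ is precisely the reduced lobe $F_i'$ of Definition \ref{defgr3}. Unpacking $F_i'\cong G_i\circ G_{i+1}$ for $i<r$ and $F_r'\cong G_r$, the tree $L'$ is the union of $2r-1$ diameter-$4$ pieces glued in a chain: one copy of $G_1$ with its central vertex at $v_1$; for each $j$ with $2\le j\le r-1$, one copy of $G_j$ with centre at $v_{j-1}$ and another with centre at $v_j$; two copies of $G_r$, with centres at $v_{r-1}$ and at $v_r$; and the spinal edges $v_iv_{i+1}$ joining consecutive centres. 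A direct comparison shows that this is exactly the graph constructed in Proposition \ref{newgr}: identify the copy of $G_j$ at $v_{j-1}$ with the $G_j$ of that proposition and the copy at $v_j$ with its primed copy $G_j'$, so that the merged vertices $m_1\equiv m_2$ and $m_j'\equiv m_{j+1}$ become $v_1,\ldots,v_{r-1}$ and the vertex $m_r'$ becomes $v_r$; after the merges, the spinal edges $v_iv_{i+1}$ coincide with the edges $m_im_{i+1}$ and $m_rm_r'$ listed in Proposition \ref{newgr}. Hence $L'$ has a completely graceful canonical adjacency matrix $M$ of the explicit block form exhibited in the proof of that proposition.

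For the second step, let $p_i$ be the number of pendants to be attached to $v_i$ in order to recover $L$ from $L'$. I insert into $M$ a block of $p_i$ new rows (or columns, according as $v_i$ lies in the column or row partite class of the underlying $\alpha$-labeling) placed immediately adjacent to the row (or column) indexed by $v_i$; each new row is filled with zeros except for a single $1$ in the column of $v_i$, and symmetrically for a new column. This is precisely the matrix move used in the final two paragraphs of the proof of Theorem \ref{lobt1}. The key point is that in $M$ each spinal vertex $v_i$ sits at a corner of its block, being the maximum-labeled centre of its diameter-$4$ piece; hence the new rows or columns can be placed adjacent to that corner, and a short box-value check confirms that the inserted $1$'s all fall on diagonals previously empty in $M$. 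The resulting enlarged matrix $M'$ is still completely graceful and is a canonical adjacency matrix of $L$, so by Corollary \ref{c1} the tree $L$ is graceful.

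The hard part will be the bookkeeping in the second step. Although the insertion pattern is the one already verified in Theorem \ref{lobt1}, the matrix $M$ from Proposition \ref{newgr} has a considerably more intricate block layout, with submatrices appearing in four possible orientations ($A_i,A_i^R,A_i^T,A_i^{RT}$) and with spinal vertices scattered across several blocks on both sides of the bipartition. The insertions must therefore be performed in a systematic order, say one spinal vertex at a time and completing the row partite class before the column partite class, while tracking how the shifts in box-values induced by earlier insertions propagate through the later blocks. The analysis is essentially routine and parallels the calculation in the proof of Theorem \ref{lobt1}, but it is the only substantive content of the argument beyond the appeal to Proposition \ref{newgr}.
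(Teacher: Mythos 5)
Your proposal follows exactly the paper's argument: delete the pendant vertices at the spinal vertices to obtain $L'$, recognize $L'$ as the chain construction of Proposition \ref{newgr} (via the identifications $m_1\equiv m_2,\ m_j'\equiv m_{j+1}$ becoming the spinal vertices), and then re-attach the pendants by the row/column-insertion technique from the proof of Theorem \ref{lobt1}. The paper states this in three sentences and, like you, leaves the box-value bookkeeping of the final insertion step implicit, so your write-up is a correct (and somewhat more explicit) rendering of the same proof.
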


\begin{proof}
Let $L$ be a pairwise linked lobster. Let $L^\prime$ be the lobster obtained from $L$ by deleting all the pendant vertices at every spinal vertex of $L$. Then by Proposition \ref{newgr}, $L^\prime$ is a graceful lobster. Now using the method similar to the proof of Theorem \ref{lobt1} we can adjoin pendant vertices at the spinal vertices of $L^\prime$ as required so that it remains to be graceful. Thus $L$ is graceful.
\end{proof}

\begin{figure}[h]
\begin{center}
\includegraphics[scale=0.3]{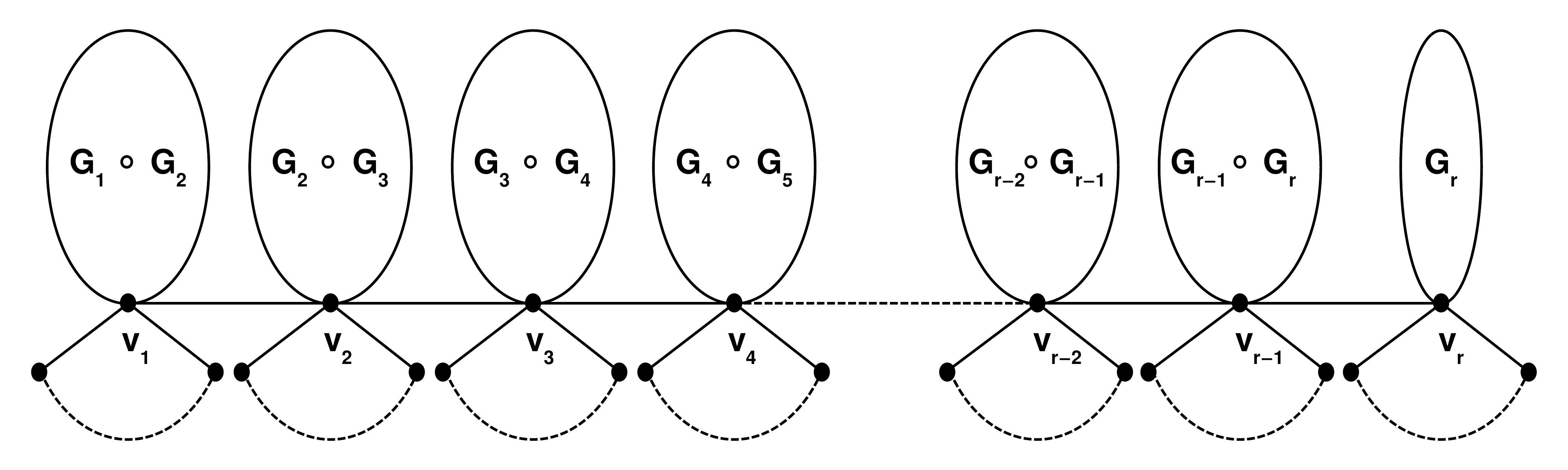}
\caption{A pairwise linked lobster}\label{pwl2}
\end{center}
\end{figure}

\begin{cor}\label{lobcor2}
Let $L$ be a pairwise similar lobster with reduced lobes $F_i^\prime$ at the spinal vertex $v_i$, $i=1,2,\ldots,r$, where $r$ is odd, $v_i$ is essentially even for each $i=1,2,\ldots,r-1$, $F_r^\prime$ consists of a single branch and each $F_i^\prime$ contains a branch isomorphic to $F_r^\prime$ for $i=1,2,\ldots,r-1$ (cf. Figure \ref{pwllob}). Then $L$ is graceful. 
\end{cor}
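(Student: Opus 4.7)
The plan is to recast $L$ as a pairwise linked lobster in the sense of Definition \ref{defgr3} and then invoke Theorem \ref{lobt3}. Set $B := F_r'$, the sole branch at $v_r$. By hypothesis, each $F_i'$ with $i<r$ contains a branch isomorphic to $B$, so I can write $F_i' \cong H_i \circ B$, where $H_i$ is obtained from $F_i'$ by deleting one copy of the $B$-branch (keeping $v_i$ as the central vertex). Pairwise similarity $F_{2j-1}' \cong F_{2j}'$ then forces $H_{2j-1} \cong H_{2j}$ for each $j = 1, \ldots, (r-1)/2$.

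I would then take as building blocks $G_i := B$ for every odd $i \in \set{1,3,\ldots,r-2,r}$ and $G_i := H_i$ for every even $i \in \set{2,4,\ldots,r-1}$. For odd $i < r$, the pair $(i,i+1)$ gives $H_{i+1} \cong H_i$, so $G_i \circ G_{i+1} = B \circ H_{i+1} \cong B \circ H_i \cong F_i'$; for even $i$, $G_i \circ G_{i+1} = H_i \circ B \cong F_i'$; and $G_r = B = F_r'$. Thus the defining relation $F_i' \cong G_i \circ G_{i+1}$ for $i=1,\ldots,r-1$ and $F_r' \cong G_r$ is in place.

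It remains to verify that each $G_i$ is a completely graceful tree with its attachment vertex (the vertex to be identified with the spinal vertex $v_i$) carrying the maximum label. For the star $B$, label its internal vertex $0$, the non-attachment leaves $1,2,\ldots,k_1$, and the attachment vertex $k_1+1$; this is graceful with the required top label. For $H_i$ with even $i$, the degree of $v_i$ in $H_i$ equals $\deg_{F_i'}(v_i) - 1$, which is odd because $v_i$ is essentially even in $F_i'$. So $H_i$ is a tree of diameter at most $4$ whose central vertex has odd degree, and Case III of the proof of Theorem \ref{lobt1} (invoking the labeling of \cite{HH}) supplies a graceful labeling of $H_i$ with $v_i$ at the top. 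Consequently $L$ is pairwise linked, and Theorem \ref{lobt3} yields that $L$ is graceful.

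The main effort is establishing the $v_i$-maximum graceful labelings of the $H_i$; this however reduces at once to the odd-central-degree case already treated in Theorem \ref{lobt1}, because subtracting one branch flips the parity at $v_i$. One minor technicality to flag is that $B$ has diameter $2$ rather than the nominal diameter $4$ of Definition \ref{defgr3}; this is inessential, since Theorem \ref{lobt3} is routed through Proposition \ref{newgr}, which only requires each $G_i$ to be completely graceful with a prescribed maximum-labeled vertex.
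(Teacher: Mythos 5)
Your argument is correct and is essentially the paper's own proof: both decompose each reduced lobe as the gluing of a copy of $F_r^\prime$ with the remainder obtained by deleting that branch, observe that the spinal vertex has odd degree in the remainder (so the labeling from Case III of Theorem \ref{lobt1} applies), and conclude via Theorem \ref{lobt3} that $L$ is pairwise linked and hence graceful. Your remark that the star $F_r^\prime$ has diameter $2$ rather than $4$ is a fair point of care, and the paper handles it the same way, by allowing the single-branch case explicitly.
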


\begin{figure}[h]
\begin{center}
\includegraphics[scale=0.3]{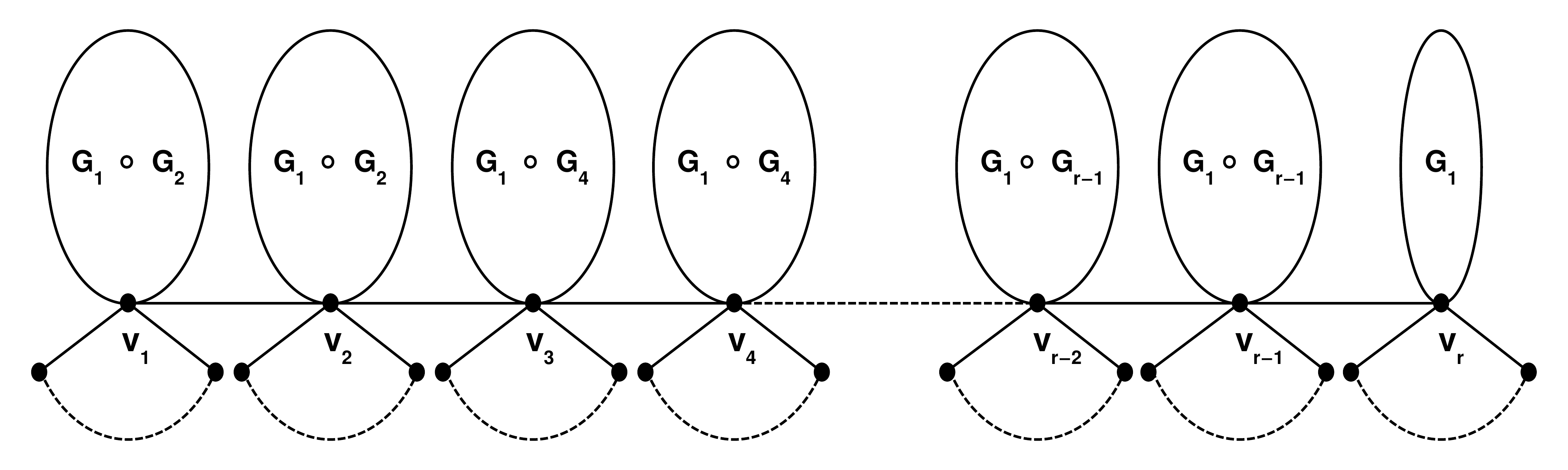}
\caption{The lobster $L$ in Corollary \ref{lobcor2}}\label{pwllob}
\end{center}
\end{figure}

\begin{proof}
For each odd $i\in\set{1,2,\ldots,r-2}$, let $G_{i+1}$ be the tree obtained from $F_i^\prime$ by deleting the branch isomorphic to $F_r^\prime$. Then $\Set{G_{i+1}}{i\text{ is odd, }1\leqslant i\leqslant r-2}$ is a collection of trees where each $G_{i+1}$ is either a tree of diameter $4$ where $v_i$ is the central vertex of $G_{i+1}$ or it is a single branch at $v_i$. In both the cases there is a graceful labeling of $G_{i+1}$ such that $v_i$ has the maximum labeling, as the degree of $v_i$ in $G_{i+1}$ is odd (since $v_i$ is essentially even). Let $G_i$ be isomorphic to $F_r^\prime$ for all odd $i\in\set{1,2,\ldots,r}$. Then $L$ is a pairwise linked lobster with respect to $\set{G_1,G_2,\ldots,G_r}$ (cf. Figure \ref{pwllob}). Thus $L$ is graceful.
\end{proof}

Now let us consider the lobster $G$ of diameter at most $5$ in Figure \ref{pbl}, where the vertex $u_{1i}$ [$u_{2j}$] is adjacent to $x_i$ [respectively, $y_j$] one-degree vertices, $x_i\geqslant 1$, $i=1,2,\ldots,r$, (for $r>0$), $y_j\geqslant 1$, $j=1,2,\ldots,s$ (for $s>0$), the vertex $v_k$  is adjacent to $s_k$ pendant vertices for $k=1,2$ and $r,s,s_1,s_2\geqslant 0$.

\begin{figure}[h]
\begin{center}
\includegraphics[scale=0.25]{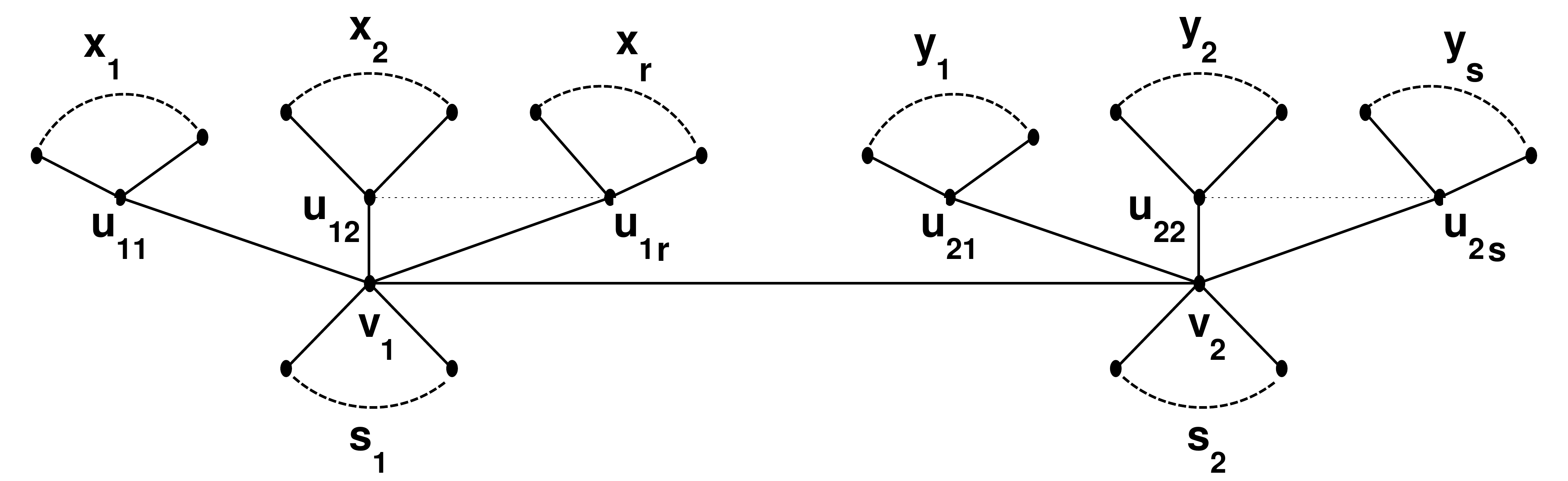}
\caption{A lobster $G$ of diameter at most $5$}\label{pbl}
\end{center}
\end{figure}

\begin{defn}\label{defbal5}
The lobster $G$ in Figure \ref{pbl} is called {\em balanced} if $r=s$ and the following conditions hold (for $r,s>0)$:
\begin{eqnarray}
x_i & = & \left\{%
\begin{array}{ll}
y_{r-\frac{i-1}{2}}, & \text{ when $i$ is odd}\\
x_{\frac{i}{2}}, & \text{ when $i$ is even}
\end{array}\right.\label{eq1}\\
y_i & = & \left\{%
\begin{array}{ll}
x_{r-\frac{i-1}{2}}, & \text{ when $i$ is odd}\\
y_{\frac{i}{2}}, & \text{ when $i$ is even}
\end{array}\right.\label{eq2}
\end{eqnarray}
for all $i=1,2,\ldots,r$. Note that (\ref{eq1}) and (\ref{eq2}) are always satisfied if $x_i=y_i=t$ (say) for all $i=1,2,\ldots,r$. In this case we say that $G$ is {\em trivially balanced}.
\end{defn}

In the following we show that the lobster $G$ in Figure \ref{pbl} has a complete  $\alpha$-labeling if it is balanced.

\begin{lem}\label{pb1}
Let the lobster $G$ in Figure \ref{pbl} be balanced. If $i$ is an odd integer and $j$ is an even integer such that $1\leqslant i,j\leqslant r$. Then {\bf (i)} $\displaystyle{\sum\limits_{t=\frac{i+1}{2}}^i x_t = \sum\limits_{t=r-\frac{i-1}{2}}^r y_t}$, {\bf (ii)}  $\displaystyle{\sum\limits_{t=\frac{i+1}{2}}^i y_t = \sum\limits_{t=r-\frac{i-1}{2}}^r x_t}$,\\
 {\bf (iii)}  $\displaystyle{\sum\limits_{t=\frac{j}{2}+1}^j x_t = \sum\limits_{t=r-\frac{j}{2}+1}^r y_t}$ and {\bf (iv)} $\displaystyle{\sum\limits_{t=\frac{j}{2}+1}^j y_t = \sum\limits_{t=r-\frac{j}{2}+1}^r x_t}$.
\end{lem}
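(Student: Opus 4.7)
The plan is to prove each of the four identities by induction on the running index (odd $i$ in parts (i)--(ii), even $j$ in parts (iii)--(iv)), exploiting the telescoping behavior of the two sides under the balance equations (\ref{eq1}) and (\ref{eq2}). The four parts have parallel structure, so I would write out part (i) in detail and indicate briefly how (ii)--(iv) follow by swapping $x\leftrightarrow y$ or adjusting the parity of the index.

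For part (i), I set
\[
S(i)=\sum_{t=(i+1)/2}^{i}x_t,\qquad T(i)=\sum_{t=r-(i-1)/2}^{r}y_t.
\]
The base case $i=1$ reads $x_1=y_r$, which is precisely the odd case of (\ref{eq1}) at $i=1$. For the inductive step with odd $i\geqslant 3$, a short bookkeeping of the index ranges gives
\[
S(i)-S(i-2)=x_i+x_{i-1}-x_{(i-1)/2},\qquad T(i)-T(i-2)=y_{r-(i-1)/2}.
\]
Since $i-1$ is even, the even case of (\ref{eq1}) yields $x_{i-1}=x_{(i-1)/2}$, so the first difference collapses to $x_i$. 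The odd case of (\ref{eq1}) then gives $x_i=y_{r-(i-1)/2}$, matching the second difference. Hence $S(i)-S(i-2)=T(i)-T(i-2)$, and (i) follows by the induction hypothesis applied at $i-2$.

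For part (iii), the base case $j=2$ reads $x_2=y_r$; this is obtained by chaining the even case of (\ref{eq1}) (with $j=2$) and the odd case (with $i=1$) to get $x_2=x_1=y_r$. The inductive step is entirely analogous to (i): writing $S(j)=\sum_{t=j/2+1}^{j}x_t$ and $T(j)=\sum_{t=r-j/2+1}^{r}y_t$, one computes
\[
S(j)-S(j-2)=x_j+x_{j-1}-x_{j/2},\qquad T(j)-T(j-2)=y_{r-j/2+1}.
\]
Here the even case of (\ref{eq1}) gives $x_j=x_{j/2}$, which cancels $-x_{j/2}$, and the odd case applied to $x_{j-1}$ gives $x_{j-1}=y_{r-(j-2)/2}=y_{r-j/2+1}$, matching the right-hand difference exactly. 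Parts (ii) and (iv) are obtained from (i) and (iii) respectively by interchanging the roles of $x$ and $y$ and invoking (\ref{eq2}) in place of (\ref{eq1}).

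The main obstacle is not conceptual but purely one of notation: one has to verify that the telescoping really does leave a single term on each side whose equality is then supplied by exactly one application of (\ref{eq1}) or (\ref{eq2}). The parity split in the balance equations means that the ``boundary'' term shed on the $x$-side is in the opposite parity class from the term it cancels, so I would be careful to record, for each of the four parts, which of the two cases of (\ref{eq1})--(\ref{eq2}) is being used at each step before claiming the remaining parts by symmetry.
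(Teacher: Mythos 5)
Your proof is correct, but it follows a genuinely different route from the paper's. The paper argues non-inductively: for part (i) it observes that every $t$ in the window $\{\frac{i+1}{2},\ldots,i\}$ has a unique representation $t=j\cdot 2^k$ with $j$ odd, $j\leqslant i$, and that this gives a bijection between the window and the odd integers $\{1,3,\ldots,i\}$; since the even case of (\ref{eq1}) forces $x_t=x_j$, the left-hand sum is identified term by term with $x_1+x_3+\cdots+x_i$, which the odd case of (\ref{eq1}) converts into $y_r+y_{r-1}+\cdots+y_{r-\frac{i-1}{2}}$. For part (iii) the paper similarly rewrites $\sum_{t=1}^{j/2}x_t$ as $x_2+x_4+\cdots+x_j$ and subtracts. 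Your induction with step $2$ replaces this dyadic-window bijection by a purely local telescoping check: each step sheds exactly one boundary term on each side, the even case of (\ref{eq1}) cancels the surplus $x$-term, and the odd case matches what remains. I verified the index bookkeeping in both differences ($S(i)-S(i-2)=x_i+x_{i-1}-x_{(i-1)/2}$, $T(i)-T(i-2)=y_{r-\frac{i-1}{2}}$, and the analogues for even $j$) and the base cases $x_1=y_r$ and $x_2=x_1=y_r$; all applications of (\ref{eq1})--(\ref{eq2}) stay within the admissible index range $1,\ldots,r$. What your approach buys is that it avoids the slightly delicate claim that each odd $j\leqslant i$ has a unique power-of-two multiple in $\{\frac{i+1}{2},\ldots,i\}$, at the cost of obscuring the structural fact the paper's argument makes explicit, namely that the window sums are really sums over odd-index representatives. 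Either proof is acceptable; just make sure, as you note yourself, to record which parity case of the balance equations is used at each step when you dispatch (ii) and (iv) by the $x\leftrightarrow y$ symmetry.
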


\begin{proof}
We first note that for each $j\in\set{1,3,5,\ldots,i}$ (first $\frac{i+1}{2}$ odd natural numbers), there exists a unique $r\in\Nat$ such that $j\cdot 2^r\in\set{\frac{i+1}{2},\frac{i+3}{2},\ldots,i}$ for if $j\cdot 2^{r-1}<\frac{i+1}{2}$, then $j\cdot 2^r\leqslant i$ and if $j\cdot 2^{r+1}>i$, then $j\cdot 2^r\geqslant\frac{i+1}{2}$, i.e., it cannot happen that $j\cdot 2^{k-1}<\frac{i+1}{2}$ and $j\cdot 2^k>i$ for all $k\in\Nat$. Moreover, if $\frac{i+1}{2}\leqslant j\cdot 2^r\leqslant i$, then $j\cdot 2^{r-1}<\frac{i+1}{2}$ and $j\cdot 2^{r+1}>i$. Now since $x_j=x_{j\cdot 2^k}$ for all $k\in\Nat$ such that $j\cdot 2^k\leqslant r$, we have $\set{x_{\frac{i+1}{2}},x_{\frac{i+3}{2}},\ldots,x_i} =\set{x_1,x_3,\ldots,x_i}= \set{y_r,y_{r-1},\ldots,y_{r-\frac{i-1}{2}}}$. This proves (i). Similarly, we have (ii). Again let $j$ be an even integer such that $1<j\leqslant r$. We have $\displaystyle{\sum\limits_{t=1}^{\frac{j}{2}} x_t}$ $=x_1+x_2+\cdots+x_{\frac{j}{2}}$  $=x_2+x_4+\cdots+x_j$ (as $x_t=x_{2t}$ for all $t\geqslant 1$ such that $2t\leqslant r$). Then 

$\displaystyle{\sum\limits_{t=\frac{j}{2}+1}^j x_t=\sum\limits_{t=1}^j x_t - \sum\limits_{t=1}^{\frac{j}{2}} x_t = x_1+x_3+\cdots+x_{j-1} = y_r + y_{r-1} + y_{r-\frac{j}{2}+1} = \sum\limits_{t=r-\frac{j}{2}+1}^r y_t}$.

The proof of (iv) is similar.
\end{proof}

\begin{thm}\label{pb2}
Let the lobster $G$ in Figure \ref{pbl} be balanced . Then $G$ has a complete $\alpha$-labeling with a critical number $k$, the vertex $v_1$ has the maximum labeling $m$ and $v_2$ has the labeling $k$, where  $\displaystyle{k=s_1+r+\sum\limits_{i=1}^r y_i}$ and $\displaystyle{m=s_1+s_2+2r+1+\sum\limits_{i=1}^r (x_i+y_i)}$.
\end{thm}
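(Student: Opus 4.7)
The plan is to construct an explicit complete $\alpha$-labeling $f$ of $G$ with $f(v_1) = m$ and $f(v_2) = k$, and then to invoke Theorem \ref{t2} together with Corollary \ref{c2} to conclude. Since $G$ is a tree, $n = m+1$, so any $\alpha$-labeling of $G$ will automatically be complete.

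First I would fix the bipartition of $G$ forced by $f(v_1) = m$ and $f(v_2) = k$: the low part $V_1 = f^{-1}\set{0,1,\ldots,k}$ must consist of the vertices at even tree-distance from $v_2$, namely $v_2$ itself, the branch centers $u_{1i}$, the $\sum y_j$ leaves hanging off the $u_{2j}$'s, and the $s_1$ pendants at $v_1$; the high part $V_2 = f^{-1}\set{k+1,\ldots,m}$ contains the remaining vertices. A quick count confirms $|V_1| = 1 + r + \sum y_i + s_1 = k+1$ and $|V_2| = 1 + r + \sum x_i + s_2 = m-k$, consistent with the stated values of $k$ and $m$.

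Next I would assign concrete labels block by block. In $V_1$: give the $s_1$ pendants of $v_1$ the labels $0,\ldots,s_1-1$; assign the branch centers $u_{1i}$ the labels $s_1,\ldots,s_1+r-1$ in an order dictated by the balancedness recursion; fill the slots $s_1+r,\ldots,k-1$ with the leaves at the $u_{2j}$'s, grouping them so that the $j$-th group occupies a contiguous block whose position is controlled by partial sums of the $y_t$'s; and set $v_2 \mapsto k$. In $V_2$: assign the $s_2$ pendants of $v_2$ the labels $k+1,\ldots,k+s_2$; place the $u_{2j}$'s in a block of length $r$ in an order mirroring the $u_{1i}$'s; fill the remaining slots with the leaves at the $u_{1i}$'s in contiguous blocks of lengths $x_1,\ldots,x_r$; finally set $v_1 \mapsto m$. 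The precise ordering of the branch centers is chosen so that the differences $m - f(u_{2j})$ and $f(u_{1i}) - k$ interleave to complement each other inside $\set{1,\ldots,m}$.

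The main obstacle is verifying that the induced edge labels are precisely $1, 2, \ldots, m$, each occurring exactly once, or equivalently that the resulting biadjacency matrix is completely graceful in the sense of Definition \ref{d1}. The six families of edge differences (from $v_1$, from $v_2$, and from each $u_{1i}$ or $u_{2j}$) must tile $\set{1,\ldots,m}$, and here Lemma \ref{pb1} plays the decisive role: identities (i)--(iv) equate certain partial sums of $x_t$'s with tail sums of $y_t$'s (and vice versa), which is exactly what makes the block boundaries for the pendant groups align across $V_1$ and $V_2$ so that the corresponding difference classes are disjoint. The balancedness relations (\ref{eq1})--(\ref{eq2}) drive this matching through the chosen ordering of the branch centers. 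The remainder of the argument is a careful combinatorial bookkeeping, splitting into the cases of odd and even indices $i$ (handled by parts (i)--(ii) and (iii)--(iv) of the lemma respectively), to confirm that each diagonal of the biadjacency matrix contains exactly one~$1$.
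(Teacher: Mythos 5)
Your overall strategy (fix the forced bipartition, count that it matches $k+1$ and $m-k$, write down an explicit labeling in blocks, and use Lemma \ref{pb1} to verify gracefulness) is the same as the paper's, but the concrete label layout you specify does not work, and the part you defer as ``careful combinatorial bookkeeping'' is precisely the content of the theorem. The paper does \emph{not} place the branch centers $u_{1i}$ in a contiguous block $s_1,\ldots,s_1+r-1$ followed by the $\beta$-leaves in $s_1+r,\ldots,k-1$; it \emph{interleaves} them: $u_{1r}$ gets $s_1$, then the $y_1$ leaves of $u_{21}$, then $u_{1(r-1)}$, then the $y_2$ leaves of $u_{22}$, and so on (and symmetrically on the high side, alternating $u_{2(r-j)}$ with the $x_{j+1}$ leaves of $u_{1(j+1)}$). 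This interleaving is essential. A minimal counterexample to your layout: take $r=s=2$, $x_1=x_2=y_1=y_2=1$, $s_1=s_2=0$, so $k=4$, $m=9$. Your scheme forces $f(u_{11}),f(u_{12})\in\{0,1\}$, the $\beta$-leaves at $\{2,3\}$, $f(v_2)=4$, $f(u_{21}),f(u_{22})\in\{5,6\}$, the $\alpha$-leaves at $\{7,8\}$, $f(v_1)=9$. The spine-to-center edges then consume labels $\{8,9\}$ and $\{1,2\}$ (and $v_1v_2$ gives $5$), after which the four center-to-leaf edges must realize $\{3,4,6,7\}$; but from centers in $\{0,1\}$ to leaves in $\{7,8\}$ the achievable pairs of differences are $\{7,7\}$ or $\{6,8\}$, and from centers in $\{5,6\}$ to leaves in $\{2,3\}$ they are $\{3,3\}$ or $\{2,4\}$ --- every choice repeats a label or collides with $2$ or $8$. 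So no ordering of the centers rescues your contiguous-block arrangement.

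Beyond the layout error, the proposal never actually carries out the verification that the induced edge labels are exactly $1,\ldots,m$. Saying that Lemma \ref{pb1} ``makes the block boundaries align'' is a correct intuition but not a proof: the paper's argument consists of explicitly computing each family of edge labels (the $v_2$-pendant edges, the spine edges $v_2u_{2(r-j)}$, the leaf edges at $u_{2(r-\frac{j}{2})}$ and $u_{1(\frac{j+1}{2})}$, the edges $u_{1i}v_1$, etc.) and showing, via the four identities of Lemma \ref{pb1} together with relations (\ref{eq1})--(\ref{eq2}), that consecutive entries in the resulting list differ by exactly $1$, so that the labels sweep out $1,\ldots,m$ without repetition. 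That case analysis (odd versus even $j$, odd versus even $r+i$) is where the hypothesis of balancedness is actually used, and it is absent from your write-up. To repair the proof you would need to (a) replace your block order by the interleaved order above, and (b) perform the telescoping computation of the edge labels.
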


\begin{proof}
Consider the lobster $G$ in Figure \ref{pbl}. Let the pendant vertices at $v_i$ be $\gamma_{i1},\gamma_{i2},\ldots,\gamma_{is_i}$ for $i=1,2$, the one-degree vertices adjacent to $u_{1j}$ be $\alpha_{j1},\alpha_{j2},\ldots,\alpha_{jx_j}$ and the one-degree vertices adjacent to $u_{2j}$ be $\beta_{j1},\beta_{j2},\ldots,\beta_{jy_j}$ for all $j=1,2,\ldots,r$. Let us arrange the vertices of $G$ in rows and columns of a matrix $A$ in the following order and label them from $0$ to $k$ according to the increasing order of rows and $k+1$ to $m$ according to the increasing order of columns.

{\footnotesize $$\begin{array}{|c|c|c|c|c|c|c|c|c|c|c|c|}
\hline
\text{Vertices} & \gamma_{11} & \cdots & \gamma_{1s_1} & u_{1r} & \beta_{11} & \cdots & \beta_{1y_1} & u_{1(r-1)} & \beta_{21} & \cdots & \beta_{2y_2}\\
\hline
\text{Labels} & 0 & \cdots & s_1-1 & s_1 & s_1+1 & \cdots & s_1+y_1 & s_1+y_1+1 & s_1+y_1+2 & \cdots & s_1+y_1+y_2+1\\
\hline
\end{array}$$}
\null\hfill $\cdots$ \hfill\null
{\footnotesize $$\begin{array}{|c|c|c|c|c|c|c|}
\hline
\text{Vertices} & \beta_{j1} & \beta_{jy_j} & u_{1(r-j)} & \cdots & \beta_{ry_r} & v_2\\
\hline
\text{Labels} & s_1+j+\sum\limits_{t=1}^{j-1} y_t & s_1+j-1+\sum\limits_{t=1}^{j} y_t & s_1+j+\sum\limits_{t=1}^{j} y_t & \cdots & s_1+r-1+\sum\limits_{t=1}^{r} y_t & k=s_1+r+\sum\limits_{t=1}^{r} y_t\\
\hline
\end{array}$$}
{\footnotesize $$\begin{array}{|c|c|c|c|c|c|c|c|c|c|c|}
\hline
\text{Vertices} & \gamma_{21} & \cdots & \gamma_{2s_2} & u_{2r} & \alpha_{11} & \cdots & \alpha_{1x_1} & u_{2(r-1)} & \cdots & \alpha_{j1}\\
\hline
\text{Labels} & k+1 & \cdots & k+s_2 & k+s_2+1 & k+s_2+2 & \cdots & k+s_2+x_1+1 & k+s_2+x_1+2 & \cdots & k+s_2+j+1+\sum\limits_{t=1}^{j-1} x_t\\
\hline
\end{array}$$}
\null\hfill $\cdots$ \hfill\null
{\footnotesize $$\begin{array}{|c|c|c|c|c|c|}
\hline
\text{Vertices} & \alpha_{jx_j} & u_{2(r-j)} & \cdots & \alpha_{rx_r} & v_1\\
\hline
\text{Labels} & k+s_2+j+\sum\limits_{t=1}^{j} x_t & k+s_2+j+1+\sum\limits_{t=1}^{j} x_t & \cdots & k+s_2+r+\sum\limits_{t=1}^{r} x_t & m=k+s_2+r+1+\sum\limits_{t=1}^{r} x_t\\
\hline
\end{array}$$}

Then by using Lemma \ref{pb1} one can verify (though it is  rigorous) that the above labeling is a complete $\alpha$-labeling of $G$ with the following induced edge labeling where all egde labels are distinct.

{\footnotesize $$\begin{array}{|c|c|c|c|c|c|c|c|c|c|c|}
\hline
\text{Edges} & v_2\gamma_{21} & \cdots & v_2\gamma_{2s_2} & v_2u_{2r} & u_{2r}\beta_{ry_r} & \cdots & u_{2r}\beta_{r1} & v_2u_{2(r-1)} & u_{11}\alpha_{11} & \cdots \\
\hline
\text{Labels} & 1 & \cdots & s_2 & s_2+1 & s_2+2 & \cdots & s_2+y_r+1 & s_2+x_1+2 & s_2+y_r+3 & \cdots \\
 & & & & & & & & =s_2+y_r+2 & & \\
 & & & & & & & (\text{for }r>1) & (\text{ as }x_1=y_r) & & \\
\hline
\end{array}$$}
{\footnotesize $$\begin{array}{|c|c|c|c|c|c|c|}
\hline
\text{Edges} & u_{11}\alpha_{1x_1} & v_2u_{2(r-2)} & u_{2(r-1)}\beta_{(r-1)y_{r-1}} & \cdots & u_{2(r-1)}\beta_{(r-1)1} & \cdots \\
\hline
\text{Labels} & s_2+y_r+x_1+2 & s_2+x_1+x_2+3 & s_2+y_r+x_1+4 & \cdots & s_2+y_r+y_{r-1}+x_1+3 &  \cdots \\
 & & =s_2+y_r+x_1+3 & & & & \\
 & (\text{for }r>2) & (\text{ as }x_1=x_2=y_r) & & & & \\
\hline
\end{array}$$}
{\footnotesize $$\begin{array}{|c|c|c|c|c|}
\hline
\text{Edges} & v_2u_{2(r-j)} & u_{2(r-\frac{j}{2})}\beta_{(r-\frac{j}{2})y_{r-\frac{j}{2}}} & u_{1(\frac{j+1}{2})}\alpha_{(\frac{j+1}{2})1} & \cdots \\
\hline
\text{Labels} & s_2+j+1+\sum\limits_{t=1}^{j} x_t & s_2+j+2+\sum\limits_{t=1}^{\frac{j}{2}} x_t +\sum\limits_{t=r-\frac{j}{2}+1}^r y_t & s_2+j+2+\sum\limits_{t=1}^{\frac{j-1}{2}} x_t +\sum\limits_{t=r-\frac{j-1}{2}}^r y_t & \cdots \\
 & & =s_2+j+2+\sum\limits_{t=1}^j x_t & =s_2+j+2+\sum\limits_{t=1}^j x_t & \\
 & (\text{for }r>j) & (\text{by Lemma \ref{pb1}, for }j \text{ is even}) & (\text{by Lemma \ref{pb1}, for }j \text{ is odd}) & \\
\hline
\end{array}$$}
{\footnotesize $$\begin{array}{|c|c|c|c|c|}
\hline
\text{Edges} & v_2v_1 & u_{2(\frac{r}{2})}\beta_{(\frac{r}{2})y_{\frac{r}{2}}} & u_{1(\frac{r+1}{2})}\alpha_{(\frac{r+1}{2})1} & \cdots \\
\hline
\text{Labels} & s_2+r+1+\sum\limits_{t=1}^{r} x_t & s_2+r+2+\sum\limits_{t=1}^{\frac{r}{2}} x_t +\sum\limits_{t=\frac{r}{2}+1}^r y_t & s_2+r+2+\sum\limits_{t=1}^{\frac{r-1}{2}} x_t +\sum\limits_{t=\frac{r+1}{2}}^r y_t & \cdots \\
 & & =s_2+r+2+\sum\limits_{t=1}^r x_t & =s_2+r+2+\sum\limits_{t=1}^r x_t & \\
 & & (\text{by Lemma \ref{pb1}, for }r \text{ is even}) & (\text{by Lemma \ref{pb1}, for }r \text{ is odd}) & \\
\hline
\end{array}$$}
{\footnotesize $$\begin{array}{|c|c|c|c|c|}
\hline
\text{Edges} & u_{11}v_1 & u_{1(\frac{r}{2}+1)}\alpha_{(\frac{r}{2}+1)1} & u_{2(\frac{r-1}{2})}\beta_{(\frac{r-1}{2})y_{\frac{r-1}{2}}} & \cdots \\
\hline
\text{Labels} & s_2+r+2+y_r+\sum\limits_{t=1}^{r} x_t & s_2+r+3+\sum\limits_{t=1}^{\frac{r}{2}} x_t +\sum\limits_{t=\frac{r}{2}}^r y_t & s_2+r+3+\sum\limits_{t=1}^{\frac{r+1}{2}} x_t +\sum\limits_{t=\frac{r+1}{2}}^r y_t & \cdots \\
 & & =s_2+r+3+y_{\frac{r}{2}}+\sum\limits_{t=1}^r x_t & =s_2+r+3+x_{\frac{r+1}{2}}+\sum\limits_{t=1}^r x_t & \\
 & & =s_2+r+3+y_r+\sum\limits_{t=1}^r x_t & =s_2+r+3+y_r+\sum\limits_{t=1}^r x_t & \\
 & & (\text{by Lemma \ref{pb1} and (\ref{eq2}), for }r \text{ is even}) & (\text{by Lemma \ref{pb1} and (\ref{eq2}), for }r \text{ is odd}) & \\
\hline
\end{array}$$}
{\footnotesize $$\begin{array}{|c|c|c|c|c|}
\hline
\text{Edges} & u_{1i}v_1 & u_{1(\frac{r+i+1}{2})}\alpha_{(\frac{r+i+1}{2})1} & u_{2(\frac{r-i}{2})}\beta_{(\frac{r-i}{2})y_{\frac{r-i}{2}}} & \cdots \\
\hline
\text{Labels} & s_2+r+i+1+\sum\limits_{t=1}^{r} x_t +\sum\limits_{t=r-i+1}^r y_t & s_2+r+i+2+\sum\limits_{t=1}^{\frac{r+i-1}{2}} x_t +\sum\limits_{t=\frac{r-i+1}{2}}^r y_t & s_2+r+i+2+\sum\limits_{t=1}^{\frac{r+i}{2}} x_t +\sum\limits_{t=\frac{r-i}{2}+1}^r y_t & \cdots \\
 & & =s_2+r+i+2+\sum\limits_{t=1}^{r} x_t +\sum\limits_{t=r-i+1}^r y_t & =s_2+r+i+2+\sum\limits_{t=1}^{r} x_t +\sum\limits_{t=r-i+1}^r y_t & \\
 & & (\text{by Lemma \ref{pb1}(ii), for }r+i \text{ is even}) & (\text{by Lemma \ref{pb1}(iv), for }r+i \text{ is odd}) & \\
\hline
\end{array}$$}
{\footnotesize $$\begin{array}{|c|c|c|c|c|}
\hline
\text{Edges} & u_{1r}v_1 & \gamma_{1s_1}v_1 & \cdots & \gamma_{11}v_1 \\
\hline
\text{Labels} & s_2+2r+1+\sum\limits_{t=1}^r (x_t+y_t) & s_2+2r+2+\sum\limits_{t=1}^r (x_t+y_t) & \cdots & s_1+s_2+2r+1+\sum\limits_{t=1}^r (x_t+y_t) =m\\
\hline
\end{array}$$}

Moreover, the last row of the biadjacency matrix $A$ corresponds to the vertex $v_2$ which has a label $k$ and the last column of $A$ corresponds to the vertex $v_1$ which has the maximum label $m$. Thus $G$ has a complete $\alpha$-labeling with a critical number $k$, the vertex $v_1$ has the maximum labeling $m$ and $v_2$ has the labeling $k$
\end{proof}

\begin{figure}[h]
\begin{center}
\includegraphics[scale=0.5]{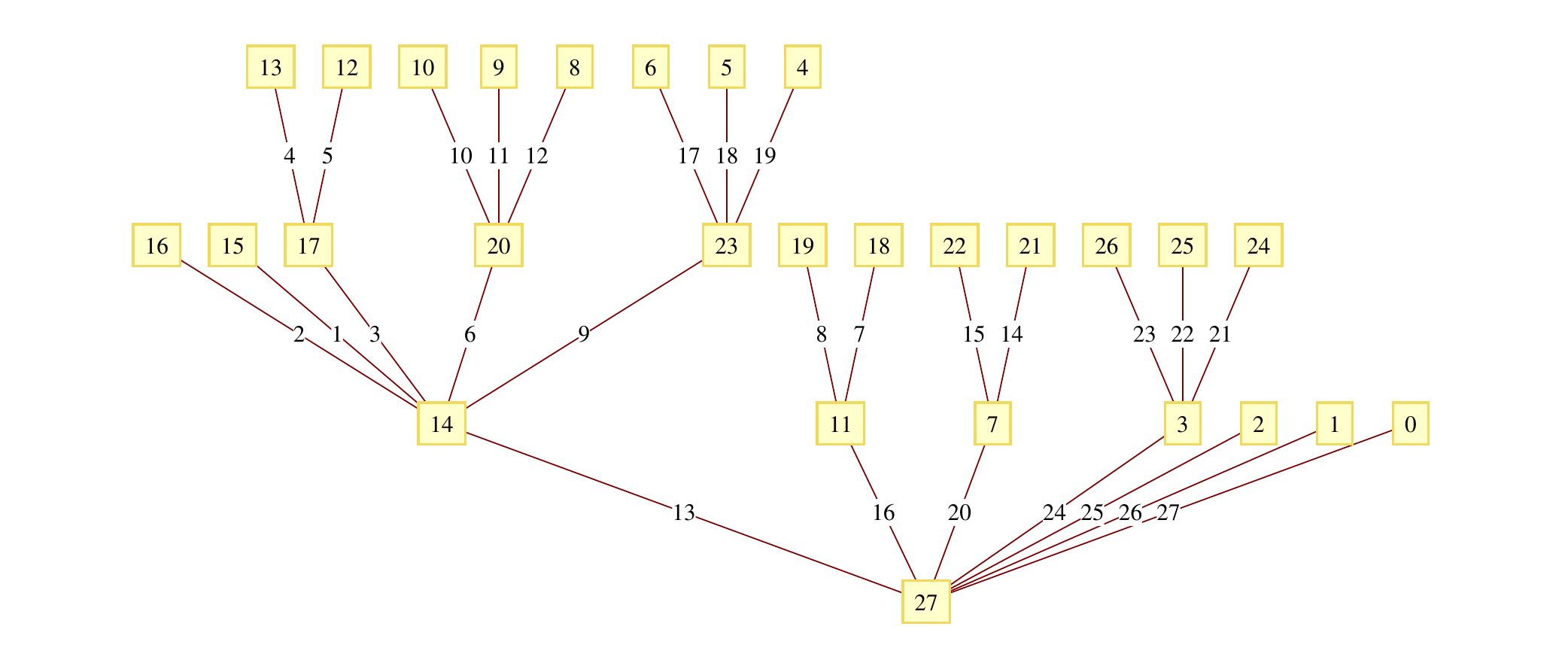}
\caption{The lobster $G$ in Example \ref{pbex} with a complete $\alpha$-labeling}\label{pbl2}
\end{center}
\end{figure}

\begin{exmp}\label{pbex}
Let us consider the lobster $G$ in Figure \ref{pbl} with $r=3$, $x_1=x_2=y_3=2$, $y_1=y_2=x_3=3$, $s_1=3$ and $s_2=2$ (cf. Figure \ref{pbl2}). Then the following is the completely graceful biadjacency matrix $A$ of $G$ as constructed in the proof of Theorem \ref{pb2}.
{\footnotesize $$A\ =\ \begin{array}{cc|ccccccccccccc|}
 & \multicolumn{1}{c}{} & \multicolumn{1}{c}{15} & 16 & 17 & 18 & 19 & 20 & 21 & 22 & 23 & 24 & 25 & 26 & \multicolumn{1}{l}{27}\\
 & \multicolumn{1}{c}{} & \multicolumn{1}{c}{\gamma_{21}} & \gamma_{22} & u_{23} & \alpha_{11} & \alpha_{12} & u_{22} & \alpha_{21} & \alpha_{22} & u_{21} & \alpha_{31} & \alpha_{32} & \alpha_{33} & \multicolumn{1}{l}{v_1}\\
\cline{3-15}
0 & \gamma_{11} & 0 & 0 & 0 & 0 & 0 & 0 & 0 & 0 & 0 & 0 & 0 & 0 & 1 \\
1 & \gamma_{12} & 0 & 0 & 0 & 0 & 0 & 0 & 0 & 0 & 0 & 0 & 0 & 0 & 1 \\
2 & \gamma_{13} & 0 & 0 & 0 & 0 & 0 & 0 & 0 & 0 & 0 & 0 & 0 & 0 & 1 \\
3 & u_{13} & 0 & 0 & 0 & 0 & 0 & 0 & 0 & 0 & 0 & 1 & 1 & 1 & 1 \\
4 & \beta_{11} & 0 & 0 & 0 & 0 & 0 & 0 & 0 & 0 & 1 & 0 & 0 & 0 & 0 \\
5 & \beta_{12} & 0 & 0 & 0 & 0 & 0 & 0 & 0 & 0 & 1 & 0 & 0 & 0 & 0 \\
6 & \beta_{13} & 0 & 0 & 0 & 0 & 0 & 0 & 0 & 0 & 1 & 0 & 0 & 0 & 0 \\
7 & u_{12} & 0 & 0 & 0 & 0 & 0 & 0 & 1 & 1 & 0 & 0 & 0 & 0 & 1 \\
8 & \beta_{21} & 0 & 0 & 0 & 0 & 0 & 1 & 0 & 0 & 0 & 0 & 0 & 0 & 0 \\
9 & \beta_{22} & 0 & 0 & 0 & 0 & 0 & 1 & 0 & 0 & 0 & 0 & 0 & 0 & 0 \\
10 & \beta_{23} & 0 & 0 & 0 & 0 & 0 & 1 & 0 & 0 & 0 & 0 & 0 & 0 & 0 \\
11 & u_{11}& 0 & 0 & 0 & 1 & 1 & 0 & 0 & 0 & 0 & 0 & 0 & 0 & 1 \\
12 & \beta_{31} & 0 & 0 & 1 & 0 & 0 & 0 & 0 & 0 & 0 & 0 & 0 & 0 & 0 \\
13 & \beta_{32} & 0 & 0 & 1 & 0 & 0 & 0 & 0 & 0 & 0 & 0 & 0 & 0 & 0 \\
14 & v_2 & 1 & 1 & 1 & 0 & 0 & 1 & 0 & 0 & 1 & 0 & 0 & 0 & 1 \\
\cline{3-15}
\end{array}$$}
\end{exmp}

\begin{defn}\label{lobpb2}
A lobster $L$ in Figure \ref{lob1}(left) is called {\em pairwise balanced} if $r$ is even and the subtree of $L$ consisting of vertices $v_i$ and $v_{i+1}$ along with lobes $F_i$ and $F_{i+1}$ is balanced for each odd $i\in\set{1,2,\ldots,r-1}$.
\end{defn}

\begin{cor}\label{pbt}
Every pairwise balanced lobster has a complete $\alpha$-labeling.
\end{cor}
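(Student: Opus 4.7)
The plan is to reduce the corollary to a direct application of two earlier results: Theorem \ref{pb2} supplies a complete $\alpha$-labeling on each balanced sub-piece of diameter at most $5$, and Proposition \ref{p3} stitches those labelings together along the spine into a single complete $\alpha$-labeling of $L$.

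First I would partition $L$ along its spine. Since $r$ is even, for each odd $i\in\set{1,2,\ldots,r-1}$ the subtree $P_{(i+1)/2}$ formed by $v_i,v_{i+1}$ together with their lobes $F_i,F_{i+1}$ is, by hypothesis, a balanced lobster of the shape in Figure \ref{pbl}. This gives a vertex-disjoint decomposition of $L$ into $r/2$ balanced pieces $P_1,P_2,\ldots,P_{r/2}$; the spine of $L$ is then recovered by adjoining, for each $j=1,2,\ldots,r/2-1$, the single edge between the ``right'' spinal vertex of $P_j$ (namely $v_{2j}$) and the ``left'' spinal vertex of $P_{j+1}$ (namely $v_{2j+1}$).

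Next I would apply Theorem \ref{pb2} to each $P_j$, choosing $v_{2j-1}$ to play the role of $v_1$ and $v_{2j}$ to play the role of $v_2$ in Figure \ref{pbl}; this orientation is permissible since the balance conditions (\ref{eq1})--(\ref{eq2}) are symmetric in the $x$'s and $y$'s. The theorem then supplies a complete $\alpha$-labeling of $P_j$ with some critical number $k_j$ in which $v_{2j-1}$ carries the maximum label $m_j$ and $v_{2j}$ carries the critical label $k_j$. The gluing edges described above now connect the vertex labeled $k_j$ in $P_j$ to the vertex labeled $m_{j+1}$ in $P_{j+1}$ for every $j$, which is precisely the identification required in the statement of Proposition \ref{p3}. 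Invoking that proposition on the family $\set{P_1,P_2,\ldots,P_{r/2}}$ yields a complete $\alpha$-labeling of $L$, as desired.

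There is no substantive obstacle in this argument: once the pieces are identified and consistently oriented along the spine, the conclusion follows mechanically from Theorem \ref{pb2} and Proposition \ref{p3}. The only point that needs a brief word of justification (and is the mildest subtlety) is that the $v_1/v_2$ labels of Theorem \ref{pb2} can be assigned to either spinal vertex of a balanced piece, so that the ``max-label end'' of $P_{j+1}$ can always be aligned with the ``critical-label end'' of $P_j$.
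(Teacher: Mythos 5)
Your proof is correct and takes essentially the same route as the paper, whose entire proof of Corollary \ref{pbt} is the single line ``The result follows from Proposition \ref{p3} and Theorem \ref{pb2}''; you have simply made explicit the decomposition into $r/2$ balanced pieces and the orientation of each piece. Your closing remark is the one genuine point of care: the symmetry of conditions (\ref{eq1})--(\ref{eq2}) under swapping the roles of $v_1$ and $v_2$ is what lets the maximum-labeled end of each piece face the critical-labeled end of its predecessor, exactly as Proposition \ref{p3} requires.
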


\begin{proof}
The result follows from Proposition \ref{p3} and Theorem \ref{pb2}.
\end{proof}

A very special case of pairwise balanced lobsters is particularly interesting.

\begin{defn}\label{defpb3}
Let $L$ be a pairwise balanced lobster with lobes $F_1,F_2,\ldots,F_r$ at the spinal vertices $v_1,v_2,\ldots,v_r$ respectively such that for each odd $i\in\set{1,2,\ldots,r-1}$, the subtree of $L$ consisting of vertices $v_i$ and $v_{i+1}$ along with lobes $F_i$ and $F_{i+1}$ is trivially balanced (i.e., $F_i$ and $F_{i+1}$ have same numbers of isomorphic (non-pendant) branches). Then $L$ is called {\em pairwise trivially balanced} lobster. 
\end{defn}

\begin{cor}\label{pbt2}
Any pairwise trivially balanced lobster has a complete $\alpha$-labeling.
\end{cor}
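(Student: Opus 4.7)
The plan is to observe that \emph{pairwise trivially balanced} is a strict specialization of \emph{pairwise balanced}, so the statement is an immediate corollary of the preceding Corollary \ref{pbt}; no new construction or labeling argument is needed.

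First, I would unwind the definition chain. Definition \ref{defbal5} already records that whenever $x_i = y_i = t$ for all $i = 1, 2, \ldots, r$, the balancing equations \eqref{eq1} and \eqref{eq2} are automatically satisfied; that is precisely what ``trivially balanced'' means for a single lobster of the form in Figure \ref{pbl}. So every trivially balanced lobster is, a fortiori, balanced.

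Next, by Definition \ref{defpb3}, in a pairwise trivially balanced lobster $L$ the subtree $T_i$ formed by $v_i, v_{i+1}$ together with the lobes $F_i, F_{i+1}$ is trivially balanced for each odd $i \in \{1, 2, \ldots, r-1\}$. By the previous paragraph, each such $T_i$ is then balanced, which matches the hypothesis of Definition \ref{lobpb2}. Consequently $L$ is pairwise balanced.

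Applying Corollary \ref{pbt} now gives that $L$ has a complete $\alpha$-labeling, which is the desired conclusion. The only ``work'' here is the bookkeeping across Definitions \ref{defbal5}, \ref{lobpb2}, and \ref{defpb3}; there is no real obstacle, since Theorem \ref{pb2} (which underlies Corollary \ref{pbt}) already handles the harder non-constant case $x_i \neq y_j$ via Lemma \ref{pb1}, and the trivially balanced case is strictly easier.
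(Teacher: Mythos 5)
Your proposal is correct and matches the paper's (implicit) argument exactly: the paper gives no separate proof of Corollary \ref{pbt2}, since Definition \ref{defpb3} already declares a pairwise trivially balanced lobster to be a pairwise balanced one, so the result is an immediate consequence of Corollary \ref{pbt}. Your unwinding of Definitions \ref{defbal5}, \ref{lobpb2}, and \ref{defpb3} is precisely the bookkeeping the paper leaves to the reader.
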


\begin{figure}[ht]
\begin{center}
\includegraphics[scale=0.56]{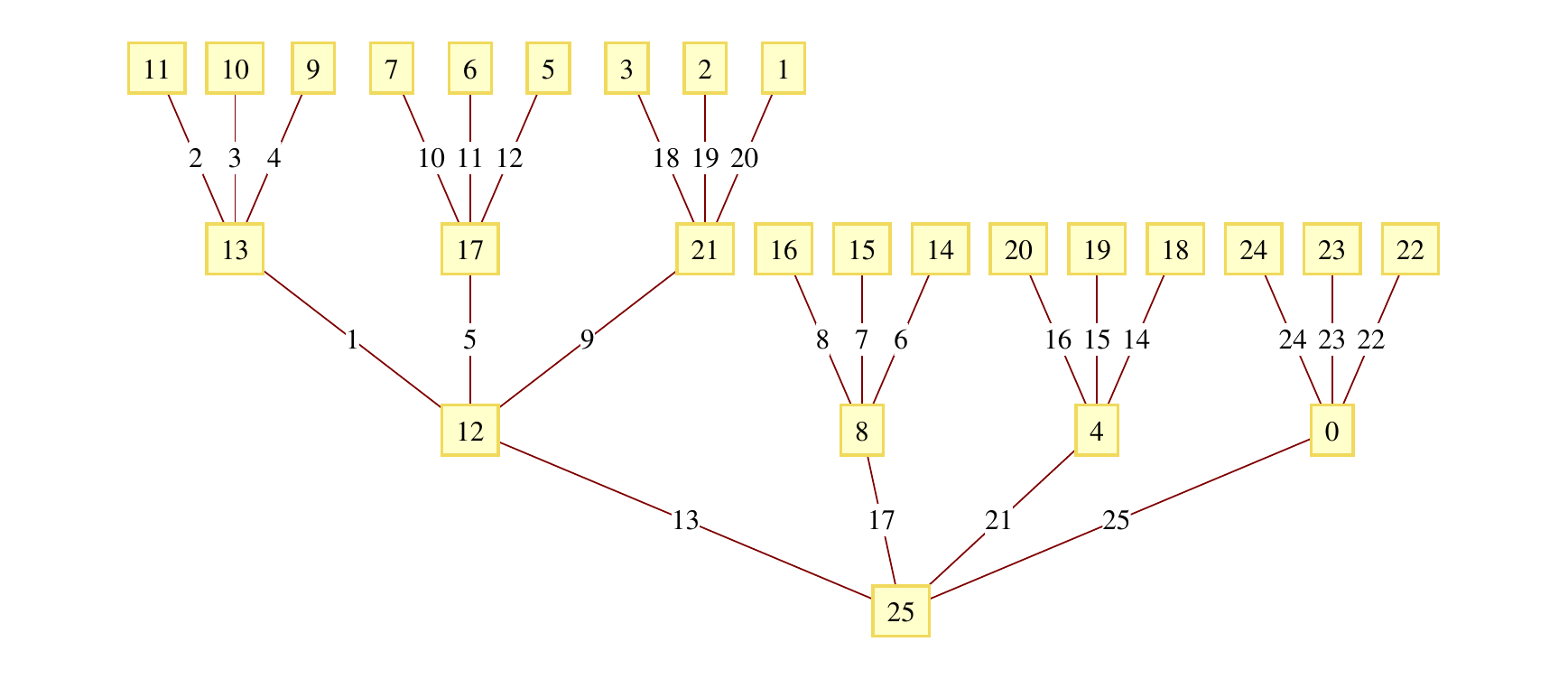}
\caption{The trivially balanced lobster $T$ in Example \ref{newex3}}\label{newexfig3}
\end{center}
\end{figure}

\begin{exmp}\label{newex3}
Consider the trivially balanced lobster $T$ in Figure \ref{newexfig3}. The completely graceful biadjacency matrix $A$ of $T$ is given by 

{\footnotesize $$A\ =\ 
\begin{array}{c|ccccccccccccc|}
\multicolumn{1}{c}{} & 13 & 14 & 15 & 16 & 17 & 18 & 19 & 20 & 21 & 22 & 23 & 24 & \multicolumn{1}{c}{25} \\
\cline{2-14}
0 & 0 & 0 & 0 & 0 & 0 & 0 & 0 & 0 & 0 & 1 & 1 & 1 & 1 \\
1 & 0 & 0 & 0 & 0 & 0 & 0 & 0 & 0 & 1 & 0 & 0 & 0 & 0 \\
2 & 0 & 0 & 0 & 0 & 0 & 0 & 0 & 0 & 1 & 0 & 0 & 0 & 0 \\
3 & 0 & 0 & 0 & 0 & 0 & 0 & 0 & 0 & 1 & 0 & 0 & 0 & 0 \\
4 & 0 & 0 & 0 & 0 & 0 & 1 & 1 & 1 & 0 & 0 & 0 & 0 & 1 \\
5 & 0 & 0 & 0 & 0 & 1 & 0 & 0 & 0 & 0 & 0 & 0 & 0 & 0 \\
6 & 0 & 0 & 0 & 0 & 1 & 0 & 0 & 0 & 0 & 0 & 0 & 0 & 0 \\
7 & 0 & 0 & 0 & 0 & 1 & 0 & 0 & 0 & 0 & 0 & 0 & 0 & 0 \\
8 & 0 & 1 & 1 & 1 & 0 & 0 & 0 & 0 & 0 & 0 & 0 & 0 & 1 \\
9 & 1 & 0 & 0 & 0 & 0 & 0 & 0 & 0 & 0 & 0 & 0 & 0 & 0 \\
10 & 1 & 0 & 0 & 0 & 0 & 0 & 0 & 0 & 0 & 0 & 0 & 0 & 0 \\
11 & 1 & 0 & 0 & 0 & 0 & 0 & 0 & 0 & 0 & 0 & 0 & 0 & 0 \\
12 & 1 & 0 & 0 & 0 & 1 & 0 & 0 & 0 & 1 & 0 & 0 & 0 & 1 \\
\cline{2-14}
\end{array}$$}

The importance of this special case is that by shifting the $1$'s in its completely graceful biadjacency matrix one can obtain many other graceful lobsters which are not pairwise similar, linked or balanced. For example let us shift the following $1$'s in $A$ (keeping the same set of box-values) to obtain a new matrix, say, $A^\prime$. Then $A^\prime$ remains to be a completely graceful and it is the biadjacency matrix of another lobster, say, $T^\prime$ (cf. Figure \ref{newexfig4}) which has also a complete $\alpha$-labeling. But $T^\prime$ is not pairwise similar, linked or balanced.
{\footnotesize $$\begin{array}{|c|c||c|c|}
\hline
\text{Shift of $1$'s} & \text{Change of box-values} & \text{Shift of $1$'s} & \text{Change of box-values}\\
\hline
\set{1,21}\longrightarrow \set{1,17} & 20\longrightarrow 16 & \set{4,20}\longrightarrow \set{0,20} & 16\longrightarrow 20\\
\hline
\set{2,21}\longrightarrow \set{2,17} & 19\longrightarrow 15 & 
\set{4,19}\longrightarrow \set{0,19} & 15\longrightarrow 19\\
\hline
\set{5,17}\longrightarrow \set{5,13} & 12\longrightarrow 8 & 
\set{8,16}\longrightarrow \set{4,16} & 8\longrightarrow 12\\
\hline
\end{array}$$}

{\footnotesize $$A^\prime\ =\ 
\begin{array}{c|ccccccccccccc|}
\multicolumn{1}{c}{} & 13 & 14 & 15 & 16 & 17 & 18 & 19 & 20 & 21 & 22 & 23 & 24 & \multicolumn{1}{c}{25} \\
\cline{2-14}
0 & 0 & 0 & 0 & 0 & 0 & 0 & \mathbf{1} & \mathbf{1} & 0 & 1 & 1 & 1 & 1 \\
1 & 0 & 0 & 0 & 0 & \mathbf{1} & 0 & 0 & \mathbf{0} & 0 & 0 & 0 & 0 & 0 \\
2 & 0 & 0 & 0 & 0 & \mathbf{1} & 0 & 0 & \mathbf{0} & 0 & 0 & 0 & 0 & 0 \\
3 & 0 & 0 & 0 & 0 & 0 & 0 & 0 & 0 & 1 & 0 & 0 & 0 & 0 \\
4 & 0 & 0 & 0 & \mathbf{1} & 0 & 1 & \mathbf{0} & \mathbf{0} & 0 & 0 & 0 & 0 & 1 \\
5 & \mathbf{1} & 0 & 0 & 0 & \mathbf{0} & 0 & 0 & 0 & 0 & 0 & 0 & 0 & 0 \\
6 & 0 & 0 & 0 & 0 & 1 & 0 & 0 & 0 & 0 & 0 & 0 & 0 & 0 \\
7 & 0 & 0 & 0 & 0 & 1 & 0 & 0 & 0 & 0 & 0 & 0 & 0 & 0 \\
8 & 0 & 1 & 1 & \mathbf{0} & 0 & 0 & 0 & 0 & 0 & 0 & 0 & 0 & 1 \\
9 & 1 & 0 & 0 & 0 & 0 & 0 & 0 & 0 & 0 & 0 & 0 & 0 & 0 \\
10 & 1 & 0 & 0 & 0 & 0 & 0 & 0 & 0 & 0 & 0 & 0 & 0 & 0 \\
11 & 1 & 0 & 0 & 0 & 0 & 0 & 0 & 0 & 0 & 0 & 0 & 0 & 0 \\
12 & 1 & 0 & 0 & 0 & 1 & 0 & 0 & 0 & 1 & 0 & 0 & 0 & 1 \\
\cline{2-14}
\end{array}.$$}
\end{exmp}

\vspace{-1em}
\begin{figure}[h]
\begin{center}
\includegraphics[scale=0.6]{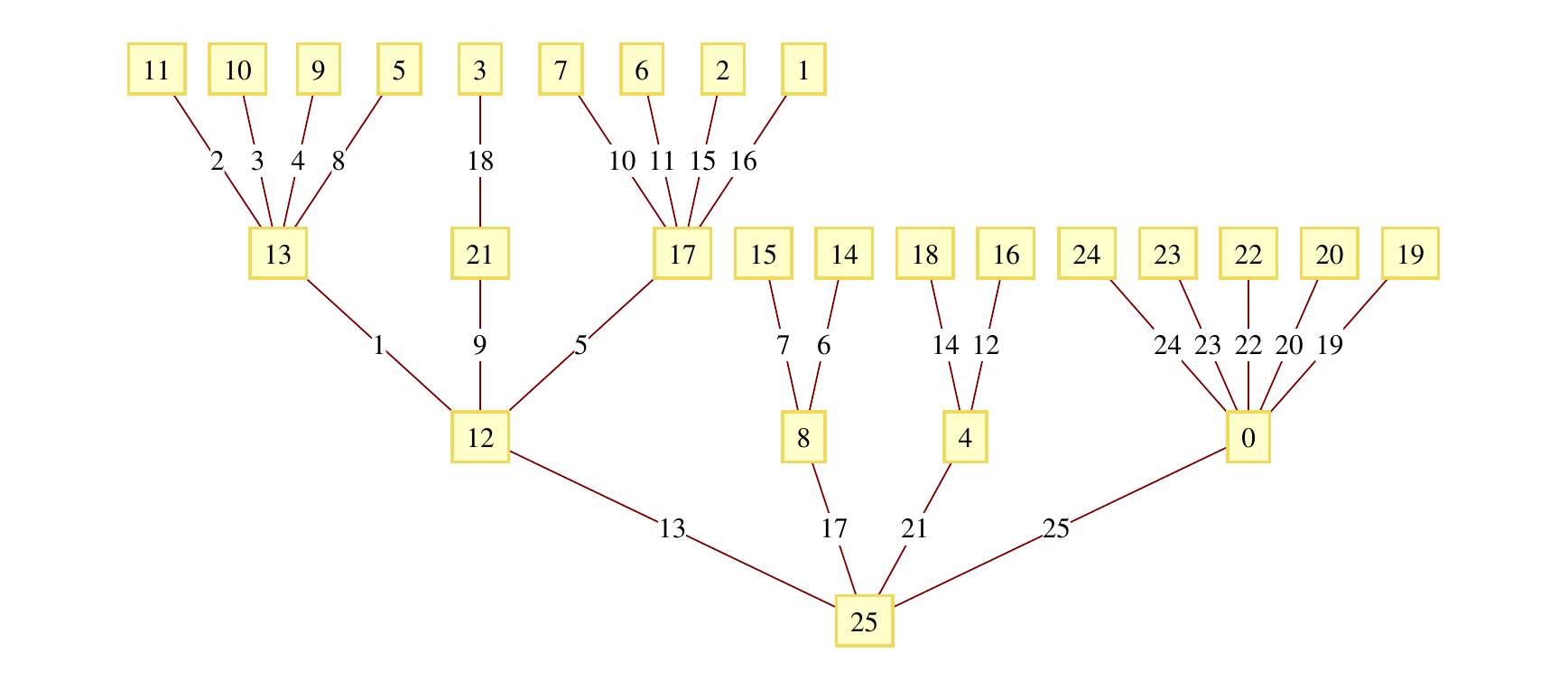}
\caption{The lobster $T^\prime$ in Example \ref{newex3}}\label{newexfig4}
\end{center}
\end{figure}

\section{Conclusion}

The conjecture that all trees are graceful is approaching its $50$ years and the subsequent partial conjecture that all lobsters are graceful is also open for nearly $35$ years. From the above study of finding graceful lobsters the following questions are now becoming important. 

\begin{enumerate}
\item In \cite{HH} it was shown that all trees (lobsters) of diameter $5$ are graceful. Does there exist an $\alpha$-labeling of a lobster $G$ in Figure \ref{pbl} such that the central vertices $v_1$ and $v_2$ are labeled by the critical number and the maximum labeling?

\item Again in \cite{HH} it was shown that any tree of diameter $4$ having a central vertex of an odd degree has a graceful labeling such that the label of the central vertex is maximum. Is there any such graceful labeling for a tree of diameter $4$ having a central vertex of an even degree?
\end{enumerate}

If answers to both the questions are affirmative, then the lobster conjecture can be solved by joining these trees as in Proposition \ref{p3}. If the answer to the second question is yes, then all pairwise similar lobsters become graceful without any restriction. However, in search of more new classes of graceful lobsters, an immediate attention to Example \ref{newex3} will be helpful as it indicates another possible class of graceful lobsters whose pairwise lobes satisfy the property that $r=s$, $\displaystyle{\sum\limits_{t=1}^r x_t=\sum\limits_{t=1}^r y_t}=q$ (say) and $r$ divides $q$.

\bibliographystyle{amsplain}

\begin{thebibliography}{1}

\bibitem{BD}
B.~D. Acharya, K.~A. Germina, K.~L. Princy and S.~B. Rao, \emph{Graph labellings, embedding and {NP}-completeness theorems}, J. Combin. Math. Combin. Comput. \textbf{67} (2008), 163--180.

\bibitem{Ber}
J.~C. Bermond, \emph{Graceful graphs, radio antennae and french windmills},
  Graph Theory and Combinatorics, Pitman, London (1979), 18--37.

\bibitem{Bloom}
G.~S. Bloom, \emph{A chronology of the Ringel-Kotzig conjecture and the continuing quest to 
call all trees graceful. In: Topics in Graph Theory}, F. Harrary (ed.), Ann. New York Acad. Sci., \textbf{328} (1979), 32--55.
  
\bibitem{Gal}
J.~A. Gallian, \emph{A dynamic survey of graph labeling}, The Electronic
  Journal of Combinatorics \textbf{5} (2005), {\#}DS6.

\bibitem{Gol}
S.~W. Golomb, \emph{How to number a graph}, Graph Theory and Computing, R. C.
  Read, ed., Academic Press, New York, 1972.
	
\bibitem{HW}
G. Haggard and P. McWha, \emph{Decomposition of complete graphs into trees}, Czech. Math. J., \textbf{25} (1975), 31--36.

\bibitem{HH}
P. Hrn$\check{\text{c}}$iar and A. Haviar, \emph{All trees of diameter five are graceful}, Disc. Mathematics, \textbf{233} (2001), 133--150.
   
\bibitem{HKR}
C. Huang, A. Kotzig and A. Rosa, \emph{Further results on tree labellings}, Utilitus Mathematica, \textbf{21C} (1982), 31--48.
  
\bibitem{Kot}
A. Kotzig, \emph{On certain vertex-valuations of finite graphs}, Utilitus Mathematica, \textbf{4} (1973), 261--290.

\bibitem{MP}
D.~Mishra and P.~Panigrahi, \emph{Graceful lobsters obtained by component
  moving of diameter four trees}, Ars Combin. \textbf{81} (2006), 129--146.

\bibitem{DM}
D.~Morgan, \emph{All lobsters with perfect matchings are graceful}, Electron.
  Notes Discrete Math \textbf{11} (2002), 6.

\bibitem{Ng}
H.~K. Ng, \emph{Gracefulness of a class of lobsters}, Notices AMS \textbf{7}
  (1986), 825--05--294.
	
\bibitem{Ring}
G.~Ringel, Problem 25, \emph{Theory of Graphs and its Applications}, Proc. of the Symposium held in Smolenice in June 1963, Prague 1964, 162.  

\bibitem{RosaT}
A.~Rosa, \emph{On cyclic decompositions of the complete graphs}, Doctoral Thesis, Bratislava, 1965 (in Slovak).

\bibitem{Rosa}
A.~Rosa, \emph{On certain valuations of the vertices of a graph}, Theory of
  Graphs (Internat. Symposium, Rome, July 1966), Gordan and Breach, N. Y. and
  Dunod Paris, 1967.

\bibitem{SJ}
G.~Sethuraman and J.~Jesintha, \emph{A new class of graceful lobsters}, Proc.
  Inter. Conf. Math. Comput. Sci., March 2007, Loyola College, Chennai-34,
  India, 25--29.
  
\bibitem{Sh}
D.~A.~Sheppard, \emph{The factorial representation of balanced labelled graphs}, Discrete Math \textbf{15} (1976), 379--388.

\bibitem{SZ}
R.~G. Stanton and C.~R. Zarnke, \emph{Labelling of balanced trees} Proc. Fourth S. E. Conf. Combinatorics Graph Theory Comput. Utilitas Math. Publ., Winnipeg, Manitoba, Canada., Congressus Numeratum \textbf{8} (1973), 479--495.

\bibitem{WJL}
J.-G. Wang, D.~J. Jin, X.-G. Lu, and D.~Zhang, \emph{The gracefulness of a
  class of lobster trees}, Math. Comput. Modelling \textbf{20} (1994),
  105--110.

\end{thebibliography}

\end{document}